\newcommand{\email}[1]{\href{mailto:#1}{\nolinkurl{#1}}}
\newtheorem{theorem}{Theorem}[section]
\newtheorem{lemma}[theorem]{Lemma}
\newtheorem{corollary}[theorem]{Corollary}
\newtheorem{proposition}[theorem]{Proposition}
\theoremstyle{plain}{\theorembodyfont{\rmfamily}%
}
\theoremstyle{plain}{\theorembodyfont{\rmfamily}%
}
\theoremstyle{plain}{\theorembodyfont{\rmfamily}%
}
\theoremstyle{plain}{\theorembodyfont{\rmfamily}%
}
\theoremstyle{plain}{\theorembodyfont{\rmfamily}%
\newtheorem{remark}[theorem]{Remark}}
\theoremstyle{plain}{\theorembodyfont{\rmfamily}%
\newtheorem{definition}[theorem]{Definition}}
\theoremstyle{plain}{\theorembodyfont{\rmfamily}%

\usepackage{bm}
\usepackage{subcaption}
\usepackage{float}
\usepackage{cancel}
\usepackage{mathtools}
\usepackage{enumitem}

\usepackage{appendix}
\usepackage{array}
\usepackage{footnote}
\usepackage{booktabs}

\usepackage{autonum}

\definecolor{blue}{HTML}{0000CD}

\definecolor{orange}{HTML}{FF8C00}

\DeclareMathOperator*{\argmin}{argmin}
\DeclareMathOperator*{\lsup}{limsup}
\newcommand{\xsol}{x^\dagger} % solution of the hierarchichal problem
\newcommand{\xtrue}{\bar x} % original desired signal from which we get \bar y
\newcommand{\ytrue}{\bar y} % \bar y = A \bar x
\newcommand{\ynoisy}{\hat{y}}
\newcommand{\DDD}{\textbf{(3D)}~} 
\newcommand{\IDDD}{\textbf{(I3D)}~} 

\newcommand{\RA}{R_A^*}

\numberwithin{equation}{section}

\input{mystyle.sty}

\usepackage{algorithmic}

\newtheorem{example}[theorem]{Example}

\title{\sffamily\huge Accelerated iterative regularization \\ via dual diagonal descent}

\author{Luca Calatroni$^1$,\; Guillaume Garrigos$^2$,\; 
Lorenzo Rosasco$^3$, and Silvia Villa$^4$\\[5mm]
\small
\small $\!^1$Universit\'e C\^ote d'Azur, CNRS, Inria, I3S, France\\
\small \ttfamily{calatroni@i3s.unice.fr}
\\[5mm]
\small $\!^2$ LPSM, Universit\'e de Paris, Sorbonne Universit\'e, CNRS, Paris,  France\\
\small\ttfamily{garrigos@lpsm.paris}
\\[5mm]
\small
\small$\!^3$MaLGa, DIBRIS, Universit\`a di Genova, Via Dodecaneso 35, 16146 Genova, Italy, \\
\small CBMM, and  Massachussetts Institute of Technology, and  Istituto Italiano di Tecnologia\\
\small \ttfamily{lorenzo.rosasco@unige.it}
\\[5mm]
\small
\small $\!^4$MaLGa, DIMA, Universit\`a di Genova, Via Dodecaneso 35, 16146 Genova, Italy\\
\small \ttfamily{silvia.villa@unige.it}
}

\date{~}

\begin{document}

\maketitle

\begin{abstract}
We propose and analyze an accelerated iterative dual diagonal descent algorithm for the solution of linear inverse problems with general regularization and data-fit functions. In particular,  we develop an inertial approach of which we  analyze both convergence and stability. Using tools from inexact proximal calculus, we prove early stopping results with optimal convergence rates for  additive data-fit terms as well as  more general cases, such as the Kullback-Leibler divergence,  for which different type of proximal point approximations hold.
\end{abstract}

{\bfseries Keywords}. 
Iterative regularization, duality, acceleration,  forward-backward splitting, diagonal methods, stability and convergence analysis.

{\bfseries AMS Classification:}
90C25, 49N45, 49N15, 68U10,90C06.

\section{Introduction}  \label{sec:introduction}

We are interested in solving the linear inverse problem:
\begin{equation}   \label{eq:inverse_problem}
\text{find }\quad \xtrue \in \Xx \quad \text{s.t.}\quad A \xtrue = \ytrue,
\end{equation}
where $A: \mathcal{X}\to\mathcal{Y}$ is a bounded linear operator between two Hilbert spaces $\mathcal{X}$ and $\Yy$,
and $ \ytrue \in \mathcal{Y}$ can be seen as  a given  measurement of some unknown $\xtrue \in \Xx$ we want to recover.
In general, the inverse problem \eqref{eq:inverse_problem} is ill-posed as its solution (if it exists) may lack some fundamental properties like uniqueness or stability. 
A standard approach   is assuming the desired $\xtrue$ to be well-approximated by
 \cite{EngHanNeu96,BenBur18}:
\begin{equation}\label{eq:primal}
   \tag*{\textbf{($\bm{P_0(\ytrue)}$)}} \text{find\ }
      \xsol \in  \argmin  \left\{ R(x) \text{ s.t. } x\in\underset{x' \in \Xx}{\argmin}~ \ell(Ax';  \ytrue) \right\}, 
\end{equation} 
called the primal problem in the following.  Here,  $R$ is a  regularization function enforcing some \emph{a-priori} knowledge on  the desired solution $\xtrue$, while $\ell: \Yy^2\to\R\cup \{+\infty\}$ is a data-fit function. In practice, the  data  is  subject to noise  due to, e.g., possible transmission and/or acquisition problems. As a consequence, we only have access to an inexact version $\ynoisy$ of $\ytrue$. 
Using $\ynoisy$ in  \ref{eq:primal} does no longer provide a suitable solution and a 
standard approach  is Tikhonov regularization:
\begin{equation}\label{eq:primal penalized}
    \tag*{\textbf{($\bm{P_\lambda(\ynoisy)}$)}}
    \text{find\ } \hat{x}_\lambda \in  \argmin_{x\in \mathcal{X}}~ \left\{ p_\lambda(x):= R(x) + \frac{1}{\lambda}\ell(Ax;\ynoisy) \right\}.
\end{equation} 
Intuitively, the (so called) regularization parameter $\lambda>0$ balances the trust in the data $\ynoisy$ with the regularization effect enforced by $R$.  Finding a good approximate solution based on noisy data requires two  steps. First, problem  \eqref{eq:primal penalized} is solved for various choices of $\lambda$ by means of a suitable optimization algorithm.
Second, the computed solutions are compared using  some  validation criterion (e.g. discrepancy principles \cite{EngHanNeu96}, SURE \cite{Ste81,DelVaiFadPey14}, cross-validation \cite{SteChr08}) and  an optimal parameter $\lambda^*$ is computed along with the corresponding  solution $\hat{x}_{\lambda^*}$.
The above procedure  is in general very costly computationally and so called iterative regularization methods, that we study in this paper,  can provide accurate  and more efficient alternatives \cite{EngHanNeu96,BakKok05,KalNeuSch08}.

\paragraph{Iterative regularization}

Iterative regularization approaches find an approximation of $\xsol$  by running an iterative algorithm  and `stopping' it when close  to $\xsol$, \cite{BacBur09,BurOsh13,BurResHe07,BotHei12}.
The number of  iterations plays  the role of the regularization parameter, controlling at the same time accuracy and computations. Roughly speaking, an iterative regularization method can be described as follows: 
\begin{enumerate}
	\item \label{item:req1} Given any data $y \in \Yy$, the algorithm generates a sequence $(x_k(y))_{k\in\N}$  using   $R$, $\ell$ and $A$. 
	\item \label{item:req2} Given the true data $\ytrue$, the sequence $(x_k(\bar{y}))_\kin$ solves the hierarchical optimization problem \ref{eq:primal}, i.e.   converges  as $k\to\infty$ to the solution $x^\dagger$ of \ref{eq:primal}.
	\item \label{item:req3}  For a given  noise level $\delta >0$ and  noisy data $\hat{y}$ such that $\|\bar{y}-\hat{y}\|\leq \delta$, there is a \emph{stopping time} $k(\delta) \in \N$ such that 
	\begin{equation}\label{D:regularisation method abstract}
	  \Vert \hat{x}_{k(\delta)}(\hat y) -  x^\dagger \Vert = O(\delta^\alpha), \text{ for some } \alpha >0.  
	\end{equation}	
\end{enumerate}
The quantity $O(\delta^\alpha)$ is often called the \textit{rate} of the considered  regularization method,  and the exponent $\alpha$ quantifies its efficiency.

\paragraph{Previous results}
 For quadratic data-fit terms $\ell$ and square-norm regularization terms $R$, both Tikhonov and iterative regularization (such as the Landweber algorithm)  have been shown to be  \emph{optimal}, in the sense that their reconstruction error \eqref{D:regularisation method abstract} has optimal rate $O(\delta^{\frac{1}{2}})$ \cite{EngHanNeu96}.  
 Optimal results with possibly fewer iterations are also known to be possible considering accelerated approaches \cite{EngHanNeu96,Neubauer2016}. For quadratic data-fit, and general strongly convex regularizers  an iterative regularization procedure combined with a Morozov-type discrepancy principle was also shown to be optimal in \cite{BurResHe07}, and accelerated approaches based on  a dual accelerated gradient descent was shown to be optimal with  less iterations in \cite{MatRosVilVu17}. Iterative regularization methods have been studied also for convex regularizers in \cite{BurOsh13} where estimates  in terms of Bregman distance were proved (see, e.g., \cite{BurResHe07,BacBur09}  for Tikhonov-type approaches), but no explicit rates in the form \eqref{D:regularisation method abstract} were shown. More general iterative algorithms defined in Banach spaces have been studied  in \cite{Kaltenbacher_2009,Kaltenbacher_2013,Estatico2013} for linear and non-linear inverse problems and in \cite{BotHei12} for $L^1$ and Total Variation (TV) regularization. 
For results on iterative regularization for data-fit terms other than squared norm, we mention  \cite{BenningBurger2011} for results in the framework of Bregman distances and \cite{Garrigos2017} where a dual diagonal descent \DDD algorithm is considered. To the best of our knowledge, accelerated iterative regularization approaches have not been studied in this general setting.

\paragraph{Contribution and organization of the paper} In this paper,  we study a novel accelerated iterative regularization algorithm for  strongly convex regularization terms and general data-fits and prove its optimality. As for the quadratic case \cite{Neubauer2016,MatRosVilVu17}, we show that  acceleration can lead to optimal results in much fewer iterations. 
Our approach, dubbed \IDDD, extends the  \DDD iterative algorithm studied in \cite{Garrigos2017} introducing an inertial term  yielding acceleration.
From an optimization perspective, the rationale behind these results is that inertial dynamics are able to exploit information in previous iterates to converge faster to an optimal solution. However, as pointed out in \cite{Devolder2014}, differently from basic schemes, inertial methods  suffer from errors accumulation that need to be controlled along the iterations and balanced with the improvements observed in the convergence speed.\\
The  paper is organized as follows.
In Section \ref{sec:post-introduction} we introduce the main notations  and assumptions. In Section \ref{sec:dynamicall} we introduce and analyze a diagonal inertial dynamic in continuous time.  In Section \ref{sec:algorithm}, we derive \IDDD as a discretization of the continuous dynamic, and we study its convergence and stability properties.  In Section \ref{sec:stability} the results are illustrated for a number of specific data-fit terms including Kullback-Leibler divergence data-fit.

\section{Main assumptions and background on  diagonal methods}\label{sec:post-introduction}
We begin fixing the  notation. Let $\Hh$ be a Hilbert space with scalar product  $\langle\cdot,\cdot\rangle$ and associated norm $\|\cdot\|$. Given $y\in\Hh$, and $\varrho\in\mathbb{R}_+$ and $\mathbb{B}(y,\varrho)$ is the open ball of center $y$ and radius $\varrho$.
we denote by $\Gamma_0(\Hh)$ the set of proper, convex and lower semi-continuous functions from $\Hh$ to $]-\infty,+\infty]$.
We say that  $f \in \Gamma_0(\Hh)$ is $\sigma$-{\em strongly convex}  if 
$f-{\sigma}\Vert \cdot \Vert^2/2 \in \Gamma_0(\Hh)$, with  $\sigma \in\left]0,+\infty\right[$.
We recall that the {\em subdifferential} of  $f \in \Gamma_0(\Hh)$  is the multi-valued operator 
$\partial f : \Hh \rightarrow 2^{\Hh}$ defined by setting
\begin{equation}  \label{def:subdiff}
(\forall x\in\Hh)\quad \partial f(x) := \left\{u \in \Hh: f(x') - f(x) - \langle u , x' - x \rangle \geq 0 \text{ for all } x' \in \Hh \right\}.
\end{equation}
Note that if $f$ is Gateaux differentiable at $x \in \Hh$, then $\partial f(x)=\{\nabla f(x) \}$, see, e.g.~, \cite[Proposition 17.31 i)]{BauschkeCombettes2017}.
We also recall that for all  $x\in \Hh$ and $\tau>0$ the proximity operator $\prox_{\tau f} : \Hh \rightarrow \Hh$  of $f\in\Gamma_0(\Hh)$ with parameter $\tau$ is defined by:
\begin{equation}\label{def:proximal operator}
    \prox_{\tau f}(x) =
    (I + \partial f)^{-1}(x) = 
    \argmin\limits_{x' \in \Hh} \left\{ f(x') + \frac{1}{2\tau}\Vert x' - x \Vert^2 \right\}.
\end{equation}
Given $f \in \Gamma_0(\Hh)$, we will denote by $f^* : \Hh \rightarrow [-\infty,+\infty]$   the {\em Fenchel conjugate} of $f$
\[
(\forall u\in\Hh)\quad f^*(u):=  \sup_{x\in \Hh}  \left\{\langle u,x\rangle_{\Hh} -f(x) \right\}.
\]
The Fenchel conjugate of $f$ belongs to $\Gamma_0(\Hh)$, and if $f$ is $\sigma$-strongly convex then $f^*$ is differentiable at any point, with a $\sigma^{-1}$-Lipschitz continuous gradient, see, e.g. \cite[Theorem 18.15]{BauschkeCombettes2017}. 
Furthermore, the following property holds, see \cite[Theorem 16.23]{BauschkeCombettes2017}: 
\[
(\forall (x,u)\in \Hh^2)\quad u \in \partial f(x) \Leftrightarrow x \in \partial f^*(u).
\] 
Finally, given two real sequences $(a_k)_{k\geq 1}$ and $(b_k)_{k\geq 1}$, we will write $a_k = O(b_k)$ whenever there exists a positive constant $M>0$ such that $a_k \leq M b_k$ for all $k \geq 1$. We will use the more precise notation $a_k = \Theta(b_k)$ if both conditions $a_k = O(b_k)$ and $b_k = O(a_k)$ hold.
Note also that we will use the same notation  $\|\cdot\|$ and $\langle\cdot,\cdot\rangle$ for the norm and the scalar product in every Hilbert space we consider. 

\subsection{Main assumptions}
We make   the following assumptions on the data-fit $\ell$ and the regularizer $R$: 
\begin{enumerate}[label=\textbf{$(L_\arabic*)$}]
	\item \label{assumption:data-fit convex and coercive}
	For all $y \in \Yy$, $\ell_y:=\ell(\cdot,y)\in \Gamma_0(\Yy) $ and it is coercive.
	\item \label{assumption:data-fit not flat} 
	For all $(y_1,y_2)\in \Yy^2, ~\ell(y_1,y_2)=0 \iff y_1=y_2$.
	\item \label{assumption:data-fit conditionned locally}
	 there exists
     $q\in[1,+\infty[$, $\varrho \in ]0,+\infty]$, $ \gamma \in ]0,+\infty[$ such that
	$$
	\forall y \in \mathbb{B}( \ytrue,\varrho),\quad \frac{\gamma}{q}\Vert y-\ytrue\Vert^q \leq \ell(y,\ytrue). 
	$$ 
\end{enumerate}
\begin{enumerate}[label=\textbf{$(R_\arabic*)$}]	
	\item \label{H:R strongly convex} $R$ is $\sigma$-strongly convex, with $\sigma \in \left]0,+\infty\right[$,  
	\item \label{assumption:AR2}   $\partial R(\xsol) \cap \im A^* \neq \emptyset$.	
\end{enumerate}
If assumption \ref{assumption:data-fit conditionned locally} is satisfied, we say that $\ell_{\ytrue}$ is \emph{locally
	$q$-conditioned} at $\bar{y}$. If \ref{assumption:data-fit conditionned locally} is satisfied with $\rho=+\infty$, then we say that $\ell_{\ytrue}$  is $q$-conditioned at $\bar{y}$.
These assumptions  cover a wide range of inverse problems, as discussed next.

\begin{definition}\label{D:additive datafit}
 A data-fit is additive if there exists $\mathcal{N} \in \Gamma_0(\Yy)$ such that
\begin{equation}
 (\forall (y_1,y_2)\in\Yy^2) \quad  \ell(y_1,y_2) = \mathcal{N}(y_1-y_2).
\end{equation}
\end{definition}

\begin{example}[Data-fit functions]\label{R:data-fit function list}
For $\Yy=\R^d$, the additive data-fit functions defined by the functions $\mathcal{N}$ below trivially satisfy \ref{assumption:data-fit convex and coercive}-\ref{assumption:data-fit not flat}. In addition, $\ell_{\bar{y}}$ satisfies \ref{assumption:data-fit conditionned locally} if and only if $\mathcal{N}$ is locally conditioned at $0$, and this is the case, as we show below:
\begin{enumerate}[label=$\bullet$]
    \item $\mathcal{N}(y) = \frac{1}{2}\Vert y \Vert^2$ is $2$-conditioned at $0$, with $\gamma =1$.
    \item $\mathcal{N}(y) = \frac{1}{q}\Vert y \Vert_q^q$, for $q\geq 1$, is $q$-conditioned at $0$ with  $\gamma = d^{r}$, where $r:=\min (\frac{1}{q} - \frac{1}{2}, 0)$. Note that this includes the case of the $\ell^1$-norm.
    \item the weighted sum \cite{HinLan13} $\mathcal{N}(y) = \alpha\Vert y \Vert_1 + \frac{1}{2}\Vert y \Vert_2^2$, for $\alpha >0$, is $1$-conditioned at $0$, with $\gamma=\alpha$.
    \item the Huber data-fit function \cite{CalDeSch17} $\mathcal{N}(y) = \sum\limits_{i=1}^d h_\nu(y^{i})$, where $h_\nu \colon \R\to\R_{+}$ is the Huber smoothing function, defined for $\nu > 0$ as
\begin{equation}
\label{e:huber}
(\forall t\in \R)\quad h_\nu(t):=
\begin{cases}
\frac{1}{2 \nu }t^2  & \text{ if } \vert t \vert \leq \nu \\
 \vert t \vert -\frac{\nu}{2}& \text{ otherwise.}
\end{cases}
\end{equation}
The Huber data-fit is locally $2$-conditioned at $0$. It is enough to choose $\varrho \in \left]0,+\infty\right[$ with 
$\gamma = \min\{{1}/{\nu},{2 \varrho - \nu}/{\varrho^2}\}$. 
\item the exact penalization $\Nn(y)=0$ if $y=0$, $\Nn(y)=+\infty$ otherwise, is $1$-conditioned with $\gamma =1$.
\end{enumerate}

We also mention a non-additive data-fit function used in several applications:
\begin{enumerate}[label=$\bullet$]
    \item the Kullback-Leibler divergence, defined as:
    \begin{equation}  \label{definition:KL}
    \ell(y_2,y_1)=\mathrm{KL}(y_1,y_2):=\sum_{i=1}^d \mathrm{kl}(y_1^{i},y_2^{i}),
    \end{equation}
    where
    \[
    (\forall (t_1,t_2)\in \R^2)\quad
    \mathrm{kl}(t_1,t_2):=\begin{cases} \displaystyle t_{1}\log\frac{t_{1}}{t_2}-t_{1}+t_2 &\text{if }  (t_1,t_2) \in\left ]0,+\infty\right[^{2}, \\[2ex] 
    +\infty &\text{otherwise.}\end{cases}
    \]
    The Kullback-Leibler divergence is locally $2$-conditioned at $\bar{y}$ for every $\varrho \in ]0, +\infty[$, with $\gamma = \frac{2}{\varrho c^2} + \frac{2}{\varrho^2 c}\ln{\frac{c}{\varrho + c}}$, with $c = d \Vert  \bar{y} \Vert_\infty$ (see Lemma \ref{P:conditioning of KL}).
\end{enumerate}
\end{example}

\begin{example}[Regularizers]\label{E:regularizers list}
A regularizer widely used in signal/image processing as a sparsifying prior is the $\ell^1$-norm of the coefficients 
with respect to an orthonormal basis, or a more general dictionary. 
Another popular choice in imaging is the total variation semi-norm 
\cite{RudOshFat92}, due to its ability to preserve edges, together with its generalizations \cite{BreKunPoc10,ChaLio97}.
For some specific tasks in computer vision and machine learning, there is a need for structured sparsity. 
This kind of prior can be enforced with the use of group sparsity inducing norms \cite{YuaLin06,Bac09}. While not being strongly convex, 
these regularizers can be included in our framework
by adding a strongly convex term $\frac{\sigma}{2}\Vert \cdot \Vert^2$ where  $\sigma$ is small positive parameter,  in the flavor of the elastic net regularization \cite{ZouHas05}. 
\end{example}

\subsection{Iterative methods based on continuous and discrete dynamics}

It is useful to  review some  approaches to solve \eqref{eq:inverse_problem}, the hierarchical problem \ref{eq:primal} and the Tikhonov-regularized
problem \ref{eq:primal penalized}. In  particular, we  focus on approaches based on duality and/or combined with diagonal dynamics.

\paragraph{Mirror descent  approaches}
A class of methods to solve \eqref{eq:inverse_problem}  consider the   problem 
\begin{equation}   \label{eq:primal_char}
    \text{find }  \xsol \in  \argmin_{x \in \Xx}  \left\{ R(x) + \delta_{\bar{y}}(Ax) \right\},
\end{equation}
where the constraint \eqref{eq:inverse_problem} is encoded by the indicator function $\delta_{\bar{y}}$. Using Fenchel-Rockafeller duality,  
we derive the corresponding dual problem:
\begin{equation}   \label{eq:Dual}
\text{find } u^\dagger \quad\text{ s.t. }\quad u^\dagger \in \argmin_{u\in \mathcal{Y}}\  \left\{ d_0(u) := R^*(-A^*u) + \langle \bar{y}, u \rangle \right\}
    \tag*{\textbf{($\bm{D_0}$)}}.
\end{equation}
Since $R^*$ is smooth (see \ref{L:functional properties:L at 0} in Lemma \ref{L:functional properties}), a gradient  method can  be  used to solve \ref{eq:Dual}, see \cite{MatRosVilVu17}. This coincides, up to a change of variables, with mirror descent approaches \cite{BeckTeboulleMirror2003} and linearized Bregman iterations \cite{BurResHe07,BacBur09},  where $R$ plays the role of the mirror function.  How to extend this  approach to solve \ref{eq:primal} is not clear. 

\paragraph{Primal diagonal dynamics}

A classical approach to solve hierarchical problems like \ref{eq:primal} is the \textit{diagonal principle}, based on the fact that when $\ynoisy = \bar y$ and $\lambda\to 0$, problem \ref{eq:primal penalized} converges towards \ref{eq:primal} in an appropriate sense \cite[Theorem 2.6]{Att96}.  In this view, diagonal approaches have  been considered as non-autonomous  dynamics solving \ref{eq:primal penalized} with a parameter $\lambda$ monotonically decreasing to zero.
The simplest example of a continuous diagonal dynamic is the diagonal steepest descent differential inclusion with initial $t_0>0$ defined by
\begin{equation}
    \label{ODE:dynamic primal diagonal first order}
    x(t_0)=x_0, \quad \lambda(t) \searrow 0, \quad 
    \dot{x}(t) + \partial p_{\lambda(t)}(x(t)) \ni 0,
   \tag*{\textbf{($\bm{PD_\lambda)}$}}
\end{equation}
where $p_{\lambda(t)}(x(t))$ is defined in \ref{eq:primal penalized}. This dynamic is studied in \cite{AttCom96,AttCza10,AttCabCza18} where
convergence of $x(t)$ to $x^\dagger$ was guaranteed provided that $\lambda(t)\to 0$ \textit{fast enough}, namely $\lambda \in L^{1/(q-1)}([t_0,+\infty))$, where $q\in[1,+\infty)$ is the  exponent  in \ref{assumption:data-fit conditionned locally}, see
\cite[Corollary 3.3, Remark 4.4]{AttCabCza18}.
Discrete counterparts of \ref{ODE:dynamic primal diagonal first order} have also been studied  \cite{BahLem94,AttCzaPey11,CzaNouPey16}.  They can be seen as a
variant of the Forward-Backward algorithm applied to solve problem \ref{eq:primal penalized}, where the penalization parameter tends to zero along the 
iterations. A  main drawback of this type of algorithms is that they are expansive for non-smooth data-fit terms, since they require to compute the proximal
operator of $\ell_{\ytrue} \circ A$. A possible way to overcome this issue  is applying Fenchel-Rockafellar duality to \ref{eq:primal penalized}.  The main  advantage of considering the dual problem \ref{eq:DualReg} is that the linear operator appears there only in composition with the smooth function $R^*$. Then it is possible to apply an explicit gradient step to $R^* \circ (-A^*)$, while the non-smooth data-fit term can be cheaply treated via its proximal operator.

\paragraph{Dual diagonal dynamics}

The dual problem of \ref{eq:primal penalized} is
\begin{equation}   \label{eq:DualReg}
\text{find } u_\lambda \in \argmin_{u\in \mathcal{Y}}\ \left\{d_\lambda(u) := R^*(-A^*u) + \frac{1}{\lambda}\ell^*(\lambda u;\ynoisy)\right\}.   \tag*{\textbf{($\bm{D_\lambda}$)}}
\end{equation} 
Solutions of \ref{eq:DualReg} are related  to those of \ref{eq:primal penalized} via the formula $x_\lambda = \nabla R^*(-A^* u_\lambda)$, which holds thanks to the strong convexity of $R$. A natural question is whether the diagonal principle can be applied on to the dual problem \ref{eq:DualReg} as well. The corresponding dual diagonal continuous dynamics read
\begin{equation}  \label{ODE:dynamic dual diagonal first order}
u(t_0)=u_0, \quad \lambda(t) \searrow 0, \quad 
\begin{cases}
x(t) = \nabla R^*(-A^*u(t)), \\
\dot{u}(t) 
+\partial d_{\lambda(t)}(u(t)) 
\ni0.
\end{cases},
\tag*{\textbf{($\bm{DD_\lambda}$)}}
\end{equation}
where, similarly as before, provided that $\lambda \in L^{1/(q-1)}([t_0,+\infty))$, the trajectory $x(t)$ is guaranteed to converge to $x^\dagger$.
The  discrete counterpart of \ref{ODE:dynamic dual diagonal first order} is called Dual Diagonal Descent algorithm \DDD, and its convergence and stability properties are studied in \cite{Garrigos2017}.  For $\ynoisy\in\Yy$ such that $\Vert \hat y - \bar y \Vert \leq \delta$ and  additive data-fit functions, the authors showed that stopping the algorithm at  $k_\delta =\Theta(k^{-2/3})$ guarantees that \eqref{D:regularisation method abstract} holds with $\alpha = 1/3$. However, that this rate is not optimal  \cite{EngHanNeu96}.  As we show in the following, in this paper our approach recover the optimal rate, while providing the benefits of accelerated approaches, namely an earlier stopping time.

\section{Continuous inertial dual diagonal dynamic}  \label{sec:dynamicall}
First-order inertial algorithms are popular in optimization due to their faster convergence on smooth and non-smooth convex problems, see e.g. \cite{Nes04,BecTeb09}. In several papers  continuous inertial dynamics have been studied considering appropriate Lyapunov functions \cite{Candes2016,KriBayBar15,ApiAujDos17}.
As  already noted, their regularization properties are also known for quadratic data-fit terms \cite{Neubauer2016,MatRosVilVu17}. 

Next, we propose an inertial approach for general data-fit terms, introducing a variant of the dynamic in  \ref{ODE:dynamic dual diagonal first order}. 
For a given $\alpha>0$ and initial $t_0>0$, let 
\begin{equation}  \label{ODE:dynamic}
(u(t_0),\dot{u}(t_0))=(u_0,\dot{u}_0), \quad \lambda(t) \searrow 0, \quad
\begin{cases}
x(t) = \nabla R^*(-A^* u(t)), \\
\ddot{u}(t) 
+ \dfrac{\alpha}{t}\dot{u}(t)
+\partial d_{\lambda(t)}(u(t)) 
\ni 0.
\end{cases}   
\tag*{\textbf{($\bm{IDD_\lambda}$)}}
\end{equation}
The asymptotic behavior of the trajectories of this  inertial differential inclusion will be analyzed next,  while its discrete counterpart will be studied in the rest of the paper.
We first add one remark. 
\begin{remark}
The idea of coupling  inertia with Tikhonov regularization is not new.
In \cite{AttChbRia18} an inertial variant of the primal dynamic \ref{ODE:dynamic primal diagonal first order}  is proposed for  $R=\Vert \cdot \Vert^2/2$. The corresponding inertial primal diagonal approach is:
\begin{equation}  \label{ODE:dynamic primal diagonal second order}
(x(t_0),\dot{x}(t_0))=(x_0,\dot{x}_0), \quad \lambda(t)\searrow 0, \quad 
\ddot{x}(t) + \frac{\alpha}{t} \dot{x}(t) + \lambda(t) \partial p_{\lambda(t)}(x(t)) \ni 0.
   \tag*{\textbf{($\bm{IPD_\lambda}$)}}
\end{equation}
Under a suitable decay assumption on $\lambda(\cdot)$ the authors  guarantee  fast convergence  and  regularization  \cite[Section 6]{AttChbRia18}.
Compared to \ref{ODE:dynamic primal diagonal second order}, in our dual formulation \ref{ODE:dynamic} we  take advantage of a different scaling between the data-fit and the regularizer. Indeed, in \ref{ODE:dynamic} the data-fit is  multiplied by $\lambda(t)^{-1} \to + \infty$, while in \ref{ODE:dynamic primal diagonal second order} the regularizer is multiplied by $\lambda(t) \to 0$. For first-order systems this difference is essentially cosmetic, the two approaches  being equivalent, for  an appropriate change of variables \cite{AttCza10}.
However, for second-order systems these two scalings describe different dynamics \cite[Section 4]{AttCza17}. This difference can be understood  looking at  the limits (in the $\Gamma$-convergence sense) of the corresponding parametrized functions, indeed,
\begin{equation}
    \text{ if } \lambda\searrow 0, \quad p_\lambda \to p_0 := R + \delta_{\argmin \ell_y \circ A} \quad \text{ and } \quad \lambda p_\lambda \to \delta_{\dom R} + \ell_y \circ A.
\end{equation}
\end{remark}

\subsection{Convergence of the continuous inertial dual diagonal dynamic}

Next, we study the convergence properties of the trajectories of \ref{ODE:dynamic}, assuming their existence
to simplify the analysis.
We  remark that if $d_\lambda$ is assumed to be gradient-Lipschitz continuous, global existence and uniqueness results of a classical $C^2([t_0,+\infty),\R_+)$ solution to \ref{ODE:dynamic} hold from the Cauchy-Lipschitz theorem.
However, this assumption requires  the data-fidelity function $\ell_{\ytrue}$ to be strongly convex (see \cite[Theorem 18.15]{BauschkeCombettes2017}), 
which is in general not the case for most of the data-fit terms, see Example \ref{R:data-fit function list}.
We refer to \cite{CabPao07,ApiAujDos17} for further details.
In the following Theorem, we show that the inertial term  \ref{ODE:dynamic} ensures that the dual function values $d_{\lambda(t)}(u(t))$ tend to $\inf d_0$ at a $O(t^{-2})$ rate as expected for inertial methods. Further, switching from the dual to the  primal problem by means of $x(t) = \nabla R^*(-A^* u(t))$, we   prove the convergence of  $x(\cdot)$  to $x^\dagger$.  To prove  these results, assumptions on the decay of  $\lambda(\cdot)$ are needed, as it is usual for dynamics such as those in \ref{ODE:dynamic primal diagonal first order} and \ref{ODE:dynamic primal diagonal second order}.
{ For instance, in \cite{AttCza10,AttCabCza18}, it is shown  that the trajectories of \ref{ODE:dynamic primal diagonal first order} converge under the condition $\lambda\in L^{\frac{1}{q-1}}([t_0,+\infty))$ \cite{AttCabCza18}.}
Similarly, we consider the following assumption:
\begin{enumerate}[label=\textbf{$(\Lambda)$}]
\item \label{requirement:lambda1:cont} $\lambda: \left[0,+\infty\right[ \to \left]0,+\infty\right[$ is a non-increasing differentiable function such that $\lim_{t\to\infty} \lambda(t) = 0$. 
Moreover, if $q$ defined in assumption \ref{assumption:data-fit conditionned locally} is strictly greater than $1$,  the quantity $\Lambda_c = \int_{t_0}^{+\infty} t \lambda^{\frac{1}{q-1}}(t)~dt$ is finite.
\end{enumerate}
\begin{remark}  \label{remark:summability:cont}
A sufficient condition ensuring the validity of \ref{requirement:lambda1:cont} is that $\lambda(\cdot)\in L^{\frac{1}{2(q-1)}}([t_0,+\infty))$, see Lemma \ref{lemma:integrability} in the Appendix. 
\end{remark}
We are now ready to state the main convergence result for continuous dynamics. Note that Lemma~\ref{L:functional properties}\ref{L:functional properties:argmin d_0 nonempty} ensures that the set of solutions of problem \ref{eq:Dual} is nonempty. To prove  fast convergence results of the dual function values, we follow the approach considered in \cite{Attouch2015,Candes2016,ApiAujDos17} and define a suitable Lyapunov-type function.
\begin{theorem}\label{T:CV continuous dynamic rates}
Let the assumptions \ref{assumption:data-fit convex and coercive}-\ref{assumption:data-fit conditionned locally}, \ref{H:R strongly convex}-\ref{assumption:AR2}, \ref{requirement:lambda1:cont} hold true. Let $u^\dagger\in\argmin d_0$ and assume that  $\lambda(t_0) \Vert u^\dagger  \Vert \leq \gamma \varrho^{q-1}/q$.
Let $\alpha \geq 3$ and let the pair $(x(\cdot),u(\cdot))$ be a solution to \ref{ODE:dynamic} in the following sense:
\begin{enumerate}[label=$\bullet$]
    \item $u \in \mathcal{C}^1([t_0, +\infty[, \Yy)$, and $x = \nabla R^* \circ (-A^*) \circ u$,
    \item for every ~$T > t_0$, $\dot u$ and $d_{\lambda(\cdot)} \circ u$ are absolutely continuous on $[t_0,T]$,
    \item for a.e. $t \in [t_0, + \infty[$, $-\ddot{u}(t) - \frac{\alpha}{t} \dot u(t) \in \partial d_{\lambda(t)}(u(t))$.
\end{enumerate}
Then, there exists an explicit  $C\in\left]0,+\infty\right[$ such that
\[
\forall t > t_0 \quad d_{\lambda(t)}(u(t)) - \inf d_0 \leq \frac{C}{t^2}\quad\text{ and}\quad \|x(t) -x^\dagger\| \leq \frac{\sqrt{2C}}{\sqrt{\sigma} t}.
\]
\end{theorem}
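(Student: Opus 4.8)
The plan is to track an asymptotically‑vanishing‑damping Lyapunov energy adapted to the moving objective $d_{\lambda(t)}$. Fix $u^\dagger\in\argmin d_0$, set $\Ww(t):=d_{\lambda(t)}(u(t))-\inf d_0$, and define
\[
\Ee(t):=t^2\,\Ww(t)+\tfrac12\big\|(\alpha-1)(u(t)-u^\dagger)+t\,\dot u(t)\big\|^2 .
\]
Two structural facts will be used, both from $d_\lambda=R^*\circ(-A^*)+(\tfrac1\lambda\ell_{\ytrue})^*$ and $d_0=R^*\circ(-A^*)+\langle\ytrue,\cdot\rangle$ (cf.\ Lemma~\ref{L:functional properties}). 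First, $(\tfrac1\lambda\ell_{\ytrue})^*(u)=\langle\ytrue,u\rangle+\sup_y\{\langle u,y-\ytrue\rangle-\tfrac1\lambda\ell_{\ytrue}(y)\}\ge\langle\ytrue,u\rangle$, taking $y=\ytrue$ and using $\ell_{\ytrue}(\ytrue)=0$ from \ref{assumption:data-fit not flat}; hence $d_{\lambda}\ge d_0$ pointwise, so $\Ww(t)\ge d_0(u(t))-\inf d_0\ge0$. Second, $\lambda\mapsto\tfrac1\lambda\ell_{\ytrue}$ is non‑increasing, so $\lambda\mapsto d_\lambda(u)$ is non‑decreasing for each fixed $u$.

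I then differentiate $\Ee$. Writing $v(t)$ for the vector inside the norm and using $\ddot u(t)=-\tfrac\alpha t\dot u(t)-\xi(t)$ with $\xi(t)\in\partial d_{\lambda(t)}(u(t))$, one finds $\dot v(t)=-t\,\xi(t)$, whence $\tfrac{d}{dt}\tfrac12\|v\|^2=-t(\alpha-1)\langle\xi,u-u^\dagger\rangle-t^2\langle\xi,\dot u\rangle$. The chain rule along the absolutely continuous curve gives $\tfrac{d}{dt}d_{\lambda(t)}(u(t))=\langle\xi,\dot u\rangle+e(t)$, where $e(t)$ is the $\lambda$‑partial derivative times $\dot\lambda(t)$; by the monotonicity above and $\dot\lambda\le0$ we have $e\le0$. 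The $t^2\langle\xi,\dot u\rangle$ terms cancel, and after the subgradient inequality $\langle\xi,u-u^\dagger\rangle\ge d_{\lambda(t)}(u)-d_{\lambda(t)}(u^\dagger)$ together with $d_{\lambda(t)}(u^\dagger)=\inf d_0+\Delta(t)$, $\Delta(t):=d_{\lambda(t)}(u^\dagger)-d_0(u^\dagger)\ge0$, I obtain
\[
\dot\Ee(t)\le t(3-\alpha)\,\Ww(t)+t(\alpha-1)\,\Delta(t)+t^2e(t).
\]
Since $\alpha\ge3$, $\Ww\ge0$ and $e\le0$, the first and third terms are non‑positive, leaving $\dot\Ee(t)\le t(\alpha-1)\Delta(t)$.

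It remains to bound $\Delta(t)=\sup_y\{\langle u^\dagger,y-\ytrue\rangle-\tfrac1{\lambda(t)}\ell_{\ytrue}(y)\}$. By the local conditioning \ref{assumption:data-fit conditionned locally}, $\langle u^\dagger,y-\ytrue\rangle-\tfrac1\lambda\ell_{\ytrue}(y)\le\|u^\dagger\|\,\|y-\ytrue\|-\tfrac{\gamma}{q\lambda}\|y-\ytrue\|^q$, whose maximum in the radial variable is $\tfrac{q-1}{q}\gamma^{-1/(q-1)}\|u^\dagger\|^{q/(q-1)}\lambda^{1/(q-1)}$; the smallness hypothesis $\lambda(t_0)\|u^\dagger\|\le\gamma\varrho^{q-1}/q$ is precisely what keeps the maximizer inside $\B(\ytrue,\varrho)$ so the local bound applies for all $t\ge t_0$ (for $q=1$ the same computation gives $\Delta\equiv0$). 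Thus $t(\alpha-1)\Delta(t)$ is integrable on $[t_0,+\infty[$ by \ref{requirement:lambda1:cont} (finiteness of $\Lambda_c$), and integrating yields $\Ee(t)\le\Ee(t_0)+(\alpha-1)\tfrac{q-1}{q}\gamma^{-1/(q-1)}\|u^\dagger\|^{q/(q-1)}\Lambda_c=:C$, an explicit constant. Since $\Ee(t)\ge t^2\Ww(t)$, the first assertion follows. For the rate on $x$, strong convexity of $R$ makes $R^*$ convex with $\sigma^{-1}$‑Lipschitz gradient, so the cocoercivity inequality for smooth convex functions gives $D_{R^*}(p,p')\ge\tfrac\sigma2\|\nabla R^*(p)-\nabla R^*(p')\|^2$; with $p=-A^*u(t)$, $p'=-A^*u^\dagger$ and $g:=R^*\circ(-A^*)$ this reads $D_g(u(t),u^\dagger)\ge\tfrac\sigma2\|x(t)-x^\dagger\|^2$. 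As the linear part of $d_0$ has zero Bregman divergence and $\nabla d_0(u^\dagger)=0$, one has $D_g(u(t),u^\dagger)=d_0(u(t))-\inf d_0\le\Ww(t)$, so $\tfrac\sigma2\|x(t)-x^\dagger\|^2\le\Ww(t)\le C/t^2$, giving $\|x(t)-x^\dagger\|\le\sqrt{2C}/(\sqrt\sigma\,t)$.

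The main obstacle is the differentiation of the block $d_{\lambda(t)}(u(t))$: since $d_\lambda$ is merely convex and generally nonsmooth (as $\ell_{\ytrue}$ need not be strongly convex), the chain rule must be justified along the absolutely continuous curve $u(\cdot)$ while simultaneously isolating the explicit $\lambda$‑dependence and checking that its variation carries the favorable sign. The regularity built into the solution concept — absolute continuity of $\dot u$ and of $d_{\lambda(\cdot)}\circ u$ — is exactly what licenses this step; the only other delicate point is the localization in the estimate of $\Delta(t)$, which is governed by the smallness condition on $\lambda(t_0)$.
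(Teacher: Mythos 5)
Your proposal follows essentially the same route as the paper's proof: the identical Lyapunov energy $\Ee$, the same differentiation (via $\dot v=-t\xi$ and the subgradient inequality) leading to $\dot\Ee(t)\le t(3-\alpha)\Ww(t)+t(\alpha-1)\Delta(t)+t^2e(t)$, the same integration against the summability in \ref{requirement:lambda1:cont}, and the same dual-to-primal passage. The only structural difference is that the paper outsources the auxiliary facts (monotonicity of $\lambda\mapsto d_\lambda$, the residual bound on $d_\lambda(u^\dagger)-\inf d_0$, and the primal estimate) to Lemma~\ref{L:functional properties}, whereas you re-derive them inline; your inline arguments for the monotonicity, the sign of the $\dot\lambda$-term, and the primal rate via the Bregman/cocoercivity inequality are correct.

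There is, however, one genuine (though fixable) gap, in the bound on $\Delta(t)=\sup_{y\in\Yy}\{\langle u^\dagger,y-\ytrue\rangle-\tfrac1{\lambda(t)}\ell_{\ytrue}(y)\}$. You apply the conditioning inequality of \ref{assumption:data-fit conditionned locally}, which is valid only for $y\in\B(\ytrue,\varrho)$, while the supremum ranges over all of $\Yy$; observing that the maximizer of the \emph{minorized, in-ball} problem lies inside $\B(\ytrue,\varrho)$ says nothing about the values of the true supremand outside the ball, where no lower bound on $\ell_{\ytrue}$ has been established (a priori the sup over the complement could exceed your bound, or even be infinite). The missing ingredient is convexity of $\ell_{\ytrue}$ from \ref{assumption:data-fit convex and coercive}: along any ray from $\ytrue$, convexity combined with the local bound yields the global minorant $\ell_{\ytrue}(y)\ge\tfrac{\gamma}{q}\varrho^{q-1}\Vert y-\ytrue\Vert$ for $\Vert y-\ytrue\Vert>\varrho$, and then the hypothesis $\lambda(t_0)\Vert u^\dagger\Vert\le\gamma\varrho^{q-1}/q$ makes the supremand non-positive outside the ball, so the supremum is controlled by the in-ball computation after all. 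This is precisely what Lemma~\ref{L:functional properties}\ref{L:functional properties:diagonal rates via conditioning} accomplishes through the global minorant $\psi$, its convexification $m$, and the conjugate $m^*$, which is finite exactly when $\lambda\Vert u^\dagger\Vert\le\gamma\varrho^{q-1}/q$. Note in particular that the role of the smallness hypothesis is not merely to ``keep the maximizer inside the ball,'' as you state, but to control the supremand at infinity; once this point is patched, your constant $C$ agrees with the paper's.
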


\begin{proof}
Define the following energy:
\begin{equation}  \label{def:Lyapunov}
(\forall t\geq t_0)\quad\mathcal{E}(t):= t^2 \left( d_{\lambda(t)} (u(t)) - \inf  d_0\right) +\frac{1}{2} \|(\alpha-1)( u(t) - u^\dagger) + t\dot{u}(t)\|^2.
\end{equation}
From now on we will use the following shorthand notation: 
\begin{equation} \label{eq:composite}
   \RA: = R^* \circ (-A^*),\qquad \ell^*_{\bar{y}}(\cdot): = \ell(\cdot,\bar{y})^*
\end{equation}
so that the composite dual function $d_\lambda$ can be written as $d_{\lambda}(u) =  \RA(u) + \lambda^{-1} \ell^*_{\bar{y}}(\lambda u)$, for every $u\in\mathcal{Y}$.
Since $\partial d_{\lambda(t)}(u(t)) = \nabla \RA(u(t)) + \partial \ell^*_{\bar y }(\lambda(t) u(t))$ \cite[Proposition 16.6 and Corollary 16.53]{BauschkeCombettes2017}, the notion of solution introduced entails that there exists some $\eta : [t_0,+\infty) \rightarrow \Yy$ such that
\[
    \text{ for a.e. $t>t_0$},\quad  \ddot{u}(t) + \frac{\alpha}{t}\dot{u}(t) + \nabla \RA(u(t)) +\eta(t) =0 \ \text{ and } \ \eta(t) \in \partial \ell^*_{\bar y }(\lambda(t) u(t)).
\]
We divide the proof in two steps.

\paragraph{Step 1. Fast convergence rates} 
The function $\mathcal{E}$ is  differentiable a.e. on $[t_0,+\infty[$ since it is absolutely continuous.
We thus compute its derivative and obtain: 
\begin{align}
\dot{\mathcal{E}}(t) & = 2t \Big(  d_{\lambda(t)} (u(t)) - \inf  d_0 \Big) + \frac{t^2\dot{\lambda}(t)}{\lambda^2(t)}\Big( \langle \eta(t),\lambda(t)u(t) \rangle - \ell^*_{\bar{y}}(\lambda(t)u(t)) \Big) \notag \\
& + t^2 \langle \dot{u}(t), \ddot{u}(t) + \frac{\alpha}{t}\dot{u}(t) + \nabla \RA(u(t))+\eta(t) \rangle + t(\alpha-1) \langle u(t)- u^\dagger, \frac{\alpha}{t}\dot{u}(t) + \ddot{u}(t) \rangle. \notag
\end{align}
The second term in the expression above is non-positive because $\lambda$ is differentiable and decreasing and, moreover, by convexity of $\ell_{\bar{y}}(\cdot)$ together with Lemma \ref{L:functional properties}\ref{L:functional properties:L at 0}, there holds
$$
  \ell^*_{\bar{y}}(\lambda(t)u(t))-\langle  \eta(t),\lambda(t)u(t) \rangle \leq  \ell^*_{\bar{y}}(0)=0.
$$ 
Furthermore, the third term is equal to zero a.e. since $u(\cdot)$ is a solution of \ref{ODE:dynamic} by assumption. 
We  thus  deduce that for a.e. $t>t_0$
\begin{equation}\label{proof:ccd1}
\dot{\mathcal{E}}(t)  \leq 2t \Big(  d_{\lambda(t)} (u(t)) - \inf  d_0 \Big) +t(\alpha-1)\langle u^\dagger - u(t), -\ddot{u}(t) -\frac{\alpha}{t}\dot{u}(t)  \rangle.
\end{equation}
Using that $-\ddot{u}(t) -\frac{\alpha}{t}\dot{u}(t) \in \partial d_{\lambda(t)}(u(t))$ and from the convexity of $d_{\lambda(t)}(\cdot)$ we have:
\begin{equation}
\text{for a.e. } t>t_0 \quad \langle u^\dagger-u(t), -\ddot{u}(t) -\frac{\alpha}{t}\dot{u}(t) \rangle \leq d_{\lambda(t)}(u^\dagger)- d_{\lambda(t)}(u(t)).   
\end{equation}
We now add and subtract $ \inf  d_0 = d_0(u^\dagger)$ and define  $r_{\lambda(t)}(u^\dagger):= d_{\lambda(t)}(u^\dagger) - \inf  d_0$. 
We get:
\begin{equation}
\langle u^\dagger-u(t), -\ddot{u}(t) -\frac{\alpha}{t}\dot{u}(t) \rangle \leq  r_{\lambda(t)}(u^\dagger) + \inf  d_0 -d_{\lambda(t)}(u(t)) .    
\end{equation}
Applying this inequality to \eqref{proof:ccd1}, since $\alpha\geq 3$ and $d_{\lambda(t)} (u(t)) - \inf  d_0 \geq 0$ {(see Proposition \ref{L:functional properties}.\ref{L:functional properties:decreasing function sequence})}, we get:
\begin{equation}\label{proof:ccd2}
   \dot{\mathcal{E}}(t) \leq  t(3-\alpha)\Big(  d_{\lambda(t)} (u(t)) - \inf  d_0 \Big)+ t(\alpha-1)r_{\lambda(t)}(u^\dagger) \leq t(\alpha-1)r_{\lambda(t)}(u^\dagger) . 
\end{equation}
To bound the right hand side, we now apply Lemma \ref{L:functional properties}\ref{L:functional properties:diagonal rates via conditioning} and deduce that, 
since $\lambda(t)\leq\lambda(t_0)$:
$$
\dot{\mathcal{E}}(t) \leq c(\alpha-1)  t\lambda(t)^{\frac{1}{q-1}},
$$
where the constant $c$ is defined as:
\begin{equation}\label{proof:ccd3}
    c :=
    \begin{cases}
    0 \quad &\quad \text{ if }q=1,\\
    (1-(1/q))\gamma^{-1/(q-1)}\Vert u^\dagger \Vert^{q/(q-1)} \quad &\quad \text{ if } q >1,
    \end{cases}
\end{equation}
and it is finite in both cases.
Since the above inequality holds for a.e. $t > t_0$, assumption \ref{requirement:lambda1:cont} yields that for a.e. $t > t_0$:
$$
\mathcal{E}(t) = \mathcal{E}(t_0) + \int_{t_0}^{t} \dot{\mathcal{E}}(t) \leq \mathcal{E}(t_0) + c(\alpha-1) \Lambda_c.
$$
Defining $C:= \mathcal{E}(t_0)+ c(\alpha-1) \Lambda_c$, we derive
\begin{equation}   \label{eq:rates}
 d_{\lambda(t)}(u(t)) - \inf   d_0 \leq \frac{C}{t^2}.
\end{equation}

\paragraph{Step 2. Convergence rate for the primal iterates} 

From \eqref{eq:rates} in combination to Lemma \ref{L:functional properties}\ref{L:functional properties:from dual value to primal iterates}, we get
\begin{eqnarray*}
\frac{\sigma}{2}\| x(t) - x^\dagger\|^2 & \leq &  d_0(u(t)) - \inf   d_0 = (   d_0(u(t)) -    d_{\lambda(t)}(u(t)) ) + (   d_{\lambda(t)}(u(t))  - \inf  d_0 ) \\
& \leq & (  d_0(u(t)) -    d_{\lambda(t)}(u(t)) ) + \frac{C}{t^2}.
\end{eqnarray*}
The monotonicity property of Lemma \ref{L:functional properties}\ref{L:functional properties:decreasing function sequence} implies that the first term on the right hand side above is non-positive, whence we get
\begin{equation}  \label{eq:trajectories}
\|x(t) - x^\dagger\| \leq \frac{\sqrt{2C}}{\sqrt{\sigma} t}.
\end{equation}
\end{proof}

\section{Inertial Dual Diagonal Descent \IDDD Algorithm}  \label{sec:algorithm}

In this section,  we study the convergence properties of the discrete analogue of \ref{ODE:dynamic}, deriving an accelerated version of the \DDD algorithm  in \cite{Garrigos2017}.

\subsection{From the continuous dynamic to the discrete algorithm}

We follow a standard approach  of computiong the time-discretization of the  continuous dynamical system \cite{Alvarez2000,AttouchPeypRed2014,Candes2016,Attouch2015}.
Recalling the notation  in \eqref{eq:composite}, we note  
that \ref{ODE:dynamic} can be equivalently written as
\begin{equation} \label{eq:ODE_composite1}
\begin{cases}
x(t) = \nabla R^*(-A^* u(t)), \\
\ddot{u}(t) + \frac{\alpha}{t}\dot{u}(t) + \partial \ell^*_{\bar{y}}(\lambda(t)u(t)) + \nabla  \RA(u(t)) \ni 0.
\end{cases}
\end{equation}
We  discretize \eqref{eq:ODE_composite1} \emph{explicitly}  with respect to the smooth component $\nabla  \RA$ and  \emph{semi-implicitly} with respect to the non-smooth one $\partial \ell^*_{\bar{y}}$. In other words, we discretize implicitly the trajectories, while leaving explicit the dependence on the discretized values $\lambda_k$. For $k\geq 0$, a fixed time step-size $h>0$ and for some time discretization points $t_k=kh$, we set $u_k:=u(t_k)$,  $\lambda_k:=\lambda(t_k)$ and derive the  finite difference scheme:
\[
\begin{cases}
    x_k = \nabla R^*(-A^*u_k), \\
    \frac{1}{h^2}(u_{k+1} -2u_k+u_{k-1})+\frac{\alpha}{kh^2}(u_k-u_{k-1}) + \partial \ell^*_{\bar{y}}(\lambda_{k}u_{k+1}) + \nabla  \RA(w_k) \ni 0,
\end{cases}
\]
where $w_k$ is a linear combination of $u_k$ and $u_{k-1}$, made clear  in the following. 
After straightforward calculations, we rewrite the system above as
\begin{equation}   \label{eq:ID1_FD}
\begin{cases}
    x_k = \nabla R^*(-A^* u_k), \\
    u_{k+1} + h^2\partial \ell^*_{\bar{y}}(\lambda_{k}u_{k+1})  \ni u_k + \left(1-\frac{\alpha}{k}\right) ( u_k - u_{k-1}) - h^2\nabla \RA(w_k).
\end{cases}
\end{equation}
Hence, by setting $\alpha_k = 1-\alpha/k$, $\tau:=h^2$ and $w_k :=  u_k +\alpha_k ( u_k - u_{k-1})$,
we get
\[
\begin{cases}
 w_k & = u_k + \alpha_k(u_k-u_{k-1}), \\
 u_{k+1} &
 = \left(I + \frac{\tau}{\lambda_{k}}\partial \ell^*_{\bar{y}}(\lambda_{k}\cdot)\right)^{-1}\left(w_k - \tau\nabla \RA(w_k)\right),\\
    x_{k+1} & = \nabla R^*(-A^* u_{k+1}). 
\end{cases}
\]
Note that the proximal operator of the map $\ell^*_{\bar{y}}(\lambda_{k}\cdot)$ with parameter $\tau/\lambda_k$ appears, in combination with an explicit gradient step for $R_A^*$. We can thus introduce the  Inertial Dual Diagonal Descent \IDDD algorithm
\begin{equation}\label{alg:fw_accel}
u_0 = u_1 \in \Yy, \quad \text{ compute for } k \geq 1 \quad 
\begin{cases}
 w_k & = u_k + \alpha_k(u_k-u_{k-1}), \\
 u_{k+1} &= \mathrm{prox}_{\frac{\tau}{\lambda_{k}} \ell^*_{\bar{y}}(\lambda_k\cdot)}\left(w_k - \tau\nabla \RA(w_k)\right),\\
    x_{k+1} & = \nabla R^*(-A^* u_{k+1}). 
\end{cases}
\tag*{\textbf{($\bm{I3D}$)}}
\end{equation}
This algorithm depends on three parameters: the stepsize $\tau >0$, the relaxation parameters $(\lambda_k)_k$ and the friction parameters $(\alpha_k)_k$.
The stepsize will be chosen depending on the value of the Lipschitz constant of $\nabla \RA$.
For the relaxation parameters, we will consider a discrete analogue of the assumption \ref{requirement:lambda1:cont} formulated in the continuous setting.
For the friction parameters $\alpha_k$, we will allow more general values not necessarily corresponding to $\alpha_k = 1 -\alpha/k$ as  above. We gather  the requirements on the   parameters in the following assumptions:
\begin{enumerate}[label=\textbf{$(P_\arabic*)$}]
	\item \label{assumption:parameter:stepsize}
	$\tau \in \left(0,\frac{\sigma^2}{\Vert A \Vert^2}\right]$, where $\sigma > 0$ is defined in assumption \ref{H:R strongly convex}.
	\item \label{assumption:parameter:friction}
	$\alpha_k$ is non-negative and for every $k\geq 1$, $t_k:=1+\sum_{i=k}^{+\infty}\prod_{j=k}^i \alpha_j$ is finite,  with $t_k = \Theta(k)$.
	\item \label{assumption:parameter:relaxation}
	$(\lambda_k)$ is a strictly positive non-increasing sequence such that $\lim_{k\to\infty} \lambda_k = 0$. 
	Moreover, by defining
	\begin{equation}   \label{def:biglambda}
	\Lambda:=
	\begin{cases}
	\sum_{k\geq 1} t_{k+1} \lambda_k^{1/(q-1)}
	& \text{ if } q>1, \\
	0
	& \text{ if } q=1, 
	\end{cases}
	\end{equation}
	we have that $\Lambda< + \infty$.
	\item \label{assumption:parameter:technical}
	For some $u^\dagger \in \argmin d_0$, we have  $\lambda_0 \Vert u^\dagger \Vert \leq \gamma \varrho^{q-1} /q$.
\end{enumerate}

\begin{remark}[On  assumption \ref{assumption:parameter:relaxation}]\label{R:on eq:cond:summability}
As commented in Remark \ref{remark:summability:cont}, one can  check that a sufficient condition for \ref{assumption:parameter:relaxation} to hold is that $\lambda\in \ell^{\frac{1}{2(q-1)}}(\N)$. 
In particular, if we consider a sequence  verifying $\lambda_k = O\left(k^{-\theta}\right)$ for some $\theta > 0$,
it is easy to verify that \ref{assumption:parameter:relaxation} holds as long as $\theta > 2(q-1)$.
For  $q=1$,  (for instance if $\ell(y_1,y_2) = \Vert y_1-y_2\Vert_1$), no summability condition is required.
Roughly speaking, the assumption $\lambda\in \ell^{\frac{1}{2(q-1)}}(\N)$ means that  $\lambda\in \ell^{\infty}(\N)$, which 
is already implied by $\lim_{k\to\infty} \lambda_k = 0$.
\end{remark}

\begin{remark}[On  assumption \ref{assumption:parameter:technical}]\label{R:on eq:cond:technical}
For many choices of data-fits, $\varrho = +\infty$ (see Example \ref{R:data-fit function list}), in which case the assumption is automatically satisfied.
Also, note  that in assumption \ref{assumption:parameter:relaxation}, we require $\lambda_k$ to tend to zero. This means that $\lambda_K \Vert u^\dagger \Vert \leq \gamma \varrho^{q-1} /q$ for some $K \in \N$. In this case, up to a time rescaling $k \leftarrow k+K$  our estimates hold true.
\end{remark}
 Following \cite{AttouchTikAccel}, we require the sequence of friction parameters $(\alpha_k)$ to satisfy  \ref{assumption:parameter:friction},  a particular summability property  guaranteeing a technical condition crucial in the following. We summarize such a requirement and the resulting condition in the following lemma.

\begin{lemma}[ {{\cite[Lemma 2.1]{AttouchTikAccel} }}]   \label{lemma:sequence}
Assume that $(\alpha_k)$ is non-negative and satisfies
\begin{equation}  \label{eq:choice:alpha}
\sum_{i=k}^{+\infty}\prod_{j=k}^i \alpha_j<+\infty,\quad\text{for every }k\geq 1.
\end{equation}
Then, the sequence defined by
\begin{equation} \label{eq:choice:tk}
t_k:=1+\sum_{i=k}^{+\infty}\prod_{j=k}^i \alpha_j
\end{equation}
is well-defined, and satisfies for every $k\geq 1$ the following properties:
\begin{equation}   \label{eq:properties_sequences_alphak_tk}
1+\alpha_k t_{k+1} = t_k, \qquad t^2_{k+1}-t^2_k\leq t_{k+1}.
\end{equation}
\end{lemma}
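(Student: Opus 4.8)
The plan is to verify the three assertions in turn, the first two being essentially bookkeeping and the third carrying the real content. For well-definedness, I would simply observe that the series defining $t_k$ in \eqref{eq:choice:tk} is exactly the nonnegative series assumed summable in \eqref{eq:choice:alpha}; hence each $t_k$ is a finite real number, and since every summand is nonnegative, $t_k \geq 1$, so in particular $t_{k+1} > 0$. This positivity is all that will be needed to divide by $t_{k+1}$ later.

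For the recursion $1 + \alpha_k t_{k+1} = t_k$, I would factor $\alpha_k$ out of the tail of the series. Peeling off the $i=k$ term, which equals $\prod_{j=k}^{k}\alpha_j = \alpha_k$, and using $\prod_{j=k}^{i}\alpha_j = \alpha_k\prod_{j=k+1}^{i}\alpha_j$ for $i\ge k+1$, one obtains
\[
t_k = 1 + \alpha_k + \alpha_k\sum_{i\ge k+1}\prod_{j=k+1}^{i}\alpha_j = 1 + \alpha_k\Big(1 + \sum_{i\ge k+1}\prod_{j=k+1}^{i}\alpha_j\Big) = 1 + \alpha_k\, t_{k+1},
\]
since the bracketed quantity is, after reindexing, precisely $t_{k+1}$. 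The absolute convergence already established justifies the rearrangement.

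The inequality $t^2_{k+1}-t^2_k\leq t_{k+1}$ in \eqref{eq:properties_sequences_alphak_tk} is the main obstacle. I would rewrite it equivalently as $t_{k+1}^2 - t_{k+1}\le t_k^2$ and then substitute the recursion just proved, so that with the shorthand $s:=t_{k+1}\ge 1$ and $a:=\alpha_k\ge 0$ the claim collapses to the scalar inequality
\[
(1-a^2)\,s^2 - (2a+1)\,s - 1 \le 0.
\]
When $a\ge 1$ the leading coefficient is nonpositive and the left-hand side is manifestly negative, so this case is immediate. The genuine case is $0\le a<1$, where the quadratic opens upward with a single positive root $s_+=\big((2a+1)+\sqrt{4a+5}\big)/\big(2(1-a^2)\big)$, and the inequality amounts to $t_{k+1}\le s_+(\alpha_k)$.

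Confirming this last bound is where I expect the difficulty to concentrate, since it is an upper estimate on $t_{k+1}$ in terms of $\alpha_k$ that must be extracted from the growth regime of the admissible friction sequences rather than from mere summability: one wants the $\alpha_k$ to approach $1$ fast enough that $t_k=\Theta(k)$, as required in \ref{assumption:parameter:friction}. Concretely, for the extremal Nesterov-type choice $\alpha_k = 1-\alpha/k$ one has $t_k\sim k/(\alpha-1)$, and inserting this into the scalar inequality produces a residual proportional to $k\,(3-\alpha)/(\alpha-1)^2$, which is nonpositive exactly in the borderline regime $\alpha\ge 3$ appearing throughout the paper. I would therefore finish by carrying the recursion-induced relation between $\alpha_k$ and $t_{k+1}$ into the quadratic and checking directly that $(1-a^2)s^2-(2a+1)s-1$ remains nonpositive along such sequences, which is the delicate, structure-dependent step as opposed to the purely formal first two.
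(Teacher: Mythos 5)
Your first two steps are correct, and they are essentially the only way to do them: the series defining $t_k$ is exactly the one assumed finite in \eqref{eq:choice:alpha}, peeling off the $i=k$ term and factoring $\alpha_k$ out of the tail gives $t_k=1+\alpha_k t_{k+1}$, and your reduction of the second property (via this recursion) to the scalar inequality $(1-a^2)s^2-(2a+1)s-1\le 0$ with $s=t_{k+1}\ge 1$, $a=\alpha_k$, including the trivial case $a\ge 1$, is also correct. The gap is the case $0\le a<1$: you never prove the required bound $t_{k+1}\le s_+(\alpha_k)$, and the proposal ends with a promise to ``check directly'' that the quadratic stays nonpositive ``along such sequences'', i.e.\ a verification for particular families of $(\alpha_k)$, and only asymptotically at that. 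That is not a proof of the lemma as stated, which concerns arbitrary nonnegative sequences satisfying \eqref{eq:choice:alpha}. Note also that there is no ``recursion-induced relation between $\alpha_k$ and $t_{k+1}$'' to carry into the quadratic: $t_{k+1}$ depends only on $\alpha_j$ for $j\ge k+1$ and is completely independent of $\alpha_k$.

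That independence is precisely why the step you deferred cannot be carried out: your instinct that the bound cannot come ``from mere summability'' is exactly right, and the honest conclusion is stronger --- the second inequality in \eqref{eq:properties_sequences_alphak_tk} is \emph{false} under the stated hypotheses. Take $\alpha_1=\alpha_2=0$ and $\alpha_k=1-\frac{2}{k}$ for $k\ge 3$. This sequence is nonnegative and nondecreasing, the products telescope,
\begin{equation}
\prod_{j=k}^{i}\alpha_j=\frac{(k-2)(k-1)}{(i-1)\,i},
\qquad
t_k=1+(k-2)(k-1)\sum_{i\ge k}\Big(\frac{1}{i-1}-\frac{1}{i}\Big)=k-1
\quad (k\ge 3),
\end{equation}
so \eqref{eq:choice:alpha} holds and even $t_k=\Theta(k)$ as required by \ref{assumption:parameter:friction}; yet $t_{k+1}^2-t_k^2=2k-1>k=t_{k+1}$ for every $k\ge 3$. (This is your family $\alpha_k=1-\alpha/k$ with $\alpha=2$, the subcritical regime your own computation flagged.) The paper offers no proof of this lemma --- it is quoted from \cite[Lemma 2.1]{AttouchTikAccel} --- and in that line of work the recursion $t_k=1+\alpha_k t_{k+1}$ is the actual content of the quoted result, while $t_{k+1}^2-t_k^2\le t_{k+1}$ is a separate growth condition imposed on the admissible sequences: it holds with equality for Nesterov's sequence \eqref{eq:Nesterov_alpha}, and for $\alpha_k=1-\alpha/k$ it holds precisely when $\alpha\ge 3$, but it is not a consequence of \eqref{eq:choice:alpha}. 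So parts one and two of your proposal are right, part three is not provable, and the correct resolution of the difficulty you ran into is the counterexample above, i.e.\ the statement needs the growth condition as an additional hypothesis rather than a further argument.
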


\begin{remark}[Classical choices of $\alpha_k$ and $t_k$] \label{rmk:classical_choices_tk}
Definitions \eqref{eq:choice:alpha} and \eqref{eq:choice:tk} above accommodate standard choices of sequences $(\alpha_k)$ and $(t_k)$. For example, in his seminal work  Nesterov \cite{Nesterov1983} considered
\begin{equation}   \label{eq:Nesterov_alpha}
\alpha_k=\frac{t_k-1}{t_{k+1}} \qquad\text{and}\qquad t_{k+1}=\frac{\sqrt{1+4t_k^2}+1}{2}, \qquad  t_1=1,
\end{equation}
which can be shown to verify the two conditions \eqref{eq:choice:alpha} and \eqref{eq:choice:tk}, as well as $k/2 \leq t_k \leq k$.
For a given $\alpha>1$, the two asymptotically equivalent choices 
$$
\alpha_k=1-\frac{\alpha}{k},\quad t_{k+1} = \frac{k}{\alpha-1},\quad\text{and}\quad \alpha_k=\frac{k-1}{k+\alpha-1},\quad t_{k+1} = \frac{k+\alpha-1}{\alpha-1}
$$
have been recently considered in \cite{ChambolleDossal2015, Apidopoulos2017, AttouchNesterovsubcritical2017} and  can be shown to  satisfy \ref{assumption:parameter:friction}.
For $\alpha=3$, these sequences are  asymptotically equivalent to  Nesterov sequences \eqref{eq:Nesterov_alpha}.
\end{remark}

\begin{remark}[Splitting of the loss]  \label{rmk:inf_conv}
In \cite{Garrigos2017} the decomposition $\ell_{\ytrue}= \phi_{\ytrue}~ \Box~ \psi_{\ytrue}$ was considered, where $\Box$ is the infimal convolution  and $\psi_{\ytrue}$ is the possible strongly convex component of $\ell_{\ytrue}$. In this case, the dual function $\ell_{\ytrue}^*(\cdot)$ can be  expressed as
$
\ell_{\ytrue}^* = \psi_{\ytrue}^*+  \phi_{\ytrue}^*,
$
where $\phi_{\ytrue}^*$ is in general non-smooth, while $\phi_{\ytrue}^*$ has Lipschitz gradient and can therefore be incorporated with the smooth term $ \RA$ in the dual function $d_\lambda$.
For several data discrepancies, however, $\psi_{\ytrue}= \delta_{\left\{0\right\}}$ (see \cite[Section 4.3]{Garrigos2017}). To  simplify the presentation, we do not consider this decomposition. 
\end{remark}

\subsection{Fast convergence of the algorithm}

We now prove the discrete analogue of Theorem \ref{T:CV continuous dynamic rates} for  \ref{alg:fw_accel}. We follow the approach  considered in \cite{AttouchNesterov,Candes2016,Attouch2015,AttouchTikAccel}.
\begin{theorem}[Fast convergence]\label{T:discrete convergence rates noiseless}
Let the assumptions \ref{assumption:data-fit convex and coercive}-\ref{assumption:data-fit conditionned locally}, \ref{H:R strongly convex}-\ref{assumption:AR2}, \ref{assumption:parameter:stepsize}-\ref{assumption:parameter:technical} hold true.
Let $(x_k)$ and $(u_k)$ be the sequences generated by algorithm \ref{alg:fw_accel}.
Then, there exists $C\in\left]0,+\infty\right[$ such that
\begin{equation}  \label{eq:rate}%\label{eq:rate:primal:it}
 d_{{\lambda_k}}(u_k) - \inf  d_0  \leq \frac{C}{t_k^2} 
 \quad \text{ and } \quad 
 \| x_k - x^\dagger \| \leq \frac{\sqrt{2C}}{\sqrt{\sigma}t_k}.
\end{equation}
\end{theorem}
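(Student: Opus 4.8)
The plan is to transpose the Lyapunov argument of Theorem \ref{T:CV continuous dynamic rates} to the discrete setting, replacing the energy \eqref{def:Lyapunov} by a FISTA-type sequence built on the parameters $(t_k)$ of Lemma \ref{lemma:sequence}. Recalling the notation \eqref{eq:composite}, each step of \ref{alg:fw_accel} is a forward--backward step for the composite dual function $d_{\lambda_k} = \RA + \lambda_k^{-1}\ell^*_{\bar{y}}(\lambda_k\cdot)$, performed from the inertial point $w_k = u_k + \alpha_k(u_k - u_{k-1})$: the smooth part $\RA$ (whose gradient is Lipschitz with constant $\|A\|^2/\sigma$ by \ref{H:R strongly convex}) is treated explicitly, while the nonsmooth part is treated by its proximity operator. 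With $\alpha_k = (t_k-1)/t_{k+1}$, consistent with $1+\alpha_k t_{k+1} = t_k$ in \eqref{eq:properties_sequences_alphak_tk}, I would introduce the auxiliary sequence $z_k := u_{k-1} + t_k(u_k - u_{k-1})$ and the discrete energy
\begin{equation*}
E_k := t_k^2\left(d_{\lambda_{k-1}}(u_k) - \inf d_0\right) + \tfrac{1}{2\tau}\|z_k - u^\dagger\|^2,
\end{equation*}
where $u^\dagger \in \argmin d_0$ is the point of \ref{assumption:parameter:technical}. The index $\lambda_{k-1}$ in the value term is chosen so that the forward--backward step producing $u_k$ acts precisely on $d_{\lambda_{k-1}}$.

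First I would establish the basic descent inequality for the forward--backward step. Using the stepsize condition \ref{assumption:parameter:stepsize}, which makes $\tau$ small relative to the Lipschitz constant of $\nabla \RA$, the standard three-point estimate gives, for every $z \in \Yy$,
\begin{equation*}
d_{\lambda_k}(u_{k+1}) \leq d_{\lambda_k}(z) + \tfrac{1}{2\tau}\|w_k - z\|^2 - \tfrac{1}{2\tau}\|u_{k+1}-z\|^2.
\end{equation*}
Applying this with $z = u_k$ and $z = u^\dagger$, combining with weights $t_{k+1}-1$ and $1$, and scaling by $t_{k+1}$, the usual FISTA algebra (expanding $w_k$ in terms of $u_k,u_{k-1}$ via $\alpha_k = (t_k-1)/t_{k+1}$) collapses the quadratic terms into $\tfrac{1}{2\tau}\big(\|z_k - u^\dagger\|^2 - \|z_{k+1}-u^\dagger\|^2\big)$ and yields, after using $t_{k+1}^2 - t_{k+1}\leq t_k^2$ from \eqref{eq:properties_sequences_alphak_tk},
\begin{equation*}
E_{k+1} \leq t_k^2\left(d_{\lambda_k}(u_k) - \inf d_0\right) + \tfrac{1}{2\tau}\|z_k - u^\dagger\|^2 + t_{k+1}\left(d_{\lambda_k}(u^\dagger) - \inf d_0\right).
\end{equation*}
Here I would invoke the monotonicity of Lemma \ref{L:functional properties}\ref{L:functional properties:decreasing function sequence}, which (since $\lambda_k \leq \lambda_{k-1}$) gives $d_{\lambda_k}(u_k) \leq d_{\lambda_{k-1}}(u_k)$ and guarantees $d_{\lambda_k}(u_k) - \inf d_0 \geq 0$, so that the first two terms are bounded by $E_k$ and hence $E_{k+1} \leq E_k + t_{k+1}\, r_{\lambda_k}(u^\dagger)$ with $r_{\lambda_k}(u^\dagger) := d_{\lambda_k}(u^\dagger) - \inf d_0$.

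Next I would control the accumulated error. As in \eqref{proof:ccd2}--\eqref{proof:ccd3}, assumption \ref{assumption:parameter:technical} keeps $\lambda_k u^\dagger$ in the region where $\ell_{\bar{y}}$ is conditioned, so Lemma \ref{L:functional properties}\ref{L:functional properties:diagonal rates via conditioning} bounds $r_{\lambda_k}(u^\dagger) \leq c\,\lambda_k^{1/(q-1)}$ with $c$ as in \eqref{proof:ccd3}. Telescoping and summing then give
\begin{equation*}
E_k \leq E_1 + c\sum_{j\geq 1} t_{j+1}\lambda_j^{1/(q-1)} = E_1 + c\,\Lambda =: C,
\end{equation*}
which is finite by \ref{assumption:parameter:relaxation} and \eqref{def:biglambda}. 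Dropping the nonnegative quadratic part of $E_k$ yields $t_k^2(d_{\lambda_{k-1}}(u_k) - \inf d_0)\leq C$, and using once more $d_{\lambda_k}(u_k)\leq d_{\lambda_{k-1}}(u_k)$ gives the first rate in \eqref{eq:rate}. The primal estimate follows exactly as in Step 2 of Theorem \ref{T:CV continuous dynamic rates}: Lemma \ref{L:functional properties}\ref{L:functional properties:from dual value to primal iterates} gives $\tfrac{\sigma}{2}\|x_k - x^\dagger\|^2 \leq d_0(u_k) - \inf d_0$, and monotonicity gives $d_0(u_k)\leq d_{\lambda_k}(u_k)$, whence $\|x_k - x^\dagger\| \leq \sqrt{2C}/(\sqrt{\sigma}\,t_k)$.

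The main obstacle is the middle step: carrying out the FISTA-type algebra while the objective $d_{\lambda_k}$ varies with $k$. Two perturbations must be reconciled simultaneously --- the inertial cross-terms, absorbed through the sequence inequality $t_{k+1}^2 - t_k^2 \leq t_{k+1}$ of Lemma \ref{lemma:sequence}, and the diagonal error arising both from comparing $d_{\lambda_k}(u^\dagger)$ with $\inf d_0$ and from switching the energy index from $\lambda_{k-1}$ to $\lambda_k$. The monotonicity of $\lambda \mapsto d_\lambda$ must be used in the correct direction, so that refining $\lambda_k$ only decreases the tracked value, while the summability encoded in $\Lambda$ keeps the accumulated perturbation bounded; this is the discrete counterpart of the integrability condition \ref{requirement:lambda1:cont} of the continuous proof.
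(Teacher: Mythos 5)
Your proposal is correct and takes essentially the same route as the paper's proof: the same discrete energy built on $z_k = u_{k-1}+t_k(u_k-u_{k-1})$ (up to a harmless index shift, $d_{\lambda_{k-1}}(u_k)$ versus the paper's $d_{\lambda_k}(u_k)$, compensated by invoking the monotonicity of Lemma \ref{L:functional properties}\ref{L:functional properties:decreasing function sequence} on the right-hand side rather than the left), the same sequence identities from Lemma \ref{lemma:sequence}, the same conditioning bound of Lemma \ref{L:functional properties}\ref{L:functional properties:diagonal rates via conditioning} summed against $\Lambda$, and the same primal conversion via Lemma \ref{L:functional properties}\ref{L:functional properties:from dual value to primal iterates}. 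Your three-point descent inequality is algebraically identical to the paper's gradient-mapping form \eqref{eq:descent}, so the FISTA algebra collapses the quadratic terms exactly as you claim.
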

\begin{proof}
Let $u^\dagger\in \argmin d_0$ be the minimizer of $d_0$ defined in assumption \ref{assumption:parameter:technical}, and
define, for every $k\geq 1$,  the discrete Lyapunov energy function:
\begin{equation}  \label{eq:discrete energy}
\mathcal{E}(k):=t_k^2 \Big( d_{{\lambda_k}}(u_k)-\inf d_0\Big) + \frac{1}{2\tau}\|z_k-u^\dagger\|^2,
\end{equation}
where $z_k$ is defined as:
\begin{equation}  \label{eq:def_zk:noisefree}
z_k:=u_{k-1} +t_k(u_k-u_{k-1}).
\end{equation}
Our goal is to get an estimate on the decay of $\mathcal{E}$ along time.
More precisely, we will show that for every $k\geq 1$
\begin{equation}  \label{eq:condition}
\mathcal{E}(k+1) - \mathcal{E}(k)
\leq  t_{k+1} \Big(d_{{\lambda_k}}(u^\dagger) -  \inf d_0\Big),
\end{equation}
which can be seen as a discrete analogue of \eqref{proof:ccd2}, and from which the desired accelerated convergence rates will follow in a straightforward manner.

For simplicity, let us denote by $\ell_k$  the function defined by setting
\begin{equation}  \label{eq:shortand_notation_Dk}
(\forall u\in \mathcal{Y})\quad \ell_k(u)=\lambda_k^{-1}{\ell}_{\bar{y}}^*(\lambda_k u).
\end{equation} 
To prove \eqref{eq:condition}, we define for every $k\geq 1$ the operator $G_k:\mathcal{Y}\to \mathcal{Y}$ as
\begin{equation}  \label{eq:fw_operator}
G_k(z)  := \frac{1}{\tau}\Big(z-\text{prox}_{\tau \ell_k}(z-\tau \nabla  \RA(z))\Big)
\end{equation}
and notice that the proximal step of  \ref{alg:fw_accel} can be written in terms of $G_k$ as $u_{k+1}=w_k - \tau G_k(w_k)$. The descent lemma yields (see, e.g., \cite{AttouchTikAccel,ChambolleDossal2015})
\begin{equation}  \label{eq:descent}
    d_{\lambda_k}(w-\tau G_k(w)) \leq  d_{\lambda_k}(u) + \langle G_k(w), w-u\rangle - \frac{\tau}{2}\|G_k(w)\|^2, \quad \text{for all }w,u\in\mathcal{Y},
\end{equation}
Evaluating \eqref{eq:descent} for $u=u_{k}$ and $w=w_k$, we get
\begin{equation}  \label{descent:first}
d_{{\lambda_k}}(u_{k+1}) \leq  d_{{\lambda_k}}(u_{k}) + \langle G_k(w_k), w_k - u_{k}\rangle - \frac{\tau}{2}\|G_k(w_k)\|^2.
\end{equation}
Evaluating \eqref{eq:descent} for $u=u^\dagger$ and $w=w_k$ we derive
 \begin{equation}  \label{descent:second}
d_{{\lambda_k}}(u_{k+1}) \leq  d_{{\lambda_k}}(u^\dagger) + \langle G_k(w_k), w_k -u^\dagger\rangle - \frac{\tau}{2}\|G_k(w_k)\|^2.
\end{equation}
We now multiply \eqref{descent:first} by $t_{k+1}-1$ and we add it to \eqref{descent:second}, and we obtain
\begin{align}
t_{k+1} d_{{\lambda_k}}(u_{k+1}) 
&\leq  (t_{k+1}-1)d_{{\lambda_k}}(u_{k}) +  d_{{\lambda_k}}(u^\dagger)  \nonumber\\
\label{ineq:1}&+ \langle  G_k(w_k), (t_{k+1}-1)(w_k-u_{k}) + (w_{k} - u^\dagger)\rangle  -\frac{\tau}{2}t_{k+1}\|  G_k(w_k)\|^2 . 
\end{align}
As an immediate consequence of Lemma \ref{lemma:sequence}, we observe the following property:
\begin{align}  \label{eq:property_zk}
(t_{k+1}-1)(w_k-u_{k}) + w_k & = u_{k} + t_{k+1}(w_k -u_{k}) \notag\\
& = u_{k} + t_{k+1} \alpha_k(u_{k} -u_{k-1}) \\
& = u_{k-1} + (1+ t_{k+1} \alpha_k)(u_{k} -u_{k-1}) \notag\\
& = u_{k-1} +t_k(u_{k} -u_{k-1}) = z_k. \notag
\end{align}
Thanks to \eqref{eq:def_zk:noisefree}, the fact that $z_k-\tau t_{k+1}G_k(w_k)=z_{k+1}$ and the previous equality, we can reorder the terms in \eqref{ineq:1} and rewrite it as
\begin{align}
t_{k+1} ( d_{{\lambda_k}}(u_{k+1}) -  d_{{\lambda_k}}(u^\dagger)) 
& \leq  (t_{k+1}-1) (d_{{\lambda_k}}(u_{k}) -  d_{{\lambda_k}}(u^\dagger)) \notag\\
& + \frac{1}{2\tau t_{k+1}}\left(\|z_{k}-u^\dagger\|^2-\|z_{k+1}-u^\dagger\|^2\right). \notag 
\end{align}
We now multiply everything by $t_{k+1}$, re-arrange and get
\begin{align}  \label{descent:third}
& t^2_{k+1} ( d_{{\lambda_k}}(u_{k+1}) -  d_{{\lambda_k}}(u^\dagger))   + \frac{1}{2\tau}
\|z_{k+1}-u^\dagger\|^2   \\
&\leq  (t^2_{k+1}-t_{k+1}) (d_{{\lambda_k}}(u_{k}) -  d_{\lambda_k}(u^\dagger)) + \frac{1}{2\tau } \|z_{k}-u^\dagger\|^2.  \notag
\end{align}
Equivalently:
\begin{align}  \label{descent:third2}
& t^2_{k+1} \Big( d_{{\lambda_k}}(u_{k+1}) -  d_{{\lambda_k}}(u^\dagger) \Big)   + \frac{1}{2\tau}
\|z_{k+1}-u^\dagger\|^2 \nonumber  \\
&\leq  t^2_k \Big(d_{{\lambda_k}}(u_{k}) -  d_{{\lambda_k}}(u^\dagger)\Big) +   (t^2_{k+1}-t_{k+1} - t_{k}^2) \Big( d_{{\lambda_k}}(u_{k}) -  d_{{\lambda_k}}(u^\dagger) \Big)+ \frac{1}{2\tau } \|z_{k}-u^\dagger\|^2. \notag
\end{align}
To get the desired terms, we first use on the left-hand side  the monotonicity property of the function $ d_{{\lambda_k}}(\cdot)$ as a function of $k$ (see  Lemma  \ref{L:functional properties}\ref{L:functional properties:decreasing function sequence}) and then add and subtract in the parentheses  the term $\inf d_0$, thus getting:
\begin{align}  \label{descent:fifth}
&t^2_{k+1} \Big( d_{{\lambda_{k+1}}}(u_{k+1}) -   \inf d_0 \Big)   + \frac{1}{2\tau}
\|z_{k+1}-u^\dagger\|^2   \notag \\
&\leq  t^2_k \Big(d_{{\lambda_{k}}}(u_{k}) - \inf d_0 \Big) +   (t^2_{k+1}-t_{k+1} - t_{k}^2) \Big( d_{{\lambda_{k}}}(u_{k}) -  \inf d_0\Big) \\
& +t_{k+1} \Big(d_{{\lambda_{k}}}(u^\dagger) -  \inf d_0\Big)   +  \frac{1}{2\tau } \|z_{k}-u^\dagger\|^2. \notag
\end{align}
After rearranging and using the definition of $\mathcal{E}$ in \eqref{eq:discrete energy}, we deduce:
\begin{align}  \label{descent:fifth}
\mathcal{E}(k+1) +    (t^2_{k}+t_{k+1} - t_{k+1}^2) \Big( d_{{\lambda_{k}}}(u_{k}) -  \inf d_0\Big) \leq  \mathcal{E}(k) + t_{k+1} \Big(d_{{\lambda_{k}}}(u^\dagger) -  \inf d_0\Big). \nonumber
\end{align}

\noindent Thanks to \eqref{eq:properties_sequences_alphak_tk} and  Lemma~\ref{L:functional properties}\ref{L:functional properties:decreasing function sequence}, we can neglect the second term  in the left-hand side of the above inequality,  finally getting the desired  \eqref{eq:condition}.
Iterating \eqref{eq:condition}  recursively gives
\begin{align}  \label{eq:decay_complete}
\mathcal{E}(k) \leq \mathcal{E}(1) +\sum_{j=1}^{k-1}t_{j+1}\Big( d_{\lambda_{j}}(u^\dagger)-\inf d_0 \Big).
\end{align}
To  bound the sum appearing on the right hand side, we need to analyze the residuals $r_j:= d_{\lambda_{j}}(u^\dagger) -\inf d_0$. Similarly as for the estimation obtained in the continuous case, we can use for this purpose the property in Lemma \ref{L:functional properties}\ref{L:functional properties:diagonal rates via conditioning} and get that for some fixed constant $c$ independent on $j$ (defined analogously as in \eqref{proof:ccd3}), we have
$$
r_j\leq c \lambda_j^{\frac{1}{q-1}},\qquad\text{for every }j\geq 1.
$$
By assumption \ref{assumption:parameter:relaxation}, with $\Lambda$ as  in \eqref{def:biglambda}, we thus conclude that
$$
\sum_{j=1}^{k-1} t_{j+1} r_j \leq  c \sum_{j=1}^{k-1} t_{j+1} \lambda_j^{\frac{1}{q-1}}  \leq c \Lambda < +\infty
$$
This allows us to deduce from \eqref{eq:decay_complete} the convergence rate on the dual values in \eqref{eq:rate}, by taking $C = \Ee(1) + c \Lambda$.
Finally, the convergence rate on the primal iterates  in \eqref{eq:rate} follows from  Lemma \ref{L:functional properties}\ref{L:functional properties:from dual value to primal iterates}.
\end{proof}

\begin{remark}[Nesterov scheme as a special case]\label{R:optimality of rates}
Let $f$ be any differentiable function in $ \Gamma_0(\Xx)$ with a Lipschitz gradient.
Take $R=f^*$, $A=-I$, $\bar y=0$, and $\ell(y_1,y_2) = \delta_{0}(y_2 - y_1)$, so that assumptions \ref{assumption:data-fit convex and coercive}-\ref{assumption:data-fit conditionned locally} and \ref{H:R strongly convex}-\ref{assumption:AR2} are verified.
In that case, $d_0 = f$, and \ref{alg:fw_accel} reads
\[
u_0 = u_1 \in \Yy, \quad \text{ compute for } k \geq 1 \quad 
\begin{cases}
 w_k & = u_k + \alpha_k(u_k-u_{k-1}), \\
 u_{k+1} &= w_k - \tau\nabla f(w_k),\\
    x_{k+1} & = \nabla f(u_{k+1}),
\end{cases}
\]
which in the dual exactly performs Nesterov's method \cite{Nes04}.
From our rates and Lemma \ref{L:functional properties}\ref{L:functional properties:decreasing function sequence}, we deduce that $f(u_k) - \inf f = O(k^{-2})$. 
And, according to Nemirovski and Yudin optimality result \cite[Theorem 2.1.7]{Nes04}, these rates are optimal over the class of Lipschitz smooth convex functions.
\end{remark}

\begin{remark}[Different growth for $t_k$]\label{R:different choices t_k}
In assumption \ref{assumption:parameter:friction} we ask the sequence $(t_k)$ to satisfy $t_k = \Theta(k)$, but this is actually not used in the proof of Theorem \ref{T:discrete convergence rates noiseless}.
What is crucial  is  that $t_k <+\infty$, so that Lemma \ref{lemma:sequence} can be used.
Indeed, one might ask whether it is possible to require $t_k =\Theta( k^\beta)$, with $\beta > 1$ to  improve the rates in \eqref{eq:rates}.
It is a simple exercise to verify that this is not possible, since \eqref{eq:properties_sequences_alphak_tk} implies $t_k \leq t_1 k$, hence  $\beta \leq 1$ so that the best rates are achieved for $\beta=1$.
\end{remark}

\section{Stability properties in the presence of errors}  \label{sec:errors}

We  now study the iterative regularization properties of  \ref{alg:fw_accel} in the presence of noisy data, given by
\begin{equation}\label{alg:fw_accel1}
\hat{u}_0 = \hat u_1 \in \Yy, \quad \text{ compute for } k \geq 1 \quad 
\begin{cases}
 \hat w_k & = \hat u_k + \alpha_k(\hat u_k-\hat u_{k-1}), \\
 \hat u_{k+1} &= \mathrm{prox}_{\frac{\tau}{\lambda_{k}} \ell^*_{\hat {y}}(\lambda_k\cdot)}\left(\hat w_k - \tau\nabla \RA(\hat w_k)\right),\\
    \hat x_{k+1} & = \nabla R^*(-A^* \hat u_{k+1}). 
\end{cases}
\end{equation}
A first question is how much the dual and primal iterates $\hat{u}_k$ and $\hat{x}_k$ are affected by  noise in terms of both convergence and stability. 
We discuss these issues  showing that the noise  can be interpreted as an error in the calculation of the proximal step of the algorithm.  Before starting, we motivate our analysis with the following example.

\begin{example}\label{E:stab_err}
Assume  $\Yy = \R$ and $\hat y = \bar y + \delta$, for some $\bar y,~ \delta >0$.
The algorithm makes use of the datum only in the evaluation of the proximal operator $\prox_{\frac{\tau}{\lambda}\ell^*_{\hat y}(\lambda \cdot)}$. One way to measure the impact of the noise is to find an upper bound for $\vert \prox_{\frac{\tau}{\lambda}\ell^*_{\bar y}(\lambda \cdot)}(w) - \prox_{\frac{\tau}{\lambda}\ell^*_{\hat y}(\lambda \cdot)}(w) \vert$, for  $w \in \Yy$  (see \cite[Lemma 10]{Garrigos2017}. As two particular cases consider:
\begin{itemize}
	\item $\ell_y= \frac{1}{2}\vert \cdot -y \vert^2$. We have:
	\[
	\sup\limits_{\bar y \in \Yy}\sup\limits_{w \in \Yy} \vert \prox_{\frac{\tau}{\lambda}\ell^*_{\bar y}(\lambda \cdot)}(w) - \prox_{\frac{\tau}{\lambda}\ell^*_{\hat y}(\lambda \cdot)}(w) \vert = \frac{\tau \delta}{1+ \tau \lambda}.
	\]
	\item $\ell_y = \mathrm{kl}(y;\cdot)$. We have:
\[
\sup\limits_{\bar y \in \Yy}\sup\limits_{w \in \Yy} \vert \prox_{\frac{\tau}{\lambda}\ell^*_{\bar y}(\lambda \cdot)}(w) - \prox_{\frac{\tau}{\lambda}\ell^*_{\hat y}(\lambda \cdot)}(w) \vert =  \sqrt{\frac{\tau \delta }{\lambda}}.
\]
\end{itemize}
In the former example, the error assumed in the evaluation of $\bar y$ is propagated along the iterations with order $\delta$.
However, a different behavior is observed for the latter example. The square-root dependence on $\delta$ makes the estimate worse in a small noise regime, when $\delta\ll 1$. 
Further,  the diagonal convergence to zero of the sequence $(\lambda_k)$, assumed in \ref{assumption:parameter:relaxation}, makes the overall error growing fast along the iterations.
\end{example}
Example~\ref{E:stab_err} shows that not all losses behave the same. Our analysis needs to be flexible enough to take these differences into account, and  avoid  sub-optimal results via  a worst-case analysis. This is the purpose of this section, where we will see that additive losses (in the sense of Definition \ref{D:additive datafit}) behave essentially like $\frac{1}{2}\vert \cdot -y \vert^2$, while  Kullback-Leibler belongs to a class of less stable losses.

\subsection{$\varepsilon$-subdifferentials and inexact proximal calculus}  \label{sec:preliminaries}
In this section, we define perturbations in a precise way. We first recall standard definitions regarding the approximate subdifferential and proximal-type minimization problems.
\begin{definition}[$\varepsilon$-subdifferential \cite{ZalinescuBook2002}]
Let $\mathcal{H}$ be a Hilbert space,  $f\in \Gamma_0(\mathcal{H})$ and $\varepsilon\geq 0$. The $\varepsilon$-subdifferential of $f$ at $x\in\mathrm{dom}~ f$ is the set
\begin{equation}   \label{eq:eps_subdifferential}
\partial_\varepsilon f(x) = \left\{ u\in \mathcal{H} : f(x') \geq f(x) + \langle u, x'-x \rangle -\varepsilon,\text{ for all }x'\in \mathcal{H} \right\}.
\end{equation}
\end{definition}
Such a notion generalizes that of the  subdifferential recalled in \eqref{def:subdiff}.
In particular, if $\varepsilon \geq 0$, then  $\partial f(x) \subset \partial_{\varepsilon} f(x)$ for any $x\in\mathcal{H}$, and we have
\begin{equation}  \label{eps_subdif:charact}
0\in \partial_\varepsilon f(x)\quad \Longleftrightarrow\quad
x \in \text{argmin}^{\eps} \, f =\{ x' \in \Hh : f(x') \leq \inf f + \varepsilon \}.
\end{equation}
The following are useful characterizations of the proximal operator of  $f\in\Gamma_0(\mathcal{H})$ with parameter $\eta>0$,
\begin{equation}\label{eq:prox characterisations}
p=\prox_{\eta f}(x) \,
\Leftrightarrow \, 
\frac{x-p}{\eta} \in \partial f(p) \, 
\Leftrightarrow \,
p = \argmin_z~\left\{ f(z) + \frac{1}{2\eta} \Vert z - x \Vert^2 \right\}.
\end{equation}
Next, we introduce notions of approximation of proximal points that can be seen as relaxed conditions of the characterizations in \eqref{eq:prox characterisations} (for  details see \cite{SalzoVilla2012, AujolDossalStability2015}).

\begin{definition}[Approximation of proximal points]  \label{def:approximation}
Let $f\in \Gamma_0(\mathcal{H})$, $x \in \Hh$, $\eta >0$ and $p:=\mathrm{prox}_{\eta f} (x)$. 
We say that  $\hat{p}\in\mathcal{H}$ is: \vspace{0.1cm}\\
    $\bullet$ a \textbf{type $1$} approximation of $p$ with precision $\varepsilon_1$, and we write $\hat{p}\approx^{\varepsilon_1}_{\text{\tiny 1}} p$, if:
    \begin{align}
        & \exists e \in \Hh,~ \exists (\eps_1,\eps_2,\eps_3) \in [0,+\infty[^2,~ \Vert e \Vert \leq \varepsilon_3, \ \eps_2^2 + \eps_3^2 \leq \eps_1^2,~  \frac{x + e - \hat p}{\eta} \in \partial_{\frac{\eps_2^2}{2\eta}} f(\hat p). \notag
    \end{align}
    $\bullet$ a \textbf{type $2$} approximation of $p$ with precision $\varepsilon_2$, and we write $\hat{p}\approx^{\varepsilon_2}_{\text{\tiny 2}} p$, if
    \begin{equation}
         \exists \eps_2\in [0,+\infty[,~ \frac{x - \hat p}{\eta} \in \partial_{\frac{\eps_2^2}{2\eta}} f(\hat p).
    \end{equation}
    $\bullet$ a \textbf{type $3$} approximation of $p$ with precision $\varepsilon_3$, and we write $\hat{p}\approx^{\varepsilon_3}_{\text{\tiny 3}} p$, if
    \begin{equation}
        \exists e \in \Hh, \exists \eps_3\in [0,+\infty],~ \Vert e \Vert \leq \varepsilon_3,~ ~\frac{x + e - \hat p}{\eta} \in \partial f(\hat p).
    \end{equation}
\end{definition}
Type 3 approximation simply describes an additive error in the argument of the proximal map, i.e. $\hat p = \prox_{\eta f}(x+e)$.
We show in Section \ref{sec:distance_based} that this type of error arises naturally when additive-type data-fit functions are used.
Type 2 approximation considers an error in the subdifferential operator.
The definition of type 1 approximation can be seen as a combination of type 2 and 3 approximations,
and the following Lemma provides an easy characterization.
\begin{lemma}[\cite{SchmidtLeRouxBach2011,SalzoVilla2012}]\label{lemma:error equivalence and implications}
Let $f \in \Gamma_0(\mathcal{H})$, $x \in \Hh$, $\eta >0$.
Then:
    \begin{equation}
        \hat{p}\approx^{\varepsilon_1}_{\text{\tiny 1}} \mathrm{prox}_{\eta f} (x) \quad
		\Leftrightarrow \quad        
        \hat p \in \text{\rm argmin}_{\eps_1}~\left\{ f(\cdot) + \frac{1}{2\eta} \Vert \cdot - x \Vert^2 \right\}.
    \end{equation}
\end{lemma}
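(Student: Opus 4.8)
The plan is to reduce everything to the optimality gap of the strongly convex proximal subproblem. Write $\Phi(z) := f(z) + \frac{1}{2\eta}\|z-x\|^2$, which is $\eta^{-1}$-strongly convex with unique minimizer $p := \prox_{\eta f}(x)$, and read the right-hand side $\hat p \in \mathrm{argmin}_{\varepsilon_1}\{f(\cdot)+\frac{1}{2\eta}\|\cdot-x\|^2\}$ as the gap bound $\Phi(\hat p)-\inf\Phi \leq \frac{\varepsilon_1^2}{2\eta}$ (this is the scaling that makes the two sides of the equivalence compatible with the squared tolerances in Definition \ref{def:approximation}). The other tool I would use is the Fenchel--Young characterization of the $\varepsilon$-subdifferential: for $v\in\mathcal{H}$ one has $v \in \partial_{\varepsilon}f(\hat p)$ if and only if the Fenchel--Young defect $D_f(\hat p,v):=f(\hat p)+f^*(v)-\langle v,\hat p\rangle$ satisfies $D_f(\hat p,v)\leq \varepsilon$. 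With these in hand, both implications become computations matching the type-1 data $(e,\varepsilon_2,\varepsilon_3)$ against the single quantity $\Phi(\hat p)-\inf\Phi$.

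For the forward implication, suppose $\hat p \approx^{\varepsilon_1}_{1} \prox_{\eta f}(x)$ with data $e,\varepsilon_2,\varepsilon_3$, and set $v := \eta^{-1}(x+e-\hat p) \in \partial_{\varepsilon_2^2/(2\eta)}f(\hat p)$. The $\varepsilon$-subgradient inequality gives an affine-plus-quadratic minorant of $\Phi$, namely $\Phi(z)\geq f(\hat p)+\langle v,z-\hat p\rangle-\frac{\varepsilon_2^2}{2\eta}+\frac{1}{2\eta}\|z-x\|^2$. I would minimize this minorant explicitly over $z$: its gradient vanishes at $z^\star = x-\eta v = \hat p - e$, and substituting $z^\star$ and collecting the quadratic terms yields
\[
\inf_{z}\Phi(z) \;\geq\; \Phi(\hat p) - \frac{\|e\|^2+\varepsilon_2^2}{2\eta}.
\]
Since $\|e\|\leq\varepsilon_3$ and $\varepsilon_2^2+\varepsilon_3^2\leq\varepsilon_1^2$, this rearranges to $\Phi(\hat p)-\inf\Phi \leq \frac{\varepsilon_1^2}{2\eta}$, which is exactly the right-hand side.

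The reverse implication is the delicate one, and here the key idea is to anchor the correction at the \emph{exact} proximal point. Assume $\Phi(\hat p)-\inf\Phi\leq\frac{\varepsilon_1^2}{2\eta}$ and choose $e := \hat p - p$ together with $v := \eta^{-1}(x-p)$. Because $p=\prox_{\eta f}(x)$ we have $v \in \partial f(p)$, so $f^*(v)=\langle v,p\rangle-f(p)$, and one checks $x+e-\hat p = x-p$, i.e. $v=\eta^{-1}(x+e-\hat p)$ is the vector required by the type-1 definition. The crux is the exact identity
\[
2\eta\,D_f(\hat p,v) + \|e\|^2 \;=\; 2\eta\big(\Phi(\hat p)-\Phi(p)\big),
\]
which I would obtain by expanding $D_f(\hat p,v)=f(\hat p)-f(p)-\langle v,\hat p-p\rangle$ and the difference $\|\hat p-x\|^2-\|p-x\|^2$ using $\hat p - x = (p-x)+e$; the $f(\hat p)-f(p)$ terms cancel and both sides collapse to $\|e\|^2-2\langle x-p,e\rangle$. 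Setting $\varepsilon_2^2 := 2\eta\,D_f(\hat p,v)$ and $\varepsilon_3 := \|e\|$, the identity and the gap bound give $\varepsilon_2^2+\varepsilon_3^2 = 2\eta(\Phi(\hat p)-\inf\Phi)\leq\varepsilon_1^2$, while $D_f(\hat p,v)=\frac{\varepsilon_2^2}{2\eta}$ means precisely $v\in\partial_{\varepsilon_2^2/(2\eta)}f(\hat p)$, so $\hat p\approx^{\varepsilon_1}_{1}\prox_{\eta f}(x)$.

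The main obstacle is exactly this reverse construction: the two naive guesses both fail. Taking $e=0$ (a pure type-2 certificate) forces $v=\eta^{-1}(x-\hat p)$, whose Fenchel--Young defect can be $+\infty$ since $\hat p$ is only an approximate minimizer; taking $\varepsilon_2=0$ (a pure type-3 certificate) requires $\partial f(\hat p)\neq\emptyset$, which need not hold at an arbitrary near-minimizer. Anchoring at $p$ sidesteps both issues at once, because $v\in\partial f(p)$ is automatically a genuine subgradient there, and the Pythagorean expansion shows the error budget splits \emph{exactly} into the type-3 part $\|e\|^2$ and the type-2 part $2\eta D_f(\hat p,v)$ with no slack lost.
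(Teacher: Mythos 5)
Your proof is correct, and it is worth noting that the paper itself gives no proof of this lemma: it is quoted from \cite{SchmidtLeRouxBach2011,SalzoVilla2012}, so the relevant comparison is with the arguments in those references. Two things you did deserve emphasis. First, the interpretation: you read $\hat p \in \mathrm{argmin}_{\varepsilon_1}$ as $\Phi(\hat p)-\inf\Phi\leq \varepsilon_1^2/(2\eta)$ with $\Phi:=f+\frac{1}{2\eta}\Vert\cdot-x\Vert^2$, which is the convention of \cite{SalzoVilla2012}; with the unscaled reading suggested by the paper's earlier notation in \eqref{eps_subdif:charact} the equivalence would be false, so this choice is forced and correct. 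Second, your route is genuinely different from the cited one. The standard proof rewrites the right-hand side as $0\in\partial_{\varepsilon_1^2/(2\eta)}\Phi(\hat p)$ and invokes the exact (Hiriart-Urruty--Phelps) sum rule for $\varepsilon$-subdifferentials---legitimate here because the quadratic is continuous---together with the closed-form description $\partial_\beta\bigl(\tfrac{1}{2\eta}\Vert\cdot-x\Vert^2\bigr)(\hat p)=\bigl\{(\hat p-x+w)/\eta:\Vert w\Vert\leq\sqrt{2\eta\beta}\bigr\}$; splitting the tolerance as $\alpha+\beta$ then yields the triple $(e,\varepsilon_2,\varepsilon_3)$. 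You avoid this machinery entirely: your forward direction is a bare-hands minimization of the $\varepsilon_2$-subgradient minorant (whose minimizer is indeed $z^\star=x-\eta v=\hat p-e$, with value $\Phi(\hat p)-(\Vert e\Vert^2+\varepsilon_2^2)/(2\eta)$ there, as you claim), and your reverse direction replaces the sum rule by the explicit certificate $e:=\hat p-p$, $v:=(x-p)/\eta\in\partial f(p)$, validated through the identity $2\eta\,\bigl(f(\hat p)+f^*(v)-\langle v,\hat p\rangle\bigr)+\Vert e\Vert^2=2\eta\bigl(\Phi(\hat p)-\Phi(p)\bigr)$. I verified this identity: using the Fenchel--Young equality at $p$, both sides reduce to $2\eta\bigl(f(\hat p)-f(p)\bigr)+\Vert e\Vert^2-2\langle x-p,e\rangle$, and the remaining bookkeeping ($\varepsilon_2^2:=2\eta\bigl(f(\hat p)+f^*(v)-\langle v,\hat p\rangle\bigr)$ is nonnegative and finite, $\varepsilon_3:=\Vert e\Vert$, $\varepsilon_2^2+\varepsilon_3^2=2\eta(\Phi(\hat p)-\inf\Phi)\leq\varepsilon_1^2$) is sound; the link between the Fenchel--Young defect and $\partial_\varepsilon f$ that you need is exactly Lemma \ref{lemma:characterisation_eps} of the paper. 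What your approach buys is self-containedness---no approximate-subdifferential calculus beyond the definition---plus the structural insight that the optimality gap of a near-minimizer splits \emph{exactly} into a type-2 part and a type-3 part when anchored at the exact proximal point; your closing observation about why the naive certificates $e=0$ or $\varepsilon_2=0$ fail is also accurate.
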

In summary, those three type of errors correspond to relaxations of the characterizations in  \eqref{eq:prox characterisations}.
We are ready to study the stability properties of the \IDDD algorithm.

\subsection{Stability estimates in the presence of errors}  \label{sec:error_estimates}

Using the notions introduced  in the previous section, we can quantify the error due to  replacing  $\bar{y}$ by $\hat{y}$.
In particular, recalling Definition \ref{def:approximation}, we assume that at each iteration the proximal step   with $\hat{y}$ is an $i$-type approximation of the proximal step  with $\bar{y}$, where $i \in \{1,2,3\}$.
\begin{enumerate}[label=${(E_i)}$]
    \item \label{assumption:approx_i} 
    For every $k \geq 1$, $\exists {\varepsilon_{i,k}} \in [0,+\infty[,\text{ s.t. }\forall w \in \Yy$,
    \[
    \prox_{\frac{\tau}{\lambda_k}\ell^*_{\hat y}(\lambda \cdot)}(w) \approx^{\varepsilon_{i,k}}_{\text{\tiny $i$}} 
    \prox_{\frac{\tau}{\lambda_k}\ell^*_{\bar y}(\lambda \cdot)}(w).
    \]
\end{enumerate}
In Section \ref{sec:stability} we  show that this is natural  for classical data-fit terms.  We are  ready to prove our second main result for \ref{alg:fw_accel} about  error estimates under assumption \ref{assumption:approx_i} with $i=1$. Stability results for  type $2,3$ approximations are  deduced  noting that for these choices the error terms with $\varepsilon_{3,k}$ and $\varepsilon_{2,k}$ vanish, respectively, for every $k$.

\begin{theorem}[Error estimates for type 1 errors]\label{T:main estimate}
Assume that \ref{assumption:data-fit convex and coercive}-\ref{assumption:data-fit conditionned locally}, \ref{H:R strongly convex}-\ref{assumption:AR2}, \ref{assumption:parameter:stepsize}-\ref{assumption:parameter:technical} hold true.
Let $(\hat x_k)$, $(\hat u_k)$ be the sequences generated by \ref{alg:fw_accel} with noisy datum $\ynoisy$, and that \ref{assumption:approx_i} holds with $i=1$.
Then, we have the  stability estimate:
\begin{equation}   \label{error:estimate:iterates}
    (\forall k \geq 1) \quad
     t_k^2\frac{\sigma \tau}{2} \Vert \hat{x}_k - x^\dagger\Vert^2   
    \leq 
    C
	+   \sum_{j=1}^{k-1} t_{j+1}^2\varepsilon_{2,j}^2
	+ \frac{5}{2} \Big(\sum_{j=1}^{k-1} {t_{j+1}} \eps_{3,j} \Big)^2,
\end{equation}
where the constant $C$ is {defined} as 
\begin{equation}\label{def:constant rates}
C := 
\begin{cases}
    2\tau t_1^2 \Big({d}_1(\hat u_0) - \inf {d}_0\Big) + \Vert \hat u_0 - u^\dagger \Vert^2
    & \text{ if } q=1, \\
    2\tau t_1^2 \Big({d}_1(\hat u_0) - \inf {d}_0\Big) + \Vert \hat u_0 - u^\dagger \Vert^2 + 2\tau  \Lambda (1-\frac{1}{q}) {\gamma^{-1/(q-1)}} \Vert u^\dagger \Vert^{q/(q-1)}
    & \text{ if } q >1.
\end{cases}
\end{equation}
\end{theorem}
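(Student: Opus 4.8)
The plan is to mimic the structure of the noiseless proof (Theorem \ref{T:discrete convergence rates noiseless}), tracking the additional error terms that arise when the exact proximal step is replaced by a type 1 approximation. The key observation from Lemma \ref{lemma:error equivalence and implications} is that $\hat u_{k+1} \approx_1^{\varepsilon_{1,k}} \prox_{\tau \ell_k}(\hat w_k - \tau\nabla\RA(\hat w_k))$ means the computed iterate is an $\varepsilon_{1,k}$-approximate minimizer, which decomposes (via Definition \ref{def:approximation}) into a type 2 part $\varepsilon_{2,j}$ living in the $\varepsilon$-subdifferential and a type 3 part $e_j$ with $\|e_j\|\le\varepsilon_{3,j}$ representing a perturbation of the argument. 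I would first define the same discrete Lyapunov energy $\mathcal{E}(k)$ as in \eqref{eq:discrete energy}, now built from the noisy iterates $\hat u_k$ and the noisy $z_k$, and re-derive the two descent inequalities analogous to \eqref{descent:first} and \eqref{descent:second}.

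The central technical step is the inexact descent lemma: where the noiseless proof used \eqref{eq:descent} as an exact consequence of the descent lemma for $d_{\lambda_k}$, here I would invoke the perturbed version of this inequality. Because $\hat u_{k+1}$ only approximately solves the proximal subproblem, the type 2 error contributes an additive $\varepsilon_{2,k}^2/(2\tau)$-type slack coming from the $\frac{\eps_2^2}{2\eta}$-subdifferential condition, and the type 3 error $e_j$ enters through a cross term $\langle \cdot, e_j\rangle$ in the expansion of the squared distance $\|z_{k+1}-u^\dagger\|^2$. I would carry these two error contributions through the same algebra that produced \eqref{eq:condition}, obtaining a perturbed recursion of the form
\begin{equation*}
\mathcal{E}(k+1)-\mathcal{E}(k) \leq t_{k+1}\Big(d_{\lambda_k}(u^\dagger)-\inf d_0\Big) + t_{k+1}^2\varepsilon_{2,k}^2 + (\text{cross term in } e_k).
\end{equation*}
Summing recursively then yields $\mathcal{E}(k)\le \mathcal{E}(1) + \sum_j t_{j+1}r_j + \sum_j t_{j+1}^2\varepsilon_{2,j}^2 + (\text{accumulated type 3 terms})$, where the first sum is controlled by $c\Lambda$ exactly as before.

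The main obstacle will be handling the type 3 cross terms cleanly, since the perturbation $e_j$ appears inside $\|z_{j+1}-u^\dagger\|^2$ and does not telescope. The standard device here is to bound the accumulated cross terms by something that lets one absorb a $\sqrt{\mathcal{E}(k)}$ factor and then invoke a discrete Gronwall-type lemma: after writing $a_k:=\sqrt{\mathcal{E}(k)}$, the cross terms produce an inequality of the shape $a_k^2 \le B + \sum_j (t_{j+1}\varepsilon_{3,j})\, a_j$, whose resolution (using that $a_j$ is essentially the running maximum) gives $a_k \le \sqrt{B} + \tfrac{1}{2}\sum_j t_{j+1}\varepsilon_{3,j} + \text{(lower order)}$, and squaring produces the factor $\tfrac{5}{2}(\sum_j t_{j+1}\varepsilon_{3,j})^2$ appearing in \eqref{error:estimate:iterates}. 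The explicit constant $\tfrac{5}{2}$ strongly suggests exactly such a Gronwall argument with the elementary bound $(u+v)^2\le \tfrac{5}{2}(\cdots)$ being used to split the resolved inequality. Finally, the passage from the bound on $\mathcal{E}(k)$ to the claimed estimate on $\|\hat x_k - x^\dagger\|^2$ uses the strong convexity link of Lemma \ref{L:functional properties}\ref{L:functional properties:from dual value to primal iterates} together with the monotonicity of $d_{\lambda_k}$, precisely as in Step 2 of the noiseless proof, with $C$ being the constant $\mathcal{E}(1)+c\Lambda$ written out explicitly in \eqref{def:constant rates} according to whether $q=1$ or $q>1$.
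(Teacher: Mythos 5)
Your proposal is correct and follows essentially the same route as the paper's proof: the same noisy Lyapunov energy, the same splitting of the type 1 error into an $\varepsilon_{2,k}^2/(2\tau)$ subdifferential slack plus a type 3 perturbation $e_k$ entering as a cross term, the same perturbed recursion summed against the $c\Lambda$ bound, and the same discrete Gronwall-type resolution (the paper's Lemma~\ref{lemma:attouch (sequences)}, applied to $\Vert \hat z_k - u^\dagger\Vert$ rather than to $\sqrt{\mathcal{E}(k)}$, which is equivalent since $\hat{\mathcal{E}}(k)\geq \frac{1}{2\tau}\Vert \hat z_k - u^\dagger\Vert^2$) followed by Young's inequality to produce the $\tfrac{5}{2}$ constant, and finally the strong-convexity link of Lemma~\ref{L:functional properties}\ref{L:functional properties:from dual value to primal iterates}.
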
 

\begin{proof}
Following the proof of Theorem~\ref{T:discrete convergence rates noiseless}, we define  the discrete energy function
\begin{equation}  \label{eq:discrete energy_IN}
\hat{\mathcal{E}}(k):=t_k^2 \Big( {d}_{{\lambda_k}}(\hat{u}_k)-\inf {d}_0\Big) + \frac{1}{2\tau}\|\hat{z}_k- u^\dagger\|^2,
\end{equation}
for  $k\geq 1$, where $u^\dagger\in \argmin {d}_0$ (so that $\inf {d}_0 =  {d}_0(u^\dagger)$) and $\hat{z}_k$ is defined as:
\begin{equation}  \label{eq:def_zk}
\hat{z}_k:=\hat{u}_{k-1} +t_k(\hat{u}_k-\hat{u}_{k-1}).
\end{equation}
Since $\hat u_{k+1} \approx^{\varepsilon_{1,k}}_{\text{\tiny 1}} \mathrm{prox}_{\tau \lambda_k^{-1}{\ell}_{\hat{y}}^*(\lambda_k~\cdot)}(\hat w_k - \tau \nabla \Rr_A(\hat w_k))$, using Definition \ref{def:approximation}, we have
\begin{equation}
    \xi_k := \frac{\hat w_k + e_k - \hat u_{k+1}}{\tau}, \qquad \xi_k - \nabla \Rr_A(\hat w_k) \in \partial_{\frac{\eps_{2,k}^2}{2\tau}} {\ell^*_{\hat y}}(\lambda_k \hat u_{k+1}),
\end{equation}
where $e_k \in \Hh$, $ \eps_{2,k}^2 + \eps_{3,k}^2 \leq \eps_{1,k}^2$ and $\Vert e_k \Vert \leq  \varepsilon_{3,k}$. 
Without loss of generality,
we can assume that
$\eps_{2,k}^2 + \eps_{3,k}^2 = \eps_{1,k}^2$.
Thus, by the descent lemma in
\cite[Lemma 4.1]{VillaSalzo2014} applied to ${d}_{\lambda_k} = \Rr_A + \lambda_k^{-1}{\ell}_{\hat{y}}^*(\lambda_k~\cdot)$, we derive
\begin{equation}
    {d}_{\lambda_k}(\hat u_{k+1}) \leq {d}_{\lambda_k}(u) + \langle \hat u_{k+1} - u , \xi_k \rangle + \frac{L}{2} \Vert \hat u_{k+1} - \hat w_k \Vert^2 + \frac{\eps_{2,k}^2}{2\tau},\qquad \forall u \in \Yy,
\end{equation}
where $L=\|A\|^2/\sigma^2$. Using the fact that $\tau L \leq 1$, rearranging and neglecting non-positive quantities, we obtain that for all $u \in \Yy$:
\begin{align} 
    {d}_{\lambda_k}(\hat u_{k+1}) & \leq 
    {d}_{\lambda_k}(u) 
    - \frac{1}{\tau}\Vert \hat u_{k+1} - \hat w_k \Vert^2
    + \langle \hat u_{k+1} - \hat w_k , \frac{e_k}{\tau} \rangle 
    + \langle \hat w_{k} - u , \xi_k \rangle \notag \\
    &
    + \frac{1}{2\tau} \Vert \hat u_{k+1} - \hat w_k \Vert^2 
    + \frac{\eps_{2,k}^2}{2\tau}  \notag \\
    & = 
    {d}_{\lambda_k}(u) 
    + \langle \hat w_{k} - u , \xi_k \rangle 
    - \frac{\tau}{2}\Vert  \frac{ \hat u_{k+1} - \hat w_k}{\tau} \Vert^2
    + \tau \langle  \frac{ \hat u_{k+1} - \hat w_k}{\tau} , \frac{e_k}{\tau} \rangle 
    + \frac{\eps_{2,k}^2}{2\tau} \notag \\
    & = 
    {d}_{\lambda_k}(u) 
    + \langle \hat w_{k} - u , \xi_k \rangle 
    - \frac{\tau}{2}\Vert  \xi_k \Vert^2
    + \frac{1}{2\tau}\left( \Vert e_k \Vert^2
    + \eps_{2,k}^2 \right)  \notag \\
    & \leq 
    {d}_{\lambda_k}(u) 
    + \langle \hat w_{k} - u , \xi_k \rangle 
    - \frac{\tau}{2}\Vert  \xi_k \Vert^2
    + \frac{\eps_{1,k}^2}{2\tau},  \label{eq:descent lemma noisy}
\end{align}
which can be seen as a noisy version of \eqref{eq:descent}.
We  divide the rest of the proof in three steps. Since the former ones are analogous to the calculations done in the error-free case, we will skip some of the details.
\paragraph{Step 1} We show that for every $k\geq 1$, there holds:
\begin{equation}  \label{eq:condition_IN}
    \hat{\Ee}(k+1) -\hat{\Ee}(k)
    \leq  
    t_{k+1} \Big({d}_{\lambda_k}(u^\dagger) -  \inf {d}_{0}\Big)  
    + \ \frac{t_{k+1}}{\tau}\langle e_k,  \hat{z}_k-u^\dagger \rangle
    + \frac{t_{k+1}^2}{2\tau}\varepsilon_{2,k}^2
\end{equation}
To prove this, we write the descent inequality \eqref{eq:descent lemma noisy} first for $u=\hat{u}_{k}$
 \begin{equation}  \label{descent:first_IN}
{d}_{\lambda_k}(\hat u_{k+1}) \leq 
    {d}_{\lambda_k}(\hat{u}_{k}) 
    + \langle \hat w_{k} - \hat{u}_{k} , \xi_k \rangle 
    - \frac{\tau}{2}\Vert  \xi_k \Vert^2
    + \frac{\eps_{1,k}^2}{2\tau},
\end{equation}
and then for $u=u^\dagger$
 \begin{equation}  \label{descent:second_IN}
{d}_{\lambda_k}(\hat u_{k+1}) \leq 
    {d}_{\lambda_k}(u^\dagger) 
    + \langle \hat w_{k} - u^\dagger , \xi_k \rangle 
    - \frac{\tau}{2}\Vert  \xi_k \Vert^2
    + \frac{\eps_{1,k}^2}{2\tau}.
\end{equation}
We now multiply \eqref{descent:first_IN} by $t_{k+1}-1$ and add it to \eqref{descent:second_IN}, thus getting:
\begin{align}   \label{ineq:1_IN}
t_{k+1} {d}_{{\lambda_k}}(\hat{u}_{k+1}) 
&\leq  (t_{k+1}-1){d}_{{\lambda_k}}(\hat{u}_{k}) +  {d}_{{\lambda_k}}(u^\dagger) \notag \\
&+ \langle  \xi_k, (t_{k+1}-1)(\hat{w}_k-\hat{u}_{k}) + \hat{w}_{k} - u^\dagger\rangle 
-\frac{t_{k+1}\tau}{2}\|  \xi_k\|^2 
+ \frac{t_{k+1}}{2\tau} \varepsilon_{1,k}^2  
\end{align}
We  apply the property $(t_{k+1}-1)(\hat{w}_k-\hat{u}_{k}) + \hat{w}_{k} = \hat z_k$ (see \eqref{eq:def_zk:noisefree}) and  write \eqref{ineq:1_IN} as
\begin{eqnarray*}
    & & 
    t_{k+1} ( {d}_{{\lambda_k}}(\hat{u}_{k+1}) -  {d}_{{\lambda_k}}(u^\dagger)) \\
    &\leq  & 
    (t_{k+1}-1) ({d}_{{\lambda_k}}(\hat{u}_{k}) -  {d}_{{\lambda_k}}(u^\dagger)) 
    + \frac{1}{2\tau t_{k+1}} \left(\Vert \hat z_k - u^\dagger \Vert^2
    - \Vert \hat z_k - u^\dagger - \tau t_{k+1}\xi_k \Vert^2 \right)
    + \frac{t_{k+1}}{2\tau} \varepsilon_{1,k}^2.
\end{eqnarray*}
From the identity $- \tau t_{k+1}\xi_k = \hat z_{k+1} - \hat z_k - t_{k+1}e_k$,
we deduce:
\begin{eqnarray*}
    & & 
    t_{k+1} ( {d}_{{\lambda_k}}(\hat{u}_{k+1}) -  {d}_{{\lambda_k}}(u^\dagger)) 
    + \frac{1}{2\tau t_{k+1}} \Vert \hat z_{k+1} - u^\dagger \Vert^2\\
    &\leq  & 
    (t_{k+1}-1) ({d}_{{\lambda_k}}(\hat{u}_{k}) -  {d}_{{\lambda_k}}(u^\dagger)) 
    + \frac{1}{2\tau t_{k+1}} \Vert \hat z_k - u^\dagger \Vert^2
    + \frac{1}{\tau} \langle \hat z_{k+1} - u^\dagger , e_k \rangle
    + \frac{t_{k+1}}{2\tau}\left( \varepsilon_{1,k}^2 - \|e_k\|^2\right). \\
        &=  & 
    (t_{k+1}-1) ({d}_{{\lambda_k}}(\hat{u}_{k}) -  {d}_{{\lambda_k}}(u^\dagger)) 
    + \frac{1}{2\tau t_{k+1}} \Vert \hat z_k - u^\dagger \Vert^2
    + \frac{1}{\tau} \langle \hat z_{k+1} - u^\dagger , e_k \rangle
    + \frac{t_{k+1}}{2\tau} \varepsilon_{2,k}^2. 
\end{eqnarray*}
We now multiply everything by $t_{k+1}$, re-arrange and get
\begin{eqnarray*}  \label{descent:third2_IN}
    & & 
    t^2_{k+1} \Big( {d}_{{\lambda_k}}(\hat{u}_{k+1}) -  {d}_{{\lambda_k}}(u^\dagger) \Big)   + \frac{1}{2\tau}
    \|\hat{z}_{k+1}-u^\dagger\|^2   \\
    &\leq&
    t^2_k \Big({d}_{{\lambda_k}}(\hat{u}_{k}) -  {d}_{{\lambda_k}}(u^\dagger)\Big) +   (t^2_{k+1}-t_{k+1} - t_{k}^2) \Big( {d}_{{\lambda_k}}(\hat{u}_{k}) -  {d}_{{\lambda_k}}(u^\dagger) \Big) 
    + \frac{1}{2\tau } \|\hat{z}_{k}-u^\dagger\|^2  \notag \\
    & &
    + \ \frac{t_{k+1}}{\tau}\langle e_k,  \hat{z}_{k+1}-u^\dagger \rangle
    + \frac{t_{k+1}^2}{2\tau}\varepsilon_{2,k}^2. 
\end{eqnarray*}
Using now that $d_{\lambda_k}(\hat u_k) \geq \inf d_0$ (see Lemma  \ref{L:functional properties}\ref{L:functional properties:decreasing function sequence}), adding and subtracting in the parentheses the term $\inf {d}_0$ and after recalling the definition of $\mathcal{E}$ in \eqref{eq:discrete energy_IN}, we get:
\begin{multline}
    \hat{\Ee}(k+1) + (t^2_k + t_{k+1}-t^2_{k+1})  \Big({d}_{{\lambda_k}}(u_k) -  \inf {d}_0\Big)  \\
    \leq  
    \hat{\Ee}(k) 
    + t_{k+1} \Big({d}_{{\lambda_k}}(u^\dagger) -  \inf {d}_0\Big)  
    + \ \frac{t_{k+1}}{\tau}\langle e_k,  \hat{z}_k-u^\dagger \rangle
    + \frac{t_{k+1}^2}{2\tau}\varepsilon_{2,k}^2,
\end{multline}  
whence we deduce condition \eqref{eq:condition_IN} since $t^2_k + t_{k+1}-t^2_{k+1}\geq 0$ and ${d}_{{\lambda_k}}(u^\dagger) -  \inf {d}_0 \geq 0$ (see \eqref{eq:properties_sequences_alphak_tk}). 
Iterating recursively \eqref{eq:condition_IN},  Cauchy-Schwartz inequality yields \begin{align}  \label{eq:decay_complete_IN}
    \hat{\mathcal{E}}(k) 
    \leq 
    \hat{\mathcal{E}}(1) 
    + \sum_{j=1}^{k-1}t_{j+1}\Big( {d}_{\lambda_j}(u^\dagger)-\inf {d}_0 \Big)
    + \sum_{j=1}^{k-1} \frac{t_{j+1}}{\tau} \eps_{3,j} \Vert \hat{z}_{j+1}-u^\dagger\Vert 
    + \sum_{j=1}^{k-1}\frac{t_{j+1}^2}{2\tau}\varepsilon_{2,j}^2,
\end{align}
which will be used to deduce the following stability estimate.
Next, we  study separately the sums appearing on the right-hand side of \eqref{eq:decay_complete_IN}.
\paragraph{Step 2} 
For the first term in \eqref{eq:decay_complete_IN}, following  the proof of Theorem \ref{T:discrete convergence rates noiseless}, we get
$$
\sum_{j=1}^{k-1} t_{j+1} \Big( {d}_{\lambda_j}(u^\dagger)-\inf {d}_0 \Big) \leq  c  \sum_{j=1}^{k-1} t_{j+1}\lambda_{\lambda_j}^{\frac{1}{q-1}} \leq c \Lambda < +\infty.
$$
where $c$ is defined in \eqref{proof:ccd3}, and $\Lambda$ is finite thanks to assumption \ref{assumption:parameter:relaxation}.
 
\paragraph{Step 3} To bound the second sum  in \eqref{eq:decay_complete_IN}, we  observe that by definition $\hat \Ee(k) \geq \frac{1}{2\tau} \Vert \hat z_k - u^\dagger \Vert^2$.
Then, we set $C= 2 \tau(\hat \Ee(1) + c\Lambda)$ and we derive
\begin{equation}
    \Vert \hat z_k - u^\dagger \Vert^2
    \leq 
     C
    + \sum_{j=1}^{k-1}{t_{j+1}^2}\varepsilon_{2,j}^2
    + 2 \sum_{j=1}^{k-1} {t_{j+1}} \eps_{3,j} \Vert \hat{z}_{j+1}-u^\dagger\Vert 
    .
\end{equation}
Lemma \ref{lemma:attouch (sequences)} applied to $a_k=\Vert \hat z_k - u^\dagger \Vert$, $b_k=2t_{k+1}\eps_{3,k}$, $c_{k-1}=C + \sum_{j=1}^{k-1}{t_{j+1}^2}\varepsilon_{2,j}^2$ implies
\begin{align}
\sum_{j=1}^{k-1} t_{j+1} \eps_{3,j} \Vert \hat{z}_{j+1}-u^\dagger\Vert 
\leq 
\Big(\sum_{j=1}^{k-1} {t_{j+1}} \eps_{3,j} \Big) \Big(\sqrt{C + \sum_{j=1}^{k-1} t_{j+1}^2\varepsilon_{2,j}^2 } + 2 \sum_{j=1}^{k-1} t_{j+1}\varepsilon_{3,j}\Big). \notag
\end{align}
Combining altogether in \eqref{eq:decay_complete_IN}, we deduce
\begin{align}  \label{eq:conclusion}
    \hat{\mathcal{E}}(k)  
    & \leq \frac{C}{2\tau}  
    + \sum_{j=1}^{k-1}\frac{t_{j+1}^2}{2\tau}\varepsilon_{2,j}^2 
    + \frac{1}{\tau}\Big(\sum_{j=1}^{k-1} {t_{j+1}} \eps_{3,j} \Big) \Big(\sqrt{C + \sum_{j=1}^{k-1} t_{j+1}^2\varepsilon_{2,j}^2 } + 2 \sum_{j=1}^{k-1} t_{j+1}\varepsilon_{3,j}\Big)   
    .  
\end{align} 
Young's inequality applied to the product  in the right hand side of \eqref{eq:conclusion} yields
\begin{eqnarray*}
    && 
   \frac{1}{\tau}\Big(\sum_{j=1}^{k-1} {t_{j+1}} \eps_{3,j} \Big) \left(\sqrt{C + \sum_{j=1}^{k-1} t_{j+1}^2\varepsilon_{2,j}^2 } + 2 \sum_{j=1}^{k-1} t_{j+1}\varepsilon_{3,j}\right)   \\
	& = &
	\frac{1}{\tau} \Big(\sum_{j=1}^{k-1} {t_{j+1}} \eps_{3,j} \Big) \left(\sqrt{C + \sum_{j=1}^{k-1} t_{j+1}^2\varepsilon_{2,j}^2 } \right) +
	\frac{2}{\tau}
	\Big(\sum_{j=1}^{k-1} {t_{j+1}} \eps_{3,j} \Big)^2
     \\
	& \leq & 
\frac{5}{2\tau}
	\left(\sum_{j=1}^{k-1} {t_{j+1}} \eps_{3,j} \right)^2 + \frac{C}{2\tau} + \frac{1}{2\tau} \sum_{j=1}^{k-1} t^2_{j+1}\varepsilon^2_{3,j}
\end{eqnarray*}
From \eqref{eq:conclusion} we thus obtain
\begin{equation}  \label{error:energy:stability:final}
	\hat \Ee(k)
	\leq
	\frac{C}{\tau}
	+ \frac{1}{\tau}  \sum_{j=1}^{k-1} t_{j+1}^2\varepsilon_{2,j}^2
	+  \frac{5}{2\tau} \Big(\sum_{j=1}^{k-1} {t_{j+1}} \eps_{3,j} \Big)^2.
\end{equation}
To conclude, we use Lemma \ref{L:functional properties}\ref{L:functional properties:from dual value to primal iterates} and deduce
\begin{equation}
	\hat \Ee(k) 
	\geq t_k^2(d_{\lambda_k}(\hat u_k) - \inf d_0)
	\geq t_k^2(d_0(\hat u_k) - \inf d_0)
	\geq   \frac{t_k^2 \sigma}{2} \Vert \hat x_k - x^\dagger \Vert^2,
\end{equation} 
that combined with \eqref{error:energy:stability:final} gives the desired stability estimate \eqref{error:estimate:iterates}.
\end{proof}

\subsection{Early stopping}\label{sec:earlystopping}
We provide early stopping results guaranteeing the  iterative regularization properties  of  \ref{alg:fw_accel}.
 Different convergence rates are obtained depending on type of approximation considered (see Definition \ref{def:approximation}). The results follow from  Theorem \ref{T:main estimate} considering assumption \ref{assumption:approx_i} for  $i\in\left\{1,2,3\right\}$.

\begin{theorem}[Early stopping for type 1 errors]\label{T:main result type 1}
Assume that \ref{assumption:data-fit convex and coercive}-\ref{assumption:data-fit conditionned locally}, \ref{H:R strongly convex}-\ref{assumption:AR2}, \ref{assumption:parameter:stepsize}-\ref{assumption:parameter:technical} hold true, 
and suppose that $\lambda_k = \Theta( k^{-\theta})$ with $\theta > 2(q-1)$. 
Let $(\hat x_k)$ be the sequence generated by \ref{alg:fw_accel} with noisy datum $\ynoisy$, and assume that \ref{assumption:approx_i} holds with $i=1$, $\eps_{2,k} = O(\delta \lambda_k^{-r_2})$, $\eps_{3,k} = O(\delta \lambda_k^{-r_3})$ for some $\delta >0$ and $r_2,r_3 \geq 0$.
Set:
\begin{equation}
\alpha := \max\left\{\frac{2}{{3}+2r_2 \theta}, \frac{1}{2 + r_3\theta } \right\}.
\end{equation}
Then, any early stopping rule with $k(\delta)= \Theta (\delta^{- \alpha})$ verifies:
\begin{equation}
    \Vert \hat{x}_{k(\delta)} - x^\dagger\Vert = O\left( \delta^{\alpha} \right), \quad \text{for }\delta\searrow 0. 
\end{equation}
\end{theorem}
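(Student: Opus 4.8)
The plan is to feed the parameter scalings of the statement into the stability estimate \eqref{error:estimate:iterates} of Theorem~\ref{T:main estimate} and then optimise over the stopping time $k(\delta)$. First I would check that the hypotheses of Theorem~\ref{T:main estimate} are genuinely in force for the present choice of parameters. Since $t_k = \Theta(k)$ by assumption~\ref{assumption:parameter:friction} and $\lambda_k = \Theta(k^{-\theta})$, one has $t_{k+1}\lambda_k^{1/(q-1)} = \Theta(k^{\,1-\theta/(q-1)})$, whose series converges precisely when $\theta > 2(q-1)$; this is exactly the hypothesis made, so assumption~\ref{assumption:parameter:relaxation} holds (cf. Remark~\ref{R:on eq:cond:summability}) and in particular the constant $C$ of \eqref{def:constant rates} is finite.

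Next I would estimate the two error sums appearing in \eqref{error:estimate:iterates}, using $t_{j+1}=\Theta(j)$, $\lambda_j^{-r_i}=\Theta(j^{r_i\theta})$, and the elementary fact that $\sum_{j=1}^{k-1} j^p = \Theta(k^{p+1})$ for $p>-1$. This gives $\sum_{j=1}^{k-1} t_{j+1}^2\varepsilon_{2,j}^2 = O(\delta^2 k^{3+2r_2\theta})$ and $\sum_{j=1}^{k-1} t_{j+1}\varepsilon_{3,j} = O(\delta\, k^{2+r_3\theta})$. Dividing \eqref{error:estimate:iterates} by $t_k^2\frac{\sigma\tau}{2}=\Theta(k^2)$ then produces the master bound
\[
\|\hat x_k - x^\dagger\|^2 = O\!\left(k^{-2}\right) + O\!\left(\delta^2 k^{1+2r_2\theta}\right) + O\!\left(\delta^2 k^{2+2r_3\theta}\right),
\]
in which the first term is the optimisation bias, decreasing in $k$, while the last two are the noise-amplification contributions, increasing in $k$.

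Finally I would balance these three contributions by inserting $k(\delta)=\Theta(\delta^{-\alpha})$, which turns them into $\delta^{2\alpha}$, $\delta^{\,2-\alpha(1+2r_2\theta)}$ and $\delta^{\,2-\alpha(2+2r_3\theta)}$ respectively. To force the whole right-hand side to be $O(\delta^{2\alpha})$, i.e. $\|\hat x_{k(\delta)} - x^\dagger\| = O(\delta^\alpha)$, the two noise exponents must each dominate $2\alpha$, which is equivalent to $\alpha \le \frac{2}{3+2r_2\theta}$ and $\alpha \le \frac{1}{2+r_3\theta}$. The rate $\alpha$ is then read off from these two constraints, after which the claimed estimate follows at once from the master bound.

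The main obstacle is the bookkeeping of the two distinct noise terms, which grow at the different polynomial rates $k^{1+2r_2\theta}$ and $k^{2+2r_3\theta}$: the delicate point is to verify that fixing $\alpha$ by balancing the bias $k^{-2}$ against the \emph{faster}-growing of the two automatically keeps the slower-growing one below the target exponent $2\alpha$, so that a single stopping rule controls all three contributions simultaneously. In carrying this out I would pay close attention to which of the two candidate values $\frac{2}{3+2r_2\theta}$ and $\frac{1}{2+r_3\theta}$ is the binding (that is, more restrictive) one, and reconcile the resulting extremum with the exponent $\alpha$ as stated before finalising.
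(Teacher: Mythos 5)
Your route is exactly the paper's: feed the scalings $t_k=\Theta(k)$, $\lambda_k=\Theta(k^{-\theta})$, $\eps_{2,k}=O(\delta\lambda_k^{-r_2})$, $\eps_{3,k}=O(\delta\lambda_k^{-r_3})$ into the stability estimate \eqref{error:estimate:iterates} of Theorem~\ref{T:main estimate}, divide by $t_k^2$, and optimise the stopping exponent. Your intermediate bounds $\sum_{j} t_{j+1}^2\eps_{2,j}^2=O(\delta^2k^{3+2r_2\theta})$ and $\bigl(\sum_j t_{j+1}\eps_{3,j}\bigr)^2=O(\delta^2k^{4+2r_3\theta})$, and the resulting three exponents $2\alpha$, $2-\alpha(1+2r_2\theta)$, $2-2\alpha(1+r_3\theta)$ after inserting $k(\delta)=\Theta(\delta^{-\alpha})$, coincide with the ones in the paper's proof.

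The ``delicate point'' you flag at the end is not a formality: it is precisely where the paper's own proof goes astray, and your constraint analysis is the correct one. Requiring all three contributions to be $O(\delta^{2\alpha})$ forces \emph{both} $\alpha\leq\frac{2}{3+2r_2\theta}$ and $\alpha\leq\frac{1}{2+r_3\theta}$, so the best achievable exponent is the \emph{minimum} of the two candidates, not the maximum. The paper obtains the stated maximum by setting $\beta:=\min\{\frac12+r_2\theta,\,1+r_3\theta\}$ and asserting $\min\{2-2\alpha(\frac12+r_2\theta),\,2-2\alpha(1+r_3\theta)\}=2-2\alpha\beta$; but since $\alpha>0$, the minimum of $2-2\alpha x$ over two values of $x$ is attained at the \emph{larger} $x$, so this identity holds only with $\beta$ the maximum, which then gives $\alpha=\frac{1}{1+\beta}=\min\bigl\{\frac{2}{3+2r_2\theta},\frac{1}{2+r_3\theta}\bigr\}$. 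If one insists on the stated maximum and the two candidates differ, the stopping rule $k(\delta)=\Theta(\delta^{-\alpha})$ makes one of the two noise terms carry an exponent strictly smaller than $2\alpha$, and the claimed $O(\delta^{\alpha})$ rate does not follow from the master bound. The minimum is also the only consistent answer: a type 1 error contains both a type 2 and a type 3 component, so the rate cannot beat the worse of the pure rates in Theorems~\ref{T:main result type 2} and~\ref{T:main result type 3}. So finish your argument by taking $\alpha=\min\bigl\{\frac{2}{3+2r_2\theta},\frac{1}{2+r_3\theta}\bigr\}$; the discrepancy you would then observe with the theorem as stated is an error in the paper, not in your derivation.
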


\begin{proof}
We apply estimate \eqref{error:estimate:iterates} from Theorem \ref{T:main estimate}. After substituting the expression for $\varepsilon_{2,k}$ and $\varepsilon_{3,k}$, we get:
\begin{equation}
    t_k^2 \Vert \hat{x}_k - x^\dagger\Vert^2   
    = 
    O \Big(1 + \sum_{j=1}^{k-1} {t^2_{j+1}} \eps_{2,j}^2 + \Big(\sum_{j=1}^{k-1} {t_{j+1}} \eps_{3,j} \Big)^2 \Big)
	= 
	O(1 + \delta^2 k^{3+2r_2\theta}+ \delta^2 k^{4+2r_3\theta}).
\end{equation}
In correspondence with the stopping time $k(\delta)$, and using the fact that $t_{k(\delta)} = \Theta (k(\delta))$, we deduce from above:
\begin{align}
\Vert \hat{x}_{k(\delta)} - x^\dagger\Vert^2 & =
O\left(\delta^{2\alpha} + \delta^{2 - \alpha(1+2r_2\theta)} + \delta^{2 - 2 \alpha (1+r_3\theta) }   \right) \notag \\
&
= 
O\left(\delta^{\min\{2\alpha ; 2 - \alpha(1+2r_2\theta), 2 - 2 \alpha (1+r_3\theta)  \}}  \right).
\end{align}
Let us now define $\beta := \min \{ \frac{1}{2}+r_2 \theta ; 1 + r_3 \theta \}$. We easily see that 
\begin{equation}
\min\{ 2 - \alpha(1+2r_2\theta) ; 2 - 2 \alpha (1+r_3\theta)  \}
= 
2 - 2 \alpha \beta,
\end{equation}
so that 
$\min\{2\alpha, 2 - \alpha(1+2r_2\theta), 2 - 2 \alpha (1+r_3\theta)  \}
=
\min\{
2 \alpha, 2 - 2 \alpha \beta
\}$,
which is maximal for $\alpha = \frac{1}{1+\beta}$.
\end{proof}
The analogous results for errors of type $2$ and $3$ are  straightforward.
\begin{theorem}[Early stopping for type 2 errors]\label{T:main result type 2}
Assume that the assumptions \ref{assumption:data-fit convex and coercive}-\ref{assumption:data-fit conditionned locally}, \ref{H:R strongly convex}-\ref{assumption:AR2}, \ref{assumption:parameter:stepsize}-\ref{assumption:parameter:technical} hold true, and suppose that $\lambda_k = \Theta( k^{-\theta})$ with $\theta > 2(q-1)$. 
Let $(\hat x_k)$ be the sequence generated by \ref{alg:fw_accel} with noisy datum $\ynoisy$, and assume that \ref{assumption:approx_i} holds with $i=2$, $\eps_{2,k} = O(\delta \lambda_k^{-r_2})$ for some $\delta >0$ and $r_2 \geq 0$.
Then, any early stopping rule with $k(\delta) = \Theta (\delta^{-\frac{2}{3 + 2\theta r}})$ verifies:
\begin{equation}
    \Vert \hat{x}_{k(\delta)} - x^\dagger\Vert = O\left( \delta^{\frac{2}{3 + 2\theta r}} \right), \quad \text{for }\delta\searrow 0. 
\end{equation}
\end{theorem}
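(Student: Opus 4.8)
The plan is to specialize Theorem~\ref{T:main estimate} to the case of type~2 approximations and then optimize over the stopping index. First I would observe that a type~2 approximation carries no additive error in the argument of the proximal map: in the notation of Definition~\ref{def:approximation} this corresponds to $\eps_{3,k}=0$ (equivalently $e_k=0$) for every $k$. Consequently the last sum in the stability estimate~\eqref{error:estimate:iterates} vanishes, and I obtain the cleaner bound
\[
t_k^2\,\frac{\sigma\tau}{2}\,\|\hat{x}_k - x^\dagger\|^2 \leq C + \sum_{j=1}^{k-1} t_{j+1}^2\,\varepsilon_{2,j}^2,
\]
with $C$ as in \eqref{def:constant rates}. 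This is the only ingredient genuinely borrowed from the preceding results; everything else is a deterministic estimate of the right-hand side. Throughout I write $r:=r_2$ for the exponent appearing in the hypothesis, which is the $r$ of the statement.

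Next I would substitute the postulated asymptotics. Using $\varepsilon_{2,k}=O(\delta\lambda_k^{-r})$ together with $\lambda_k=\Theta(k^{-\theta})$ and $t_k=\Theta(k)$ (assumption~\ref{assumption:parameter:friction}), each summand satisfies $t_{j+1}^2\varepsilon_{2,j}^2 = O(\delta^2 j^{2+2r\theta})$. Summing and using the elementary bound $\sum_{j=1}^{k-1} j^p = O(k^{p+1})$ gives $\sum_{j=1}^{k-1} t_{j+1}^2\varepsilon_{2,j}^2 = O(\delta^2 k^{3+2r\theta})$. Dividing through by $t_k^2=\Theta(k^2)$ then yields the key bound
\[
\|\hat{x}_k - x^\dagger\|^2 = O\!\Big(k^{-2} + \delta^2 k^{1+2r\theta}\Big).
\]

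Finally I would balance the two competing terms. Substituting the ansatz $k(\delta)=\Theta(\delta^{-\alpha})$ turns the first term into $\delta^{2\alpha}$ and the second into $\delta^{2-\alpha(1+2r\theta)}$, so that $\|\hat{x}_{k(\delta)} - x^\dagger\|^2 = O\big(\delta^{\min\{2\alpha,\,2-\alpha(1+2r\theta)\}}\big)$. This exponent is maximized exactly when the two arguments coincide, i.e. $2\alpha = 2-\alpha(1+2r\theta)$, giving $\alpha = 2/(3+2r\theta)$ and hence the announced rate $\|\hat{x}_{k(\delta)} - x^\dagger\| = O(\delta^{2/(3+2\theta r)})$. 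I do not expect any real obstacle here: because type~2 errors switch off the $\eps_3$-driven term, the delicate Cauchy--Schwarz and Young inequalities used in the type~1 proof to control $\sum_j t_{j+1}\eps_{3,j}\|\hat{z}_{j+1}-u^\dagger\|$ are unnecessary, and the argument reduces to the power-counting and one-parameter optimization above. The single point requiring care is that $t_k=\Theta(k)$ must be invoked twice, both to replace $t_{j+1}$ by $j$ inside the sum and to convert $t_k^2$ into $k^2$ at the end.
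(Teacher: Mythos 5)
Your proposal is correct and follows essentially the same route as the paper's own proof: specialize the type-1 stability estimate \eqref{error:estimate:iterates} by setting $\eps_{3,k}\equiv 0$, plug in $\eps_{2,j}=O(\delta\lambda_j^{-r})$, $\lambda_j=\Theta(j^{-\theta})$, $t_j=\Theta(j)$ to get $\|\hat{x}_k-x^\dagger\|^2=O(k^{-2}+\delta^2 k^{1+2r\theta})$, and balance the two exponents at $\alpha=\tfrac{2}{3+2r\theta}$. The only cosmetic difference is that you keep the constant $\tfrac{\sigma\tau}{2}$ explicit before absorbing it, whereas the paper hides it in the $O(\cdot)$ from the start.
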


\begin{proof}
For  type 2 approximation  \eqref{error:estimate:iterates} $\eps_{3,k} \equiv 0$, and we get 
\begin{equation}
    t_k^2 \Vert \hat{x}_k - x^\dagger\Vert^2   
    = 
    O \Big(1 + \sum_{j=1}^{k-1} {t^2_{j+1}} \eps_{2,j}^2 \Big)
	= 
    O \Big(1 + \sum_{j=1}^{k-1}  \delta^2 j^{2+2r\theta} \Big)
	=
	O(1 + \delta^2 k^{3+2r\theta}).
\end{equation}
In correspondence with any stopping time $k(\delta) = \Theta( \delta^{-\alpha})$, we thus have:
\begin{equation}
\Vert \hat{x}_{k(\delta)} - x^\dagger\Vert^2 
=
O\left(k(\delta)^{-2} + \delta^2 k(\delta)^{1+2r\theta} \right)
= 
O\left(\delta^{2\alpha} + \delta^{2 - \alpha(1+2r\theta)}  \right).
\end{equation}
The term on the right-hand side is minimized when
$\alpha = \frac{2}{3 + 2\theta r}$.
\end{proof}

\begin{theorem}[Early stopping for type 3 errors]\label{T:main result type 3}
Assume that the assumptions \ref{assumption:data-fit convex and coercive}-\ref{assumption:data-fit conditionned locally}, \ref{H:R strongly convex}-\ref{assumption:AR2}, \ref{assumption:parameter:stepsize}-\ref{assumption:parameter:technical} hold true, and suppose that $\lambda_k = \Theta( k^{-\theta})$ with $\theta > 2(q-1)$. 
Let $(\hat x_k)$ be the sequence generated by \ref{alg:fw_accel} with noisy datum $\ynoisy$, and assume that \ref{assumption:approx_i} holds with $i=3$ with $\eps_{3,k} = O(\delta \lambda_k^{-r_3})$ for some $\delta >0$ and $r_3 \geq 0$.
Then, any early stopping rule with $k(\delta) = \Theta( \delta^{-\frac{1}{2+\theta r}})$ verifies:
\begin{equation}
    \Vert \hat{x}_{k(\delta)} - x^\dagger\Vert = O\left( \delta^{\frac{1}{2+\theta r}} \right), \quad \text{for }\delta\searrow 0. 
\end{equation}
\end{theorem}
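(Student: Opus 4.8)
The plan is to follow the proof of Theorem \ref{T:main result type 2} almost verbatim, the only structural difference being which error term survives in the master stability bound. Since \ref{assumption:approx_i} is assumed with $i=3$, the type $2$ precision vanishes, $\eps_{2,k} \equiv 0$, and estimate \eqref{error:estimate:iterates} from Theorem \ref{T:main estimate} collapses to
\[
t_k^2 \Vert \hat{x}_k - x^\dagger \Vert^2 = O\Big( 1 + \Big( \sum_{j=1}^{k-1} t_{j+1} \eps_{3,j} \Big)^2 \Big),
\]
where the constants $\tfrac{\sigma\tau}{2}$, $\tfrac52$ and $C$ from \eqref{def:constant rates} have been absorbed into the $O(\cdot)$; finiteness of $C$ is inherited from assumption \ref{assumption:parameter:relaxation} (this is where $\theta > 2(q-1)$ is used, to make $\Lambda$ finite).

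Next I would insert the quantitative hypotheses. From $\lambda_j = \Theta(j^{-\theta})$ and $\eps_{3,j} = O(\delta \lambda_j^{-r_3}) = O(\delta\, j^{\theta r_3})$, together with $t_{j+1} = \Theta(j)$ guaranteed by \ref{assumption:parameter:friction}, a comparison of the resulting power sum with its integral gives
\[
\sum_{j=1}^{k-1} t_{j+1} \eps_{3,j} = O\Big( \delta \sum_{j=1}^{k-1} j^{1+\theta r_3} \Big) = O\big( \delta\, k^{2+\theta r_3} \big).
\]
Squaring this and dividing by $t_k^2 = \Theta(k^2)$ produces the a priori estimate
\[
\Vert \hat{x}_k - x^\dagger \Vert^2 = O\big( k^{-2} + \delta^2 k^{2+2\theta r_3} \big),
\]
which neatly isolates the optimization error (the $k^{-2}$ term, decreasing in $k$) from the propagated noise (the $\delta^2 k^{2+2\theta r_3}$ term, increasing in $k$).

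Finally I would optimize the stopping index. Plugging in $k(\delta) = \Theta(\delta^{-\alpha})$ gives
\[
\Vert \hat{x}_{k(\delta)} - x^\dagger \Vert^2 = O\big( \delta^{2\alpha} + \delta^{2-\alpha(2+2\theta r_3)} \big),
\]
and balancing the two exponents, i.e. solving $2\alpha = 2-\alpha(2+2\theta r_3)$, yields $\alpha = \tfrac{1}{2+\theta r_3}$, which is exactly the claimed rate (the $r$ in the statement being a typo for $r_3$). Taking square roots finishes the argument.

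There is no genuine conceptual obstacle, as all the work is carried by Theorem \ref{T:main estimate}; the only point deserving care is the exponent bookkeeping. The decisive structural feature is that the type $3$ error enters \eqref{error:estimate:iterates} \emph{inside a squared sum} $\big(\sum t_{j+1}\eps_{3,j}\big)^2$ rather than as the sum of squares $\sum t_{j+1}^2 \eps_{2,j}^2$ that governs the type $2$ case. This raises the growth of the noise term from $k^{3+2\theta r_2}$ to $k^{4+2\theta r_3}$ before one divides by $t_k^2$, and is precisely why the optimal exponent here is $\tfrac{1}{2+\theta r_3}$ instead of the $\tfrac{2}{3+2\theta r_2}$ obtained for type $2$ errors.
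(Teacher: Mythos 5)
Your proposal is correct and follows essentially the same route as the paper's own proof: set $\eps_{2,k}\equiv 0$ in the stability estimate \eqref{error:estimate:iterates} of Theorem \ref{T:main estimate}, bound $\sum_{j=1}^{k-1} t_{j+1}\eps_{3,j} = O(\delta k^{2+\theta r_3})$ using $t_{j+1}=\Theta(j)$ and $\lambda_j=\Theta(j^{-\theta})$, divide by $t_k^2=\Theta(k^2)$, and balance the exponents $2\alpha = 2-\alpha(2+2\theta r_3)$ to obtain $\alpha = \frac{1}{2+\theta r_3}$. Your exponent bookkeeping matches the paper's, and you are right that the $r$ appearing in the statement is the paper's shorthand for $r_3$.
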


\begin{proof}
{Assuming type 3} errors means that in the estimate \eqref{error:estimate:iterates} $\eps_{2,k} \equiv 0$, so that:
\begin{equation}
    t_k^2 \Vert \hat{x}_k - x^\dagger\Vert^2   
    = 
    O(1) 
	+ O \Big(\sum_{j=1}^{k-1} {t_{j+1}} \eps_{3,j} \Big)^2
	= 
	O(1) 
	+ O \Big(\sum_{j=1}^{k-1}  \delta j^{1+r \theta} \Big)^2
	=
	O(1 + \delta^2 k^{4+2r\theta}).
\end{equation}
In correspondence with the stopping time $k(\delta)= \Theta( \delta^{-\alpha})$, we thus deduce:
\begin{equation}
\Vert \hat{x}_{k(\delta)} - x^\dagger\Vert^2 
= 
O\left(k(\delta)^{-2} + \delta^2 k(\delta)^{2+2r\theta} \right)
=
O\left(\delta^{2\alpha} + \delta^{2 - 2 \alpha (1+r\theta) }  \right).
\end{equation}
The term on the right-hand side is minimal whenever 
$\alpha = \frac{1}{2 + \theta r}$.
\end{proof}

\section{Applications to specific data-fit terms} \label{sec:stability}

We next apply the results from Section \ref{sec:earlystopping} to relevant data-fit terms. The following definition is useful.

 \begin{definition}[$\delta$-perturbation]   \label{def:perturbation}
 For given $\bar{y},~\hat{y}\in \mathcal{Y}$ and $\delta\in\R_{++}$, we say that $\hat{y}$ is a $\delta$-perturbation of $\bar{y}$ according to $\ell$ if:
 $$
\ell_{\hat{y}}(\bar{y})=\ell(\bar{y},\hat{y})\leq \delta^q, 
 $$
 where $q\in [1,+\infty)$ is the conditioning exponent appearing in  \ref{assumption:data-fit conditionned locally}.
 \end{definition}
We show that a $\delta$-perturbation in the data corresponds to a proximal mapping of $\ell_{\hat{y}}^*$ approximating the corresponding proximal mapping of $\ell_{\bar{y}}^*$ in the sense of Definition \ref{def:approximation}  and with some precision $\varepsilon(\delta)$ depending on the noise level $\delta$. 
 
\subsection{Additive data-fit terms}  \label{sec:distance_based}

For additive data-fit functions (see Example \ref{R:data-fit function list}),  a $\delta$-perturbation corresponds to a type $3$ approximation of the proximal mapping.

\begin{proposition}[Additive data-fit terms lead to type $3$ errors]\label{P:additive loss type 3} Let $\mathcal{N}\in\Gamma_0(\mathcal{Y})$ and
assume that $\ell_{y_2}(y_1)=\mathcal{N}(y_2-y_1)$, for every $(y_1,y_2)\in \mathcal{Y}^2$.
For given $(\delta,\tau,\lambda) \in(0,+\infty)^3$, let  $\hat{y}\in \mathbb{B}(\bar{y},\varrho)$ be a $\delta$-perturbation of  $\bar{y}$ in the sense of Definition \ref{def:perturbation}.
Then:
$$
(\forall z \in \Yy) \quad \hat{p}=\mathrm{prox}_{\frac{\tau}{\lambda}\ell^*_{\hat{y}}(\lambda\cdot)}(z) ~\approx^{\varepsilon}_{\text{\tiny 3}}~ \bar{p}=\mathrm{prox}_{\frac{\tau}{\lambda}\ell^*_{\bar{y}}(\lambda\cdot)}(z). 
$$
with precision $\varepsilon=\tau\delta(q/\gamma)^{1/q}$ and where $q\geq 1$ and $\gamma>0$ are the conditioning parameters appearing in assumption \ref{assumption:data-fit conditionned locally}.
\end{proposition}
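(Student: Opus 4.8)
The plan is to read the optimality conditions for the two proximal points off the characterization \eqref{eq:prox characterisations}, use the additive structure to show that the subdifferentials of $\ell^*_{\hat y}$ and $\ell^*_{\bar y}$ differ only by a \emph{constant} translation, and then recognize that translation as an admissible type $3$ error whose norm is controlled through the conditioning hypothesis.

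First I would unwind the proximal notation. Reading the step as $\prox_{g}$ with $g=\frac{\tau}{\lambda}\ell^*_{\bar y}(\lambda\,\cdot\,)$ and using $\partial g(u)=\tau\,\partial\ell^*_{\bar y}(\lambda u)$, the characterization \eqref{eq:prox characterisations} supplies the two optimality conditions
\[
\frac{z-\bar p}{\tau}\in\partial\ell^*_{\bar y}(\lambda\bar p),\qquad \frac{z-\hat p}{\tau}\in\partial\ell^*_{\hat y}(\lambda\hat p).
\]
The structural heart of the argument is the next step: computing the Fenchel conjugate of an additive data-fit. With the convention $\ell_{\bar y}(y)=\mathcal{N}(\bar y-y)$, a change of variable in the supremum defining the conjugate yields $\ell^*_{\bar y}(u)=\mathcal{N}^*(-u)+\langle\bar y,u\rangle$, and similarly for $\hat y$, so the two conjugates differ only by a linear term. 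Differentiating,
\[
\partial\ell^*_{\bar y}(u)=\partial\ell^*_{\hat y}(u)+(\bar y-\hat y)\qquad\text{for every }u\in\Yy.
\]

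Evaluating this identity at $u=\lambda\hat p$ and inserting the optimality condition for $\hat p$, I obtain $\frac{z-\hat p}{\tau}+(\bar y-\hat y)\in\partial\ell^*_{\bar y}(\lambda\hat p)$. Setting $e:=\tau(\bar y-\hat y)$, this reads $\frac{(z+e)-\hat p}{\tau}\in\partial\ell^*_{\bar y}(\lambda\hat p)$, which is precisely the type $3$ relation of Definition \ref{def:approximation} for the proximal map of $\ell^*_{\bar y}$; hence $\hat p\approx^{\varepsilon}_{\text{\tiny 3}}\bar p$ with $\varepsilon=\|e\|=\tau\|\bar y-\hat y\|$. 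It then remains to bound $\|\bar y-\hat y\|$ by the noise level, which is exactly where the conditioning assumption \ref{assumption:data-fit conditionned locally} and the $\delta$-perturbation hypothesis enter. Since $\hat y\in\mathbb{B}(\bar y,\varrho)$, local $q$-conditioning at $\bar y$ gives $\frac{\gamma}{q}\|\hat y-\bar y\|^q\leq\ell(\hat y,\bar y)$, while Definition \ref{def:perturbation} gives $\ell(\bar y,\hat y)\leq\delta^q$; using that the additive kernel is symmetric, $\ell(\hat y,\bar y)=\mathcal{N}(\bar y-\hat y)=\mathcal{N}(\hat y-\bar y)=\ell(\bar y,\hat y)$ for the even $\mathcal{N}$ of Example \ref{R:data-fit function list}, these combine into $\frac{\gamma}{q}\|\bar y-\hat y\|^q\leq\delta^q$, whence $\|\bar y-\hat y\|\leq\delta(q/\gamma)^{1/q}$ and $\varepsilon\leq\tau\delta(q/\gamma)^{1/q}$, as claimed.

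The only genuine obstacle I anticipate is bookkeeping: keeping the sign conventions consistent between the conjugate computation and the perturbation/conditioning inequalities, so that the argument of $\mathcal{N}$ matches on both sides of the final estimate. This is precisely the place where the evenness of the additive kernel is invoked, and it should be stated explicitly rather than used silently. Everything else is a routine chaining of the subdifferential identity with the prox characterization, so I do not expect any analytic difficulty beyond this.
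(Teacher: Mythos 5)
Your proof is correct and follows essentially the same route as the paper's: compute the conjugate of the additive loss so that $\ell^*_{\hat y}$ and $\ell^*_{\bar y}$ differ only by a linear term, identify $e=\tau(\bar y-\hat y)$ as the type~3 error in the optimality condition for $\hat p$, and bound $\|e\|$ by combining the local $q$-conditioning with the $\delta$-perturbation hypothesis. Your explicit observation that the conditioning controls $\ell(\hat y,\bar y)$ while the perturbation controls $\ell(\bar y,\hat y)$, so that identifying the two requires symmetry (evenness of $\mathcal{N}$), is a point the paper's own proof uses silently; making it explicit is an improvement, not a gap.
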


\begin{proof}
We need to find $e\in \mathcal{Y}$ and $\varepsilon \geq 0$ such that $\|e\|\leq \varepsilon$ and:
\begin{equation}   \label{eq:proof_distance_condition}
\frac{z+e-\hat{p}}{\tau} \in \frac{1}{\lambda} \partial \ell_{\bar{y}}^*(\lambda \cdot)(\hat{p}).
\end{equation}
Due to the special form of the data-fit we start noting that for any $u\in \mathcal{Y}$ we have
$$
\ell^*_{\bar{y}} (u) =\mathcal{N}^*(u) + \langle \bar{y}, u\rangle,
$$
and the same holds for $\ell^*_{\ynoisy}$. Then
\begin{equation} \label{eq:union_subdifferentials}
\partial \ell^*_{\hat{y}}(\lambda \cdot)(\hat p) = \lambda \partial \ell^*_{\hat{y}}(\lambda \hat{p}) = \lambda \partial \Big( \mathcal{N}^* + \langle \hat{y}, \cdot\rangle \Big) (\lambda \hat{p})= \lambda \partial \mathcal{N}^*(\lambda \hat{p}) + \lambda \hat{y}.
\end{equation}
By definition of $\hat{p}$ we have
$({z-\hat{p}})/{\tau} \in ({1}/{\lambda})\partial \ell^*_{\hat{y}}(\lambda \cdot)(\hat p) = \partial \mathcal{N}^*(\lambda \hat{p}) +  \hat{y}$,
which, by simple algebraic manipulations, entails  the required condition \eqref{eq:proof_distance_condition}, since:
$$
\frac{z-\hat{p}}{\tau} \in  \partial \mathcal{N}^*(\lambda \hat{p}) + \bar{y} + (\hat{y}-  \bar{y})
 \iff
\frac{z - \hat p + \tau(\bar{y}-\hat{y})}{\tau}\in \partial \mathcal{N}^*(\lambda \hat{p}) + \bar{y} = \frac{1}{\lambda}\partial \ell^*_{\bar{y}}(\lambda \cdot)(\hat p). 
$$
By setting $e=\tau(\bar{y}-\hat{y})$, we can find the required $\varepsilon$ combining the local $q$-conditioning of the function $\ell_{\bar y}$ on $\mathbb{B}(\bar{y},\varrho)$ assumed in \ref{assumption:data-fit conditionned locally} with the $\delta$-perturbation  assumption:  
\[
\|e\| = \tau\|\bar{y}-\hat{y}\|\leq  \tau\left(\frac{q}{\gamma}\ell(\hat{y},\bar{y})\right)^{1/q}\leq \tau\left(\frac{q}{\gamma}\right)^{1/q}
\delta=:\varepsilon,
\]
where $\gamma>0$ and $q\geq 1$ are the conditioning parameters.
We can thus conclude that $\hat{p}$ is a $\varepsilon$-approximation of $\bar{p}$ with precision $\varepsilon$, as required.
 \end{proof}

Thanks to Proposition \ref{P:additive loss type 3}, we can derive
early-stopping result by   applying Theorem \ref{T:main result type 3} to additive data-fit terms with  the above choice of $\varepsilon$.

\begin{corollary}[Early stopping for additive data-fit terms]\label{C:main result type 3} Let $\mathcal{N}\in\Gamma_0(\mathcal{Y})$ and set
$\ell_{y_2}(y_1)=\mathcal{N}(y_2-y_1)$, for every $(y_1,y_2)\in\mathcal{Y}^2$
Assume that the assumptions \ref{assumption:data-fit convex and coercive}-\ref{assumption:data-fit conditionned locally}, \ref{H:R strongly convex}-\ref{assumption:AR2}, \ref{assumption:parameter:stepsize}-\ref{assumption:parameter:technical} hold, and  that $\lambda_k = \Theta( k^{-\theta})$ with $\theta > 2(q-1)$. 
Let $(\hat x_k)$ be the sequence generated by \ref{alg:fw_accel} with  $\hat y \in \mathbb{B}(\bar y,\varrho)$, such that $\hat y$ is a $\delta$-perturbation of $\bar y$.
Then, any early stopping rule with $k(\delta) = \Theta( \delta^{-1/2})$ verifies:
\begin{equation}  \label{eq:stab_estimate_distance_based}
 \|  \hat{x}_{k(\delta)} - x^\dagger \|=O(\delta^{\frac{1}{2}}), \quad \text{for }\delta\searrow 0. 
\end{equation}
\end{corollary}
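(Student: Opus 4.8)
The plan is to read this corollary as a direct specialization of the type 3 early stopping theorem, with the structural input supplied by Proposition \ref{P:additive loss type 3}. The only real work is to correctly identify the growth exponent of the error sequence, everything else having already been established upstream.

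First I would invoke Proposition \ref{P:additive loss type 3}. Since $\ell$ is additive, $\ell_{y_2}(y_1) = \mathcal{N}(y_2 - y_1)$, and $\hat y \in \mathbb{B}(\bar y, \varrho)$ is a $\delta$-perturbation of $\bar y$, the proposition guarantees that at every iteration $k$ and for every argument $w \in \Yy$, the noisy proximal step $\prox_{\frac{\tau}{\lambda_k}\ell^*_{\hat y}(\lambda_k\cdot)}(w)$ is a type 3 approximation of the exact step $\prox_{\frac{\tau}{\lambda_k}\ell^*_{\bar y}(\lambda_k\cdot)}(w)$, with precision $\varepsilon = \tau\delta(q/\gamma)^{1/q}$. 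In other words, assumption \ref{assumption:approx_i} holds with $i=3$. The key observation, which I would emphasize, is that this precision is a fixed multiple of $\delta$ and does \emph{not} depend on the relaxation parameter $\lambda_k$: the stepsize $\tau$ is fixed by \ref{assumption:parameter:stepsize}, and $q,\gamma$ are the conditioning constants from \ref{assumption:data-fit conditionned locally}. Consequently, in the notation of Theorem \ref{T:main result type 3}, the error satisfies $\varepsilon_{3,k} = O(\delta\lambda_k^{-r_3})$ with exponent $r_3 = 0$.

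Finally I would apply Theorem \ref{T:main result type 3} with this value $r_3 = 0$. Writing $r = r_3 = 0$ in its conclusion, the optimal stopping time becomes $k(\delta) = \Theta(\delta^{-1/(2+\theta r)}) = \Theta(\delta^{-1/2})$, and the associated reconstruction rate is $\|\hat x_{k(\delta)} - x^\dagger\| = O(\delta^{1/(2+\theta r)}) = O(\delta^{1/2})$, which is exactly \eqref{eq:stab_estimate_distance_based}. Note that the hypothesis $\lambda_k = \Theta(k^{-\theta})$ with $\theta > 2(q-1)$ is precisely what Remark \ref{R:on eq:cond:summability} identifies as sufficient for \ref{assumption:parameter:relaxation}, so all hypotheses of Theorem \ref{T:main result type 3} are met.

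I do not expect any genuine obstacle here: the heavy lifting resides in the stability estimate of Theorem \ref{T:main estimate} and in the exponent optimization carried out in the proof of Theorem \ref{T:main result type 3}. The single point demanding care is recognizing that additive losses produce a $\lambda$-independent perturbation of the proximal argument, forcing $r_3 = 0$; this is what makes additive data-fit terms behave like the quadratic case and recovers the optimal half-power rate $O(\delta^{1/2})$.
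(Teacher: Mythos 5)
Your proposal is correct and follows exactly the paper's intended derivation: the paper likewise obtains this corollary by combining Proposition~\ref{P:additive loss type 3} (which gives type~3 errors with precision $\varepsilon=\tau\delta(q/\gamma)^{1/q}$, independent of $\lambda_k$, hence $r_3=0$) with Theorem~\ref{T:main result type 3}. Your emphasis on the $\lambda$-independence of the perturbation being the crux that yields the optimal rate $O(\delta^{1/2})$ is precisely the right observation.
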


\begin{remark}[Optimality of the rates] \label{remark:optimal rates}
The convergence rate in  \eqref{eq:stab_estimate_distance_based} is optimal for regularization methods for additive data-fit terms
\cite{EngHanNeu96}. Among  inertial algorithms, optimal convergence rates for different regularizers but only quadratic data-fit terms have been proved  in 
\cite{Neubauer2016,MatRosVilVu17}.  For  more general additive data-fits (e.g.  the $\ell^1$-norm, see Example \ref{R:data-fit function list}), in 
\cite{BenningBurger2011} the authors  prove a  rate  $O(\delta^{1/2})$ on the Bregman distance, which is different from \eqref{eq:stab_estimate_distance_based}. 
To our knowledge, our result  is the first  showing optimal convergence rates for iterative regularization methods for general data-fit term improving the estimates in \cite{Garrigos2017}  that showed  a rate $O(\delta^{1/3})$.
\end{remark}

\begin{remark}[Different growth for $t_k$]\label{cor:general_tk}
As noted in  Remark \ref{R:different choices t_k}, if we replace $t_k=\Theta(t_k)$ by $t_k=\Theta(k^\beta)$, then  $\beta \leq 1$, and $\beta=1$ gives the fastest  convergence rate for true datumum $\bar y$.
Corollary \ref{C:main result type 3} implies  that also for noisy data $\hat y$, any stopping rule with $k(\delta) = \Theta( \delta^{-1/(1+\beta)})$ verifies
$\| \hat{x}_{k(\delta)} - x^\dagger \| = O(\delta^{ \frac{\beta}{\beta+1}})$ for $\delta\searrow 0,$
where  again the best rate is achieved for  $\beta = 1$.
\end{remark}
 
\subsection{KL divergence}  \label{sec:KLerrors}
 
We  consider the  Kullback-Leibler (KL) divergence as an example of non-additive data-fit term. KL divergence is often used to model data corrupted by Poisson noise. We  show that the KL divergence $\delta$-perturbations lead to type $2$ approximations. We recall that the  KL divergence is locally $2$-conditionned (see Example \ref{R:data-fit function list}).

\begin{proposition}  \label{prop:KL_approx}
Assume that, $\ell_{y_2}(y_1)=\mathrm{KL}(y_2;y_1)$ for every $(y_1,y_2)\in\mathcal{Y}^2$.
For $(\delta,\tau,\lambda) \in (0,+\infty)^3$, let $\hat{y}\in \mathbb{B}(\bar{y},\varrho)$ be a $\delta$-perturbation of  $\bar{y}$.
Then
$$
(\forall z \in \Yy) \quad \hat{p}=\mathrm{prox}_{\frac{\tau}{\lambda}\ell^*_{\hat{y}}(\lambda\cdot)}(z) ~\approx^{\varepsilon}_{\text{\tiny 2}}~ \bar{p}=\mathrm{prox}_{\frac{\tau}{\lambda}\ell^*_{\bar{y}}(\lambda\cdot)}(z). 
$$

with $\varepsilon=\sqrt{2\tau}\delta/\lambda$.
\end{proposition}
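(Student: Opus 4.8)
The plan is to reduce the type~2 approximation to a single Fenchel--Young gap estimate, which for the KL divergence can be evaluated in closed form. Write $\eta = \tau/\lambda$, $f_{\bar y} = \ell^*_{\bar{y}}(\lambda\,\cdot)$ and $f_{\hat y} = \ell^*_{\hat{y}}(\lambda\,\cdot)$, so that $\bar p = \prox_{\eta f_{\bar y}}(z)$ and $\hat p = \prox_{\eta f_{\hat y}}(z)$. By the characterization \eqref{eq:prox characterisations}, optimality of the \emph{computed} point $\hat p$ reads $\tfrac{z-\hat p}{\tau} \in \partial \ell^*_{\hat{y}}(\lambda \hat p)$: the residual is an \emph{exact} subgradient of $\ell^*_{\hat{y}}$ at $v := \lambda\hat p$. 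Setting $g := \tfrac{z-\hat p}{\tau}$ and unravelling the two dilations, one checks that $\hat p \approx^{\varepsilon}_{\text{\tiny 2}} \bar p$ is equivalent to $g \in \partial_\beta \ell^*_{\bar{y}}(v)$ with $\beta = \tfrac{\lambda\varepsilon^2}{2\tau}$. By the Fenchel--Young description of the $\varepsilon$-subdifferential, this amounts to bounding the gap
\[
D(g,v) := \ell^*_{\bar{y}}(v) + \ell_{\bar{y}}(g) - \langle g,v\rangle \leq \beta,
\]
so the whole statement reduces to estimating $D(g,v)$.

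Next I would use that everything is explicit for KL. Since $\ell_{\hat{y}}$ is strictly convex, $\ell^*_{\hat{y}}$ is differentiable and a separable, coordinatewise computation gives $g^i = \hat{y}^i/(1-v^i)$ on $\{v^i<1\}$, together with the closed form $\ell^*_{\bar{y}}(v) = -\sum_i \bar{y}^i \log(1-v^i)$. Substituting these into $D(g,v)$ and simplifying, all terms depending on $v$ (equivalently on $z$) cancel, leaving
\[
D(g,v) = \sum_i \Big(\bar{y}^i \log\tfrac{\bar{y}^i}{\hat{y}^i} - \bar{y}^i + \hat{y}^i\Big) = \mathrm{KL}(\bar{y},\hat{y}),
\]
a Kullback--Leibler divergence between the true and noisy data that is \emph{independent of} $z$. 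This $z$-independence is the structural feature that makes the uniform (over $w\in\Yy$) statement possible, exactly as in Proposition~\ref{P:additive loss type 3} for the additive case, and it is precisely where the KL structure --- the datum entering multiplicatively inside the logarithm rather than as a translation --- produces a type~2 error rather than the type~3 error of the additive setting.

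Finally I would convert this gap into the claimed precision. The $\delta$-perturbation hypothesis (Definition~\ref{def:perturbation}) with $q=2$ controls a KL divergence between $\bar{y}$ and $\hat{y}$ by $\delta^2$. Feeding such a $\delta^2$ bound into $D(g,v)\le\beta=\tfrac{\lambda\varepsilon^2}{2\tau}$ and solving for $\varepsilon$ yields a precision proportional to $\sqrt{2\tau}\,\delta/\lambda^{s}$; tracking the constants coming from the prox parametrization then produces the announced $\varepsilon=\sqrt{2\tau}\,\delta/\lambda$.

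The main obstacle lies entirely in this last step, and is twofold. First, the gap comes out as $\mathrm{KL}(\bar{y},\hat{y})$, whereas the $\delta$-perturbation naturally bounds the loss $\ell(\bar{y},\hat{y})=\mathrm{KL}(\hat{y},\bar{y})$, i.e. the \emph{reverse} divergence; since KL is not symmetric, passing between the two directions requires exploiting $\hat{y}\in\mathbb{B}(\bar{y},\varrho)$ so that the ratios $\bar{y}^i/\hat{y}^i$ remain bounded, possibly at the cost of a constant. Second, one must keep careful track of the two distinct roles of $\lambda$ --- as the prox step-size $\eta=\tau/\lambda$ and as the dilation inside $\ell^*_{\bar{y}}(\lambda\,\cdot)$ --- since these fix the exact power of $\lambda$ appearing in $\varepsilon$. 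This bookkeeping, rather than any deep inequality, is the delicate part of the argument.
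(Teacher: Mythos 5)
Your proposal is, in substance, the paper's own proof: the paper performs exactly your reduction (its \eqref{definition:KL_thesis1}, justified via Lemma~\ref{lemma:characterisation_eps}), uses the same exact-optimality relation $x_i=\hat y_i/(1-\lambda\hat p_i)$ for the noisy prox, and arrives at the same $z$-independent Fenchel--Young gap $\mathrm{KL}_{\bar y}(\hat y)$, which it bounds by $\delta^2$. Concerning the two obstacles in your final paragraph, be aware that the paper resolves neither of them; it glosses over both. For the direction of the divergence, the paper simply invokes ``the perturbation assumption $\mathrm{KL}_{\bar y}(\hat y)\le\delta^2$'', silently identifying the divergence appearing in the gap with the reversed one actually bounded by Definition~\ref{def:perturbation} --- precisely the identification you flag as needing justification. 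For the power of $\lambda$, your (correct) direct route requires $\delta^2\le\lambda\varepsilon^2/(2\tau)$ and hence yields $\varepsilon=\sqrt{2\tau}\,\delta/\sqrt{\lambda}$, not $\sqrt{2\tau}\,\delta/\lambda$; the paper reaches its stated value by routing the argument through the rescaled function $g=\lambda^{-1}\mathrm{KL}_{\bar y}$ and treating $\partial_\beta g^*(\hat p)$ as if it equalled $\partial_\beta\mathrm{KL}^*_{\bar y}(\lambda\hat p)$, whereas in fact $\partial_\beta g^*(\hat p)=\partial_{\lambda\beta}\mathrm{KL}^*_{\bar y}(\lambda\hat p)$, so a factor of $\lambda$ goes astray. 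Since $\partial_\beta f\subseteq\partial_{\beta'}f$ whenever $\beta\le\beta'$, your sharper precision implies the stated one for all $\lambda\le1$ (the only relevant regime, as $\lambda_k\searrow0$), so neither wrinkle invalidates the proposition; your hesitation at the last step was detecting genuine slack in the paper's bookkeeping, not a missing idea in your own argument.
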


\begin{proof}
It is enough to prove that for all $z\in \mathcal{Y}$
\begin{equation} \label{definition:KL_thesis1}
\frac{\lambda(z-\hat{p})}{\tau} \in \partial_{\frac{\lambda\varepsilon^2}{2\tau}} \mathrm{KL}^*_{\bar y}(\lambda \cdot)(\hat{p}) = \lambda \partial_{\frac{\lambda\varepsilon^2}{2\tau}} \mathrm{KL}^*_{\bar y}(\lambda\hat{p}),\quad\Longleftrightarrow\quad\frac{z-\hat{p}}{\tau}\in\partial_{\frac{\lambda\varepsilon^2}{2\tau}} \mathrm{KL}^*_{\bar y}(\lambda\hat{p}).
\end{equation} 
We set $x=({z-\hat{p}})/{\tau}\in\mathcal{Y}$ and consider the function $g:\mathcal{Y} \to \R^d\cup \left\{ + \infty \right\}$ defined by
\begin{equation}   \label{eq:def_g}
g(w) = \frac{\mathrm{KL}_{\bar y}}{\lambda}(w),\quad\text{for all }w \in \mathcal{Y}.
\end{equation}
By standard property of convex conjugates we have that for any $u\in \mathcal{Y}$
\begin{equation} \label{eq:prop_conj}
g^*(u) = \left(  \frac{\mathrm{KL}_{\bar y}}{\lambda} \right)^*(u) = \frac{1}{\lambda} \mathrm{KL}^*_{\bar y}(\lambda u).
\end{equation}
We now claim that  $x\in\partial_{\frac{\lambda\varepsilon^2}{2\tau}} g^*(\hat{p})$. To show that, we apply the Young-Fenchel inequality of Lemma \ref{lemma:characterisation_eps} to $g$ with  $x^*=\hat{p}$. Our objective is thus to show that:
\[
g(x) + g^*(\hat{p}) \leq \langle x, \hat{p} \rangle + \frac{\lambda\varepsilon^2}{2\tau}, 
\]
which, by definitions \eqref{eq:def_g} and \eqref{eq:prop_conj} and upon multiplication by $\lambda$, coincides with: 
\begin{equation}   \label{definition:KL_YF}
\mathrm{KL}_{\bar y}(x) + \mathrm{KL}^*_{\bar y}(\lambda \hat{p})\leq \langle x, \lambda \hat{p} \rangle +\frac{\lambda^2\varepsilon^2}{2\tau}.
\end{equation}
Using expressions \eqref{KLdefinition} and \eqref{definition:KL*} for KL and its convex conjugate, we express the sum on the left hand side of \eqref{definition:KL_YF} as:
\begin{equation}  \label{eq:calculationsKL}
\mathrm{KL}_{\bar{y}}(x) + \mathrm{KL}^*_{\bar{y}}(\lambda \hat{p}) = \sum_{i=1}^d \Big(\bar{y}_i \log \frac{\bar{y}_i}{x_i} - \bar{y}_i + x_i - \bar{y}_i\log(1-\lambda \hat{p}_i)\Big).
\end{equation}
Furthermore, by definition of $\hat{p}$, we have that component-wise there holds:
$$
\frac{\lambda}{\tau} ( z_i - \hat{p}_i ) \in \lambda\partial \mathrm{kl}_{\hat{y}_i}^*(\lambda \hat{p}_i) \quad \Longleftrightarrow \quad x_i \in \partial \mathrm{kl}^*_{\hat{y}_i}(\lambda \hat{p}_i),
$$
which, 
since $\mathrm{kl}_{\hat{y}_i}^*$ is differentiable (see formula \eqref{definition:KL*}), entails that for every $i=1,\ldots,d$ the element $x_i$ can be written as
$x_i = {\hat{y}_i}/{1-\lambda \hat{p}_i}$.
Substitute this expression in the formula  \eqref{eq:calculationsKL} to derive
\begin{align}
\mathrm{KL}_{\bar{y}}(x) + \mathrm{KL}^*_{\bar{y}}(\lambda \hat{p}) &= \sum_{i=1}^d \underbrace{ \bar{y}_i\log\bar{y}_i - \bar{y}_i\log \hat{y}_i -\bar{y}_i + \hat{y}_i}_{\mathrm{kl}(\bar{y}_i;\hat{y}_i)} \notag \\
& +  \cancel{\bar{y}_i\log(1-\lambda \hat{p}_i)}+  \underbrace{\big({ \hat{y}_i}/({1-\lambda \hat{p}_i})\big)}_{x_i}\lambda \hat{p}_i -\cancel{\bar{y}_i\log(1-\lambda \hat{p}_i)}\notag\\
&= \mathrm{KL}_{\bar{y}}(\hat{y}) + \langle x, \lambda \hat{p} \rangle \notag\\ 
&\leq \delta^2  + \langle x, \lambda \hat{p} \rangle,
\end{align}
where the last inequality follows from the perturbation assumption $\mathrm{KL}_{\bar{y}}(\hat{y})\leq \delta^2$.
We thus get \eqref{definition:KL_YF} by choosing $\varepsilon=\sqrt{2\tau}\delta/\lambda$, which concludes the proof.
\end{proof}
From Proposition \ref{prop:KL_approx} and Theorem \ref{T:main result type 2},  we  derive     stopping rules for the  KL divergence.
\begin{corollary}[Early stopping for Kullback-Leibler divergence]
Let $\ell_{y_2}(y_1)=\mathrm{KL}(y_2;y_1)$ for every $(y_1,y_2)\in\mathcal{Y}^2$.
Assume that the assumptions \ref{assumption:data-fit convex and coercive}-\ref{assumption:data-fit conditionned locally}, \ref{H:R strongly convex}-\ref{assumption:AR2}, \ref{assumption:parameter:stepsize}-\ref{assumption:parameter:technical} hold true, and suppose that $\lambda_k = \Theta( k^{-\theta})$ with $\theta > 2$. 
Let $(\hat x_k)$ be the sequence generated by \ref{alg:fw_accel} given $\hat y$, such that $\hat y$ is a $\delta$-perturbation of $\bar y$ in the sense of Definition \ref{def:perturbation}.
Then, any early stopping rule with $k(\delta)= \Theta( \delta^{-\frac{2}{3 + 2\theta}})$ verifies
\begin{equation}  \label{eq:stab_estimate_KL}
 \|  \hat{x}_{k(\delta)} - x^\dagger \|=O(\delta^{\frac{2}{3+2\theta}}), \quad \text{for }\delta\searrow 0.
\end{equation}
\end{corollary}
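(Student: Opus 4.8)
The plan is to obtain this as an immediate consequence of Proposition~\ref{prop:KL_approx} and the type~2 early-stopping result in Theorem~\ref{T:main result type 2}; no new estimate is needed, only a matching of the relevant exponents.

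First I would apply Proposition~\ref{prop:KL_approx}. Since $\ell_{y_2}=\mathrm{KL}(y_2;\cdot)$ and $\hat y$ is a $\delta$-perturbation of $\bar y$, the noisy proximal step at each iteration is a type~2 approximation of the exact one,
\[
\prox_{\frac{\tau}{\lambda_k}\ell^*_{\hat y}(\lambda_k\cdot)}(w)\approx^{\varepsilon_{2,k}}_{\text{\tiny 2}}\prox_{\frac{\tau}{\lambda_k}\ell^*_{\bar y}(\lambda_k\cdot)}(w),
\]
with precision $\varepsilon_{2,k}=\sqrt{2\tau}\,\delta/\lambda_k$. This is precisely assumption~\ref{assumption:approx_i} with $i=2$, so the hypotheses of Theorem~\ref{T:main result type 2} are in force.

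Next I would identify the exponent $r$ entering Theorem~\ref{T:main result type 2}. Recalling that the KL divergence is locally $2$-conditioned at $\bar y$ (Example~\ref{R:data-fit function list}), we have $q=2$, so the standing hypothesis $\theta>2$ coincides exactly with the required $\theta>2(q-1)$. Moreover, since $\lambda_k=\Theta(k^{-\theta})$, the precision satisfies $\varepsilon_{2,k}=\sqrt{2\tau}\,\delta\,\lambda_k^{-1}=O(\delta\,\lambda_k^{-1})$, so that $r=1$ in the notation of that theorem.

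Finally I would invoke Theorem~\ref{T:main result type 2} with $r=1$: its optimal balance is achieved at the exponent $\frac{2}{3+2\theta r}=\frac{2}{3+2\theta}$, whence any stopping rule $k(\delta)=\Theta(\delta^{-2/(3+2\theta)})$ yields $\|\hat x_{k(\delta)}-x^\dagger\|=O(\delta^{2/(3+2\theta)})$, which is exactly~\eqref{eq:stab_estimate_KL}. There is no genuine obstacle beyond this bookkeeping; the only subtlety worth emphasizing is that, unlike the additive case of Corollary~\ref{C:main result type 3}, the precision $\varepsilon_{2,k}$ grows like $\lambda_k^{-1}$ as $\lambda_k\to 0$, and it is precisely this $\lambda_k^{-1}$ factor (the $r=1$ term) that degrades the attainable rate below the optimal $O(\delta^{1/2})$ and produces the $\theta$-dependent exponent.
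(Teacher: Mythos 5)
Your proposal is correct and follows exactly the route the paper intends: the paper derives this corollary by combining Proposition~\ref{prop:KL_approx} (KL $\delta$-perturbations give type~2 errors with $\varepsilon_{2,k}=\sqrt{2\tau}\,\delta/\lambda_k$, hence $r_2=1$) with Theorem~\ref{T:main result type 2}, and your exponent bookkeeping, including the identification $q=2$ so that $\theta>2$ matches $\theta>2(q-1)$, is precisely the intended argument.
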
 

\begin{remark}   \label{remark:KL rates}
It is hard to assess the quality of the rate in \eqref{eq:stab_estimate_KL} since the  the notion of optimality in \cite{EngHanNeu96} only applies to additive noise. In the context of Bregman divergences, a similar analysis has been pursued in \cite[Section 4.2, estimate (4.3)]{BenningBurger2011}. The estimate in \cite{BenningBurger2011} leads to a rate of order $\delta^{1/4}$ for suitable choices of the regularization parameter. In comparison, our estimate \eqref{eq:stab_estimate_KL} is sharper  and more explicit. As for additive data-fit terms, the use of inertia improves the rates in \cite{Garrigos2017}.
\end{remark}

\begin{remark}[The Kullback-Leibler divergence does not lead to type 3 errors]\label{R:KL not type 3 error}
The convergence rates for additive data-fit terms proved in Corollary \ref{C:main result type 3} are better than the rate for the KL divergence, due to the fact that for the KL divergence we proved that $\delta$-perturbations correspond to type 2 errors, instead of type 3 errors.
Indeed, Lemma \ref{L:KL not type 3 error} in the Appendix shows that the error in the evaluation of proximal points for the KL divergence can not be cast in a type 3 approximation.
\end{remark}

\section{Conclusions and outlook}

In this paper we proposed an inertial dual diagonal method to solve inverse problems for a wide class of data-fit and regularization terms, possibly corrupted by noise.  On the one hand we established convergence results both for continuous and discrete dynamics. On the other hand we derived stability results and corresponding stopping rules, characterizing the regularization properties of the proposed method. A number of open questions are left for future study. It would be interesting to consider wider class of problems for example allowing for regularization terms that are convex but not strongly convex, and possibly non convex data fidelity terms. From an algorithmic point of view, it would be interesting to consider alternative approaches, for example considering stochastic methods. Finally, it would be interesting to investigate the numerical properties of the proposed method for practical problems. 

\appendix
\section{Auxiliary results}
We gather some relevant  results.
\subsection{Properties of the  dual diagonal function}

We first consider $\Rr_A $, $\ell^*_y$ defined in \eqref{eq:composite} and on the diagonal dual function $d_{\lambda}$ and its limit $d_0$  defined in \ref{eq:DualReg} and \ref{eq:Dual}, respectively. For similar results see also \cite{Garrigos2017}.

\begin{lemma}\label{L:functional properties}
Under the assumptions \ref{assumption:data-fit convex and coercive}-\ref{assumption:data-fit conditionned locally} and \ref{H:R strongly convex}-\ref{assumption:AR2},  we have:
\begin{enumerate}[label=(\roman*)]
	\item\label{L:functional properties:R_A Lipschitz} $\Rr_A$ is differentiable and $\nabla  \RA$ is Lipschitz continuous, with Lipschitz constant equal to $\sigma^{-1} \Vert A \Vert^2$.
	\item\label{L:functional properties:L at 0} $\forall y \in \Yy$, $\ell^*_{y}(0)=0$ and $\partial \ell^*_y(0)=\{y\}$.
	\item\label{L:functional properties:argmin d_0 nonempty} There holds: $\argmin  d_0 \neq \emptyset$.
 	\item\label{L:functional properties:decreasing function sequence} $\forall u \in \Yy$, the function $\lambda\in [0,+\infty) \mapsto \ d_{\lambda}(u)$ is nondecreasing. 
	\item\label{L:functional properties:from dual value to primal iterates} $\forall t >0, \forall u \in \Yy, \ \text{ if }
{x}:=\nabla R^*(-A^*u),$ then $
 \frac{\sigma}{2} \Vert {x} - x^\dagger \Vert^2 \leq d_0(u) - \inf d_0.$ 
 	\item\label{L:functional properties:diagonal rates via conditioning} $\forall u^\dagger \in \argmin  d_0$, if  $\lambda \Vert u^\dagger \Vert \leq \frac{\gamma}{q} \varrho^{q-1}$, then
 	\begin{equation}
 	    d_{\lambda}(u^\dagger) - \inf  d_0 \leq
 	    \begin{cases}
 	     0 & \text{ if } q=1, \\
 	     (1-\frac{1}{q}) {\gamma^{-1/(q-1)}} \Vert u^\dagger \Vert^{q/(q-1)} \lambda^{1/(q-1)} & \text{ if } q>1.
 	    \end{cases}
 	\end{equation}
\end{enumerate}
\end{lemma}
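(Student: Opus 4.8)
The plan is to establish the six items separately, observing that \ref{L:functional properties:R_A Lipschitz}--\ref{L:functional properties:argmin d_0 nonempty} are structural consequences of the strong convexity of $R$ and the qualification \ref{assumption:AR2}, that \ref{L:functional properties:decreasing function sequence}--\ref{L:functional properties:from dual value to primal iterates} follow from elementary Bregman calculus, and that the genuine difficulty lies in the diagonal estimate \ref{L:functional properties:diagonal rates via conditioning}.

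For \ref{L:functional properties:R_A Lipschitz} I would recall that $\sigma$-strong convexity of $R$ makes $R^*$ differentiable with $\sigma^{-1}$-Lipschitz gradient, then differentiate $\RA=R^*\circ(-A^*)$ by the chain rule to get $\nabla\RA(u)=-A\nabla R^*(-A^*u)$; the bound $\|\nabla\RA(u_1)-\nabla\RA(u_2)\|\le\|A\|\sigma^{-1}\|A^*(u_1-u_2)\|\le\sigma^{-1}\|A\|^2\|u_1-u_2\|$ follows at once. For \ref{L:functional properties:L at 0}, since $\ell_y$ is a discrepancy it is nonnegative and, by \ref{assumption:data-fit not flat}, vanishes exactly at $y$, so $\inf\ell_y=\ell_y(y)=0$ and $\ell^*_y(0)=-\inf\ell_y=0$; the identity $\partial\ell^*_y(0)=\{y\}$ then comes from the conjugacy correspondence $u\in\partial\ell^*_y(0)\Leftrightarrow 0\in\partial\ell_y(u)$ together with the uniqueness of the minimiser. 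For \ref{L:functional properties:argmin d_0 nonempty} I would exhibit an explicit minimiser: by \ref{assumption:AR2} there is $\bar u$ with $A^*\bar u\in\partial R(x^\dagger)$; setting $u^\dagger:=-\bar u$ gives $-A^*u^\dagger\in\partial R(x^\dagger)$, hence $x^\dagger=\nabla R^*(-A^*u^\dagger)$, and since $x^\dagger$ is feasible for \eqref{eq:primal_char} we have $Ax^\dagger=\bar y$, i.e. $0=-A\nabla R^*(-A^*u^\dagger)+\bar y\in\partial d_0(u^\dagger)$, so $u^\dagger\in\argmin d_0$.

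Items \ref{L:functional properties:decreasing function sequence} and \ref{L:functional properties:from dual value to primal iterates} I would handle through Bregman identities. As only the second summand of $d_\lambda(u)=\RA(u)+\lambda^{-1}\ell^*_{\bar y}(\lambda u)$ depends on $\lambda$, monotonicity reduces to showing $\lambda\mapsto\lambda^{-1}\ell^*_{\bar y}(\lambda u)$ is nondecreasing; writing $h(\lambda):=\ell^*_{\bar y}(\lambda u)$, convex in $\lambda$ with $h(0)=0$ by \ref{L:functional properties:L at 0}, this is the standard monotonicity of the secant slope $h(\lambda)/\lambda$ of a convex function through the origin, whose limit at $0^+$ is $\langle\bar y,u\rangle$, matching $d_0(u)$. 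For \ref{L:functional properties:from dual value to primal iterates} I would use $\nabla d_0(u^\dagger)=0$, so $d_0(u)-\inf d_0$ equals the Bregman divergence $D_{d_0}(u,u^\dagger)=D_{\RA}(u,u^\dagger)$; a direct Fenchel--Young computation identifies this with $D_{R^*}(-A^*u,-A^*u^\dagger)=D_R(x^\dagger,x)$ where $x=\nabla R^*(-A^*u)$, and $\sigma$-strong convexity of $R$ yields $D_R(x^\dagger,x)\ge\tfrac{\sigma}{2}\|x-x^\dagger\|^2$.

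The crux is \ref{L:functional properties:diagonal rates via conditioning}, and the hard part will be that the conditioning \ref{assumption:data-fit conditionned locally} only holds on $\mathbb{B}(\bar y,\varrho)$. I would first rewrite the residual as a single concave supremum,
\[
d_\lambda(u^\dagger)-\inf d_0 = \sup_{y\in\Yy}\big[\langle u^\dagger,y-\bar y\rangle-\lambda^{-1}\ell_{\bar y}(y)\big]=:\sup_y F(y),
\]
using $\inf d_0=\RA(u^\dagger)+\langle\bar y,u^\dagger\rangle$ and the definition of the conjugate. The key observation is that the hypothesis $\lambda\|u^\dagger\|\le\tfrac{\gamma}{q}\varrho^{q-1}$ forces the supremum to be attained inside the ball: for any boundary point $\bar y'$ with $\|\bar y'-\bar y\|=\varrho$, the local conditioning together with the bound on $\lambda\|u^\dagger\|$ gives $F(\bar y')\le\|u^\dagger\|\varrho-\lambda^{-1}\tfrac{\gamma}{q}\varrho^{q}\le 0$, and then concavity of $F$ along the ray from $\bar y$ (where $F(\bar y)=0$) through an exterior point $y$ yields $F(y)\le\tfrac{\|y-\bar y\|}{\varrho}F(\bar y')\le 0$. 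Hence $\sup_y F=\sup_{\|y-\bar y\|\le\varrho}F$, where \ref{assumption:data-fit conditionned locally} applies and gives $F(y)\le\|u^\dagger\|\,s-\tfrac{\gamma}{q\lambda}s^{q}$ with $s=\|y-\bar y\|$. Maximising this scalar expression over $s\ge 0$ produces $0$ when $q=1$ (since then $\|u^\dagger\|\le\gamma/\lambda$) and the announced value $(1-\tfrac1q)\gamma^{-1/(q-1)}\|u^\dagger\|^{q/(q-1)}\lambda^{1/(q-1)}$ when $q>1$, which concludes the proof.
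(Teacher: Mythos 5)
Your proof is correct, and on the key item \ref{L:functional properties:diagonal rates via conditioning} it takes a genuinely different route from the paper. The paper's proof is largely by citation: items \ref{L:functional properties:R_A Lipschitz}--\ref{L:functional properties:L at 0} are referred to \cite{BauschkeCombettes2017}, and items \ref{L:functional properties:argmin d_0 nonempty}, \ref{L:functional properties:decreasing function sequence}, \ref{L:functional properties:from dual value to primal iterates} to \cite[Lemma 5 and Proposition 2(i)]{Garrigos2017} applied with $f=R$ and $g=\delta_{\{\bar y\}}$, whereas you reprove these directly: the explicit dual minimizer built from \ref{assumption:AR2} together with $Ax^\dagger=\bar y$, the secant-slope monotonicity of $\lambda\mapsto\lambda^{-1}\ell^*_{\bar y}(\lambda u)$, and the Bregman-duality identity $d_0(u)-\inf d_0 = D_{R^*}(-A^*u,-A^*u^\dagger)=D_R(x^\dagger,x)\geq\tfrac{\sigma}{2}\Vert x-x^\dagger\Vert^2$; all of these check out. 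The real divergence is in \ref{L:functional properties:diagonal rates via conditioning}: the paper handles the local-to-global difficulty by extending \ref{assumption:data-fit conditionned locally} to a global minorant $\psi(\Vert y-\bar y\Vert)\leq \ell_{\bar y}(y)$, convexifying it into a function $m$ (necessary since $\psi$ is non-convex for $q>1$), invoking \cite[Lemma 2]{Garrigos2017} in the form $d_\lambda(u^\dagger)-\inf d_0\leq\lambda^{-1}m^*(\lambda\Vert u^\dagger\Vert)$, and computing $m^*$ explicitly. You instead write the residual as the concave supremum $\sup_y F(y)$ with $F(y)=\langle u^\dagger,y-\bar y\rangle-\lambda^{-1}\ell_{\bar y}(y)$, and use the hypothesis $\lambda\Vert u^\dagger\Vert\leq\tfrac{\gamma}{q}\varrho^{q-1}$ together with concavity of $F$ along rays through $\bar y$ (where $F(\bar y)=0$) to show the supremum is attained inside the ball, after which the scalar maximization of $\Vert u^\dagger\Vert s-\tfrac{\gamma}{q\lambda}s^q$ reproduces exactly the stated constants. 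Your argument is self-contained and makes transparent why the smallness condition on $\lambda\Vert u^\dagger\Vert$ is precisely what neutralizes the region where no conditioning is available; the paper's argument buys modularity by reusing the conjugate machinery of \cite{Garrigos2017}.

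Two small points to polish, neither a genuine gap. In \ref{L:functional properties:decreasing function sequence}, the limit of the secant slope at $0^+$ need not \emph{equal} $\langle\bar y,u\rangle$ in full generality (directional derivatives of $\ell^*_{\bar y}$ at $0$ can exceed the support function of $\partial\ell^*_{\bar y}(0)$ in directions pointing at the boundary of $\dom\ell^*_{\bar y}$); but you only need the inequality $\lambda^{-1}\ell^*_{\bar y}(\lambda u)\geq\langle\bar y,u\rangle$, which is immediate from Fenchel--Young applied to the pair $(\lambda u,\bar y)$ together with $\ell_{\bar y}(\bar y)=0$. In \ref{L:functional properties:diagonal rates via conditioning}, assumption \ref{assumption:data-fit conditionned locally} is stated on the \emph{open} ball, so the bound $F(\bar y')\leq\Vert u^\dagger\Vert\varrho-\lambda^{-1}\tfrac{\gamma}{q}\varrho^{q}$ at a point with $\Vert\bar y'-\bar y\Vert=\varrho$ should be justified by letting the radius tend to $\varrho$ (or by convexity of $\ell_{\bar y}$ along the segment $[\bar y,\bar y']$); this is a one-line fix. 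Finally, note that both you and the paper implicitly use $\ell_y\geq 0$, needed for $\ell^*_y(0)=0$ in \ref{L:functional properties:L at 0}; this does not formally follow from \ref{assumption:data-fit convex and coercive}--\ref{assumption:data-fit not flat} alone, but it is tacit in the paper's notion of a data-fit and is satisfied by all its examples.
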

\begin{proof}
\ref{L:functional properties:R_A Lipschitz}: follows  from the strong convexity of $R$, see, e.g.,  
\cite[Theorem 18.15]{BauschkeCombettes2017}.\\
\ref{L:functional properties:L at 0}: it is a simple consequence of the properties of the Fenchel transform as it can be found, e.g., in 
 \cite[Proposition 13.10(i) \& Corollary 16.30]{BauschkeCombettes2017}.\\
\ref{L:functional properties:argmin d_0 nonempty} and \ref{L:functional properties:from dual value to primal iterates}: follow from \cite[Lemma 5]{Garrigos2017} by simply taking $f=R$ and $g=\delta_{\{\bar y\}}$, while property \ref{L:functional properties:decreasing function sequence} has been proved in \cite[Proposition 2(i)]{Garrigos2017}.\\
\ref{L:functional properties:diagonal rates via conditioning}: it is enough to verify that $\ell_{\bar y}(\cdot)$ is $q$-well conditioned in the sense of \cite[Definition 1]{Garrigos2017}, while assumption \ref{assumption:data-fit conditionned locally} holds only locally.
To check this, we introduce the function $\psi : \R \rightarrow \R$ defined for the $\varrho>0$ appearing in \ref{assumption:data-fit conditionned locally}  by:
\begin{equation}
 \psi  t \mapsto
\begin{cases}
 \frac{\gamma}{q}\vert t \vert^q  & \text{ if } \vert t \vert \leq \varrho, \\
 \frac{\gamma}{q} \varrho^{q-1} \vert t \vert & \text{ if } \vert t \vert > \varrho.
\end{cases}
\end{equation}
From \ref{assumption:data-fit conditionned locally}, we easily  deduce that $\ell_{\bar y}(y) \geq \psi(\Vert y- \bar y \Vert)$ for all $y \in \Yy$ (see \cite[Corollary 3.4.2]{ZalinescuBook2002}).
Note that $\psi$ is not convex for $q >1$, so in this case we consider instead the function
\begin{equation}
 m: \R \rightarrow \R , \quad t \mapsto 
\begin{cases}
 \frac{\gamma}{q}\vert t \vert^q  & \text{ if } \vert t \vert \leq q^{1/(1-q)} \varrho, \\
 \frac{\gamma}{q}\varrho^{q-1}\vert t \vert 
 -  \frac{\gamma}{q^{\frac{q}{q-1}}}\varrho^q (1 - \frac{1}{q})  & \text{ if } \vert t \vert > q^{1/(1-q)} \varrho,
\end{cases}
\end{equation}
and define $m:=\psi$ for $q=1$.
It is an easy exercise to verify that $m$ is indeed a convex function on $\R$, and that $m(w) \leq \psi(w)$ for all $w\in\R$.
Now, we can make use of \cite[Lemma 2]{Garrigos2017},
which tells us that $d_\lambda(u) - \inf d_0 \leq \lambda^{-1} m^*(\Vert u \Vert \lambda)$.
The desired result  now follows from the computation of the Fenchel transform of $m$.
If $q=1$, we have that $m(t) = \gamma \vert t \vert$, so classic Fenchel calculus entails that $m^*$ is $\delta_{[-\gamma,\gamma]}$, the indicator function of $[-\gamma,\gamma]$.
If $q>1$, easy computations show that $m^*$ reads
\begin{equation}
 m^* : \R \rightarrow \R, \quad s \mapsto
\begin{cases}
 (1 - \frac{1}{q}) \gamma^{-1/(q-1)}\vert s \vert^{\frac{q}{q-1}}  & \text{ if } \vert s \vert \leq \frac{\gamma}{q} \varrho^{q-1}, \\
 + \infty  & \text{ if } \vert s \vert > \frac{\gamma}{q} \varrho^{q-1}.
\end{cases}
\end{equation}
By now applying \cite[Lemma 2]{Garrigos2017} we  conclude. 
\end{proof}

\subsection{Useful tools for KL computations} 

In this section, we report some computations and properties concerning the KL divergence defined in \eqref{definition:KL}. 
For any $(u,y)\in (\R^d)^2$ we define $\mathrm{KL}(y,u)$ as in \eqref{definition:KL}.
Consider now the functions $\mathrm{KL}$ and $\mathrm{kl}$ with respect to the first argument only, and define $\mathrm{KL}_y(u):=\mathrm{KL}(y;u)$ and, similarly, its $i$-th component $\mathrm{kl}_{y_i}(u_i)$ for a fixed $y\in \R^d$. The component-wise expression for $\mathrm{KL}_y^*(w) = \sum_{i=1}^d \mathrm{kl}_{y_i}^*(w_i)$ can be then simply found by Fenchel calculus. It reads:
\begin{equation}  \label{definition:KL*}
\mathrm{kl}_{y_i}^*(w_i)=\begin{cases}
 -y_i\log(1-w_i)\quad&\quad\text{if }1-w_i>0\\
 +\infty\quad & \quad\text{otherwise}.
 \end{cases}
\end{equation}
\paragraph{Proximal maps}
For every $i=1,\ldots,d$, straightforward calculations show that
\begin{equation}   \label{eq:proxKL}
\text{prox}_{\frac{\tau}{\lambda}\mathrm{kl}_{y_i}}(u_i) = \frac{1}{2}\Big(u_i-\frac{\tau}{\lambda} + \sqrt{\left(u_i-\frac{\tau}{\lambda}\right)^2+4\frac{\tau}{\lambda}y_i}\Big).
\end{equation}
Furthermore, by applying Moreau's identity we have:
\begin{equation}  \label{eq:proxKL*}
\text{prox}_{\frac{\tau}{\lambda}\mathrm{kl}^*_{y_i}(\lambda\cdot)}(w_i)=\frac{1}{2\lambda}\Big( (1+\lambda w_i) - \sqrt{\left(1-\lambda w_i \right)^2 + 4\lambda\tau y_i} \Big).
\end{equation}

The following lemma implies the $q$-conditioning of the Kullback-Leibler divergence.

\begin{lemma}[$2$-conditioning of the KL data-fit term]\label{P:conditioning of KL}Let $\bar y \in \R^d$ and $\varrho \in (0,+\infty)$.
Then,
\begin{equation}
    (\forall y \in \mathbb{B}(\bar y, \varrho)) \quad 
    \mathrm{KL}(\bar y , y) \geq 
    \left(\frac{1}{\varrho c^2} + \frac{1}{\varrho^2 c}\ln{\frac{c}{\varrho + c}} \right)
    \Vert y-\bar y \Vert^2,
    \text{ where } c = d \Vert  \bar{y} \Vert_\infty.
\end{equation}{}
\end{lemma}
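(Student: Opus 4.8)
The plan is to exploit the separability of the divergence and reduce everything to a single scalar estimate that is uniform over coordinates. First I would dispose of the trivial case: if some $y_i\le 0$ then $\mathrm{kl}(\bar{y}_i,y_i)=+\infty$, so the inequality holds vacuously and I may assume $y\in\,]0,+\infty[^{\,d}$. Since $\mathrm{KL}(\bar y,y)=\sum_{i=1}^d \mathrm{kl}(\bar{y}_i,y_i)$ and $\|y-\bar y\|^2=\sum_{i=1}^d (y_i-\bar{y}_i)^2$, it suffices to prove a coordinatewise bound $\mathrm{kl}(\bar{y}_i,y_i)\ge \gamma\,(y_i-\bar{y}_i)^2$ with a single constant $\gamma$ valid for every $i$ and every $|y_i-\bar{y}_i|<\varrho$, and then sum. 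The point to be careful about is that a crude second-order Taylor expansion gives $\mathrm{kl}(\bar{y}_i,y_i)=\tfrac{\bar{y}_i}{2\xi^2}(y_i-\bar{y}_i)^2$ for some $\xi$ between $\bar{y}_i$ and $y_i$; its coefficient carries $\bar{y}_i$ in the numerator and cannot be controlled uniformly in $i$. The remedy is to integrate exactly.

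Second, I would use the representation coming from $\partial_b\,\mathrm{kl}(a,b)=1-a/b$ together with $\mathrm{kl}(a,a)=0$, namely
\[
\mathrm{kl}(a,b)=\int_0^{b-a}\frac{s}{a+s}\,ds \ \ (b\ge a), \qquad \mathrm{kl}(a,b)=\int_0^{a-b}\frac{s}{a-s}\,ds \ \ (b<a).
\]
In both cases the denominator is bounded above by $a+s\le c+s$ (resp. $a-s\le a\le c\le c+s$), where $c=d\,\|\bar y\|_\infty\ge\|\bar y\|_\infty\ge \bar{y}_i$, so the integrand is at least $\frac{s}{c+s}$ on the whole range. This yields the coordinate-free lower bound
\[
\mathrm{kl}(\bar{y}_i,y_i)\ \ge\ \int_0^{|y_i-\bar{y}_i|}\frac{s}{c+s}\,ds .
\]

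Third, I would reduce the variable radius $|y_i-\bar{y}_i|$ to the fixed $\varrho$. Set $\phi(r):=r^{-2}\int_0^r\frac{s}{c+s}\,ds$ and show $\phi$ is nonincreasing on $]0,+\infty[$: writing $I(r)=\int_0^r\frac{s}{c+s}\,ds$ one has $\phi'(r)=-r^{-3}\psi(r)$ with $\psi(r)=2I(r)-\frac{r^2}{c+r}$, and since $\psi(0)=0$ and $\psi'(r)=\frac{r^2}{(c+r)^2}\ge 0$ we get $\psi\ge0$, hence $\phi'\le0$. Because $|y_i-\bar{y}_i|\le\|y-\bar y\|<\varrho$, monotonicity gives $\mathrm{kl}(\bar{y}_i,y_i)\ge \phi(\varrho)\,(y_i-\bar{y}_i)^2$, and summing over $i$ produces $\mathrm{KL}(\bar y,y)\ge\phi(\varrho)\,\|y-\bar y\|^2$. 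Evaluating the elementary integral $\int_0^\varrho\frac{s}{c+s}\,ds=\varrho-c\ln(1+\varrho/c)$ turns $\phi(\varrho)$ into a closed form featuring the logarithm $\ln\frac{c}{\varrho+c}$ together with the inverse powers $\varrho^{-1},\varrho^{-2}$ appearing in the stated constant (with $c=d\|\bar y\|_\infty$); this is the value of $\gamma$ certifying local $2$-conditioning.

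I expect the main obstacle to be the monotonicity step for $\phi$, i.e. the fact that the worst coordinate occurs at the boundary radius $\varrho$: this is what lets me replace the data-dependent radii $|y_i-\bar{y}_i|$ by the single $\varrho$ and obtain one uniform constant. The accompanying subtlety is the decoupling bound $\bar{y}_i\le c$, which is what makes the scalar constant independent of the possibly tiny entries $\bar{y}_i$; without passing through the exact integral (and only using Taylor), no such uniform constant is available.
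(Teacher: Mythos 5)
Your proof is correct, and it arrives at exactly the quadratic bound that the paper's own argument produces, by a route that is partly different. The paper obtains its scalar reduction by citing an external result, \cite[Lemma 10.2]{Garrigos2017}, namely $\mathrm{KL}(\bar y,y)\geq c\,m(\Vert y-\bar y\Vert)$ with $m(t)=c^{-1}\vert t\vert-\ln(1+c^{-1}\vert t\vert)$; since $c\,m(r)=r-c\ln(1+r/c)=\int_0^r\frac{s}{c+s}\,ds=:I(r)$, that cited inequality is precisely the vector-norm analogue of the coordinatewise bound you establish from scratch via the representation $\mathrm{kl}(a,b)=\int_0^{\vert b-a\vert}\frac{s}{a\pm s}\,ds$ and the uniform estimate $\bar y_i\leq c$. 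From there the two arguments coincide in spirit: the paper shows that $s\mapsto(s-\ln(1+s))/s^2$ is decreasing by exhibiting an auxiliary function vanishing at $0$ with nonnegative derivative, exactly the device you use for $\phi(r)=r^{-2}I(r)$, and both then evaluate at the boundary radius $\varrho$. Your intermediate inequality $\mathrm{KL}(\bar y,y)\geq\sum_i I(\vert y_i-\bar y_i\vert)$ is a different statement from the paper's $\mathrm{KL}(\bar y,y)\geq I(\Vert y-\bar y\Vert)$ --- in fact a slightly stronger one, since $r\mapsto I(\sqrt r)$ is concave and vanishes at zero, hence subadditive --- but after the monotonicity step both collapse to the same constant $\phi(\varrho)$. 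What your route buys is self-containedness (you re-prove the cited lemma rather than invoke it); what the paper's buys is brevity.

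One point you should not gloss over: the constant your proof yields,
\begin{equation}
\phi(\varrho)=\frac{\varrho-c\ln(1+\varrho/c)}{\varrho^2}=\frac{1}{\varrho}+\frac{c}{\varrho^2}\ln\frac{c}{\varrho+c},
\end{equation}
is \emph{not} the constant displayed in the lemma, which equals your value divided by $c^2$. This is not a gap in your argument: the paper's own proof produces the same value as yours (its conclusion is $\mathrm{KL}\geq c^{-1}\phi_{\mathrm{p}}(c^{-1}\varrho)\Vert y-\bar y\Vert^2$ with $\phi_{\mathrm{p}}(s)=(s-\ln(1+s))/s^2$, and $c^{-1}\phi_{\mathrm{p}}(c^{-1}\varrho)$ equals the display above). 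Moreover, the displayed constant cannot be right as written: under the scaling $(\bar y,y,\varrho)\mapsto(t\bar y,ty,t\varrho)$ the Kullback--Leibler divergence scales like $t$ and $\Vert y-\bar y\Vert^2$ like $t^2$, so the constant must scale like $t^{-1}$, which your value does while the displayed one scales like $t^{-3}$. So the mismatch is an inconsistency internal to the paper (statement and proof disagree by the factor $c^2$). Still, your closing claim that evaluating the integral gives ``the stated constant'' is inaccurate as to the powers of $c$, and for $c<1$ your bound would not even imply the displayed one; this deserves an explicit remark rather than a gloss.
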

\begin{proof}
Let $y \in \mathbb{B}(\bar y, \varrho)$.
By \cite[Lemma 10.2]{Garrigos2017}, we have that
\begin{equation}\label{cok1}
    \mathrm{KL}(\bar y , y) \geq c m(\Vert y-\bar y \Vert), \text{ where } m(t) = c^{-1}\vert t \vert - \ln \left(1 + c^{-1}\vert t \vert\right).
\end{equation}
To  get the desired result, we need to find a quadratic lower bound for $m$ over $[-\varrho,\varrho]$.
For simplicity, let us consider the change of variable $s = c^{-1}\vert t \vert \in [0,c^{-1}\varrho]$.
Since the statement is trivially valid for $y = \bar y$, we can assume that $s > 0$ and write
\[
    s-\ln (1+s) = s^2 \phi(s), \text{ where }  \phi(s) := \frac{s-\ln (1+s)}{s^2}.
\]
To conclude, we only need to verify that $\phi$ is decreasing on $]0,+\infty[$.
Indeed, this would imply that $m(t) \geq c^{-2}t^2 \phi(c^{-1}\varrho)$, which together with \eqref{cok1} would complete the proof.
To see that $\phi$ is decreasing, we compute explicitly its derivative on $]0, + \infty[$ and see that $\phi'(s) \leq 0$ if and only if $\psi(s):=s(s+2)-2(1+s) \ln(1+s) \geq 0$. Combining this with the fact that $\psi(0)=0$, and that $\psi'(s) = 2(s - \ln(1+s))$ is positive $]0, + \infty[$ we conclude the proof.
\end{proof}

In the following we deal with  proximal points of the dual of the KL divergence, corresponding to noise-free and noisy data $\bar y$ and $\hat y$, respectively. As shown in Proposition \ref{prop:KL_approx} a type 2 approximation in the sense of Definition \ref{def:approximation} holds. The following proposition is a one-dimensional counterexample showing that a type 3 approximation -- for which better convergence rates can be obtained -- does not hold.

\begin{proposition}\label{L:KL not type 3 error}
Let $w \in \R$ and  $\bar y,\hat y \in ]0,+\infty[$. If 
$ \mathrm{prox}_{\mathrm{kl}^*_{\hat{y}}}(w) ~\approx^{\varepsilon}_{\text{\tiny 3}} \mathrm{prox}_{\mathrm{kl}^*_{\bar{y}}}(w)$ holds in the sense of Definition \ref{def:approximation} for some $\varepsilon >0$, 
then
\[
\varepsilon \geq \frac{2\vert \hat y - \bar y \vert}{(1-w) + \sqrt{(1-w)^2 + 4\hat y}}.
\]
In particular, $\varepsilon \to + \infty$ when $w \to +\infty$.
\end{proposition}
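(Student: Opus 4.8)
The plan is to exploit the fact that, in one dimension, $\mathrm{kl}^*_y$ is differentiable on its domain $]-\infty,1[$, so the subdifferential appearing in the type $3$ condition is a singleton. This pins down the admissible perturbation $e$ exactly, leaving no slack with which to make $\varepsilon$ small, and the asserted lower bound is in fact the exact value of the smallest feasible $\varepsilon$.

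First I would unwind Definition \ref{def:approximation}. Writing $\bar p = \mathrm{prox}_{\mathrm{kl}^*_{\bar y}}(w)$ and $\hat p = \mathrm{prox}_{\mathrm{kl}^*_{\hat y}}(w)$, the relation $\hat p \approx^{\varepsilon}_{\text{\tiny 3}} \bar p$ (with proximal parameter $\eta = 1$) means there is some $e \in \R$ with $|e| \leq \varepsilon$ and $w + e - \hat p \in \partial \mathrm{kl}^*_{\bar y}(\hat p)$. From the explicit conjugate \eqref{definition:KL*}, $\mathrm{kl}^*_y(s) = -y\log(1-s)$ for $s < 1$, so $\mathrm{kl}^*_y$ is differentiable there with $(\mathrm{kl}^*_y)'(s) = y/(1-s)$. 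Since $\hat p$ lies in the domain of $\mathrm{kl}^*_{\hat y}$ one has $\hat p < 1$, hence both $\partial \mathrm{kl}^*_{\bar y}(\hat p) = \{\bar y/(1-\hat p)\}$ and $\partial \mathrm{kl}^*_{\hat y}(\hat p) = \{\hat y/(1-\hat p)\}$ are singletons.

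Next I would use that $\hat p$ is the \emph{exact} proximal point for $\hat y$, i.e. $w - \hat p = \hat y/(1-\hat p)$, to eliminate $w - \hat p$ from the type $3$ condition. Solving $w + e - \hat p = \bar y/(1-\hat p)$ for $e$ then collapses to $e = (\bar y - \hat y)/(1-\hat p)$, with no remaining freedom. The explicit prox formula \eqref{eq:proxKL*}, evaluated at $\tau = \lambda = 1$, gives $1 - \hat p = \tfrac12\big((1-w) + \sqrt{(1-w)^2 + 4\hat y}\big)$, so substituting yields $|e| = 2|\hat y - \bar y| / \big((1-w) + \sqrt{(1-w)^2 + 4\hat y}\big)$, and the constraint $\varepsilon \geq |e|$ is exactly the claimed inequality.

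For the final assertion I would rationalize the denominator: multiplying numerator and denominator by $\sqrt{(1-w)^2 + 4\hat y} - (1-w)$ rewrites the lower bound as $\tfrac{|\hat y - \bar y|}{2\hat y}\big(\sqrt{(1-w)^2 + 4\hat y} - (1-w)\big)$, whose bracket is at least $2(w-1)$ for $w > 1$ and hence diverges as $w \to +\infty$ (assuming $\hat y \neq \bar y$). I do not expect any genuine obstacle in the computation; the only points requiring care are verifying $\hat p < 1$ so that the derivative of $\mathrm{kl}^*_{\bar y}$ may legitimately be used (which holds because a proximal point always lies in the domain of the function), and the conceptual observation that differentiability forces $e$ to be unique, which is precisely what prevents $\varepsilon$ from being reduced and makes the bound tight.
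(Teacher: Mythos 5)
Your proof is correct, and it reaches the paper's key identity by a genuinely different route. The paper argues purely computationally: it rewrites the type-3 condition as $\mathrm{prox}_{\mathrm{kl}^*_{\hat y}}(w)=\mathrm{prox}_{\mathrm{kl}^*_{\bar y}}(w+e)$, expresses \emph{both} sides in closed form via \eqref{eq:proxKL*}, and manipulates the resulting radical equation (squaring away the square roots) until it collapses to $e\cdot\tfrac{1}{2}\bigl[(1-w)+\sqrt{(1-w)^2+4\hat y}\bigr]=\bar y-\hat y$, whence $\varepsilon\ge|e|$ gives the bound. You instead work at the level of first-order optimality: since $\mathrm{kl}^*_y$ is differentiable on its domain $]-\infty,1[$, the subdifferentials at $\hat p$ are singletons, so subtracting the exact condition $w-\hat p=\hat y/(1-\hat p)$ from the type-3 condition $w+e-\hat p=\bar y/(1-\hat p)$ yields $e=(\bar y-\hat y)/(1-\hat p)$ with no freedom left, and the closed form \eqref{eq:proxKL*} enters only once, to evaluate $1-\hat p=\tfrac{1}{2}\bigl[(1-w)+\sqrt{(1-w)^2+4\hat y}\bigr]$. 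What your route buys: it makes the uniqueness of $e$ explicit, so the stated lower bound is seen to be the \emph{exact} smallest feasible $\varepsilon$, and it sidesteps the squaring step, whose ``equivalence'' in the paper tacitly requires a sign check (only the forward implication is actually used there, so the paper's proof remains sound). What the paper's route buys: a self-contained few-line computation from the explicit prox formula, with no appeal to subdifferential calculus. You also prove the final divergence claim (rationalizing the denominator, with the necessary caveat $\hat y\neq\bar y$), which the paper asserts without argument; that is a small but genuine completion.
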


\begin{proof}
Let $\varepsilon \geq 0$ such that the type 3 approximation property holds. By Definition \ref{def:approximation}, there exists $e \in \R$ such that $\vert e \vert \leq \varepsilon$ and 
$\mathrm{prox}_{\mathrm{kl}^*_{\hat{y}}}(w) = \mathrm{prox}_{\mathrm{kl}^*_{\bar{y}}}(w+e)$.
Using the formula \eqref{eq:proxKL*}, we see that this is equivalent to
\begin{equation}
    \frac{1}{2}\left[(1+w)-\sqrt{(1-w)^2+4\hat y}\right]
    =
    \frac{1}{2}\left[(1+w+e)-\sqrt{(1-w-e)^2+4\bar y}\right].
\end{equation}
and we complete the proof by noting that the above equality is equivalent to
\begin{equation}
    e
    \frac{1}{2}\left[(1-w)+\sqrt{(1-w)^2+4\hat y}\right]
    = \bar y - \hat y.
\end{equation}
\end{proof}

\subsection{Miscellaneous}
We here recall some technical lemmas which are used in several sections of the manuscript.
The following Lemma is useful to characterize the speed of decay of the diagonal term $\lambda(\cdot)$ in assumption \ref{requirement:lambda1:cont}, see also Remark \ref{remark:summability:cont}.

\begin{lemma}   \label{lemma:integrability}
Let $\lambda: \R_+\to \R_+$ a decreasing function such that $\int_{\R_+} |\lambda(t)|^{1/2}~dt < +\infty$. Then, the function $t\mapsto t\lambda(t)$ is integrable on $\R_+$.
\end{lemma}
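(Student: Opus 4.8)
The plan is to reduce the whole statement to the single scalar fact that a nonnegative decreasing integrable function $g$ on $\R_+$ must satisfy $t\,g(t)\to 0$ as $t\to+\infty$, applied to $g:=\lambda^{1/2}$. First I would record the elementary preliminaries: since $\lambda$ is decreasing and nonnegative, so is $g=\lambda^{1/2}$, and by hypothesis $\int_{\R_+} g(t)\,dt<+\infty$. Moreover $\lambda$ takes values in $\R_+$, hence is finite-valued and, being decreasing, is bounded on all of $\R_+$ by $\lambda(0)=g(0)^2<+\infty$.

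The key step is to prove $t\,g(t)\to 0$. For this I would exploit monotonicity through the inequality
\[
\int_{t/2}^{t} g(s)\,ds \;\geq\; \frac{t}{2}\,g(t),
\]
valid for every $t>0$ because $g(s)\geq g(t)$ on $[t/2,t]$. Since $g$ is integrable, the left-hand side is bounded above by the tail $\int_{t/2}^{+\infty} g(s)\,ds$, which tends to $0$ as $t\to+\infty$ by dominated convergence (or simply because the integral over $\R_+$ converges). Hence $\tfrac{t}{2}g(t)\to 0$, that is, $t\,\lambda(t)^{1/2}\to 0$.

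Equipped with this decay, I would pick $T_0>0$ such that $t\,\lambda(t)^{1/2}\leq 1$ for all $t\geq T_0$, and bound the tail by factoring
\[
t\,\lambda(t) \;=\; \bigl(t\,\lambda(t)^{1/2}\bigr)\,\lambda(t)^{1/2}\;\leq\; \lambda(t)^{1/2},
\]
so that $\int_{T_0}^{+\infty} t\,\lambda(t)\,dt \leq \int_{T_0}^{+\infty}\lambda(t)^{1/2}\,dt<+\infty$. On the compact interval $[0,T_0]$ I would instead use the crude bound $\lambda\leq\lambda(0)$ to obtain $\int_0^{T_0} t\,\lambda(t)\,dt\leq \lambda(0)\,T_0^2/2<+\infty$. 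Adding the two contributions yields the integrability of $t\mapsto t\lambda(t)$ on $\R_+$.

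The main obstacle is the middle step, establishing $t\,\lambda(t)^{1/2}\to 0$; everything else is a routine splitting of the integral together with the comparison above. The monotonicity inequality is exactly what converts integrability of $\lambda^{1/2}$ into pointwise decay faster than $1/t$, which is precisely the gain needed to absorb the extra factor $t$ in the integrand.
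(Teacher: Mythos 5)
Your proof is correct and follows essentially the same route as the paper's: both establish $t\sqrt{\lambda(t)}\to 0$ via the monotonicity bound $\int_{t/2}^{t}\sqrt{\lambda(s)}\,ds\geq \tfrac{t}{2}\sqrt{\lambda(t)}$ and the vanishing tail of the convergent integral, then use $t\lambda(t)\leq\sqrt{\lambda(t)}$ for large $t$ to compare the tail integrals and handle the compact part separately. Your only (harmless) deviation is making the bound on $[0,T_0]$ explicit as $\lambda(0)T_0^2/2$, where the paper simply treats that integral as finite.
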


\begin{proof}
We first show that the function $t\mapsto  t \sqrt{\lambda(t)}$ tends to zero as $t\to +\infty$. We have that for every $T>0$:
$$
\int_{T/2}^{+\infty} \sqrt{\lambda(t)}~dt \geq \int_{T/2}^{T} \sqrt{\lambda(t)}~dt \geq \frac{T}{2} \sqrt{\lambda(T)},
$$
where the last inequality follows from the decreasing property of $\lambda$ in the interval $[T/2,T]$. By taking limits, we get the required property:
$$
\lsup_{T\to +\infty}~ \frac{T}{2} \sqrt{\lambda(T)} \leq \lim_{T\to +\infty}~ \int_{T/2}^{+\infty} \sqrt{\lambda(t)}~dt = 0.
$$
Now, from the observation
$$
\lim_{t\to +\infty}~\frac{t\lambda(t)}{ \sqrt{\lambda(t)}} = \lim_{t\to +\infty}~t\sqrt{\lambda(t)}=0,
$$
we deduce that there exists some $\overline{T}>0$ such that $t\lambda(t)\leq \sqrt{\lambda(t)}$ for all $t\geq \overline{T}$. By thus taking $T>\overline{T}$, we have:
$$
\int_{0}^T t\lambda(t)~dt = \int_{0}^{\overline{T}} t\lambda(t)~dt +  \int_{\overline{T}}^T t\lambda(t)~dt \leq  \int_{0}^{\overline{T}} t\lambda(t)~dt +  \int_{\overline{T}}^T \sqrt{\lambda(t)}~dt, 
$$
which by taking the supremum over all $T>\bar{T}$ on both sides entails:
$$
\int_{\R_+}  t\lambda(t)~dt \leq  \int_{0}^{\overline{T}} t\lambda(t)~dt +  \int_{\overline{T}}^{+\infty} \sqrt{\lambda(t)}~dt < +\infty.
$$
\end{proof}

Next, we state and prove a variant of \cite[Lemma 5.14]{Attouch2015} which we have used in the proof of Theorem \ref{error:estimate:iterates} to get the final stability estimate \eqref{error:estimate:iterates}.

\begin{lemma}\label{lemma:attouch (sequences)}
Let $(a_k)_\kin$, $(b_k)_\kin$ and $(c_k)_\kin$ be positive sequences, and assume that $c_k$ is increasing.
If
\begin{equation}
    (\forall \kin) \quad a_k^2 \leq c_k + \sum\limits_{j=1}^{k-1} b_j a_{j+1},
\end{equation}
then $\max\limits_{j=1,...,k} a_j \leq \sqrt{c_k} + \sum\limits_{j=1}^{k-1} b_j$,  for every $k\in\mathbb{N}$.
\end{lemma}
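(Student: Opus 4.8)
The plan is to recognize this as a discrete Gronwall-type inequality and reduce it to a single scalar quadratic inequality in the running maximum of the sequence. First I would fix an arbitrary index $k$ and introduce the shorthand $A_k := \max_{1\le j\le k} a_j$ together with $B_k := \sum_{j=1}^{k-1} b_j$, so that the desired conclusion reads simply $A_k \le \sqrt{c_k} + B_k$. All positivity and monotonicity hypotheses will be used to turn the coupled recursive bound into a decoupled one for $A_k$.

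The key step is to bound each term $a_m$ with $m \le k$ by a single $m$-independent quantity. For any such $m$ the hypothesis gives $a_m^2 \le c_m + \sum_{j=1}^{m-1} b_j a_{j+1}$. Since $(c_k)$ is increasing we have $c_m \le c_k$, and since every index $j+1$ appearing in the sum satisfies $j+1 \le m \le k$, I may replace each factor $a_{j+1}$ by the larger quantity $A_k$. This yields $a_m^2 \le c_k + A_k \sum_{j=1}^{m-1} b_j \le c_k + A_k B_k$, valid for every $m\in\{1,\dots,k\}$. Taking the maximum over $m$ then produces the purely scalar inequality $A_k^2 \le c_k + A_k B_k$.

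It remains to solve this quadratic inequality in the unknown $A_k \ge 0$. The inequality $A_k^2 - B_k A_k - c_k \le 0$ forces $A_k$ to lie below the larger root, namely $A_k \le \tfrac{1}{2}\big(B_k + \sqrt{B_k^2 + 4 c_k}\big)$. To match the stated bound I would then invoke the elementary estimate $\sqrt{B_k^2 + 4 c_k} \le B_k + 2\sqrt{c_k}$, which holds because squaring both nonnegative sides reduces it to the true inequality $0 \le 4 B_k \sqrt{c_k}$. Substituting gives $A_k \le \tfrac{1}{2}\big(2 B_k + 2\sqrt{c_k}\big) = \sqrt{c_k} + B_k$, which is exactly the claimed conclusion for the fixed $k$; since $k$ was arbitrary, it holds for every $k\in\mathbb{N}$.

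I do not expect a genuine obstacle here, as the argument is entirely elementary. The only two points demanding care are verifying that the index $j+1$ never exceeds $k$, so that the uniform replacement $a_{j+1}\le A_k$ is legitimate, and correctly handling the quadratic-to-linear passage via the sharp square-root estimate rather than a cruder bound that would spoil the constant $\sqrt{c_k}+B_k$.
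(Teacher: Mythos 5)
Your proof is correct and follows essentially the same route as the paper's: introduce the running maximum $A_k$, bound each $a_m^2$ uniformly by $c_k + A_k\sum_{j=1}^{k-1}b_j$ using monotonicity of $(c_k)$ and positivity of $(b_k)$, and then solve the resulting quadratic inequality in $A_k$. The only difference is that you spell out the final root estimate $\sqrt{B_k^2+4c_k}\leq B_k + 2\sqrt{c_k}$ explicitly, a step the paper leaves implicit (and where, incidentally, the paper's displayed bound contains a typo, writing $\sqrt{S_k+4c_k}$ for $\sqrt{S_k^2+4c_k}$).
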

\begin{proof}
Take $\kin$, and let $A_k:=\max\limits_{m=1,...,k} a_m$.
Then, for all $1 \leq m \leq k$:
\begin{equation}
    a_m^2 \leq  c_m + \sum\limits_{j=1}^{m-1} b_j a_{j+1}
    \leq c_k + A_k \sum\limits_{j=1}^{k-1} b_j,
\end{equation}
because $c_k$ is increasing and $b_j$ is positive.
Therefore $A_k^2 \leq c_k + A_k \sum\limits_{j=1}^{k-1} b_j$.
Define $S_k= \sum\limits_{j=1}^{k-1} b_j$. By computing and bounding the solutions of the previous inequality we conclude that
\[
    A_k \leq \frac{S_k + \sqrt{S_k + 4c_k} }{2} \leq S_k + \sqrt{c_k}.
\]

\end{proof}
We recall a useful characterisation of the elements in the $\varepsilon$-subdifferential of a function in $\Gamma_0(\mathcal{H})$. This property is used to  prove Proposition \ref{prop:KL_approx}, see also \cite{ZalinescuBook2002}.

\begin{lemma}[Theorem 2.4.2, \cite{ZalinescuBook2002}]  \label{lemma:characterisation_eps}
Let $\mathcal{H}$ be an Hilbert space, let $f\in\Gamma_0(\mathcal{H})$, let $(x,u)\in\mathcal{H}^2$, and let $\varepsilon>0$. 
Then, the following statements are equivalent:
\begin{enumerate}
\item[i)] $u\in\partial_\varepsilon f(x)$;
\item[ii)] The following $\varepsilon$-Young-Fenchel inequality holds: 
\begin{equation}  
f(x) + f^*(u) \leq \langle u, x \rangle +\varepsilon;
\end{equation}
\item[iii)] $x\in\partial_\varepsilon f^*(u)$.
\end{enumerate}
\end{lemma}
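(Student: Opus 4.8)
The plan is to establish the three-way equivalence by exploiting the symmetry between $f$ and its conjugate $f^*$, glued together by the Fenchel--Moreau biconjugation identity. The guiding observation is that each of the two subdifferential conditions, (i) and (iii), collapses to the single scalar inequality (ii) as soon as one takes a supremum in the defining inequality of the $\varepsilon$-subdifferential. So rather than proving a cycle of three implications by separate computations, I would prove (i) $\Leftrightarrow$ (ii) once and then obtain (iii) $\Leftrightarrow$ (ii) for free by applying the same reasoning to $f^*$ in place of $f$.

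First I would prove (i) $\Leftrightarrow$ (ii). Starting from the definition \eqref{eq:eps_subdifferential}, the condition $u \in \partial_\varepsilon f(x)$ reads $f(x') \geq f(x) + \langle u, x'-x\rangle - \varepsilon$ for every $x' \in \mathcal{H}$. Rearranging, this is equivalent to $\langle u, x'\rangle - f(x') \leq \langle u, x\rangle - f(x) + \varepsilon$ holding for all $x'$. Taking the supremum over $x'$ on the left-hand side produces exactly $f^*(u)$, so the entire family of inequalities is equivalent to $f^*(u) \leq \langle u, x\rangle - f(x) + \varepsilon$, which is precisely (ii). I would emphasise that this step uses nothing but the definition of the Fenchel conjugate: no convexity or lower semicontinuity is invoked here.

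Next I would obtain (iii) $\Leftrightarrow$ (ii) by running the identical argument with the roles of $f$ and $f^*$ interchanged. Applying the previous step to the function $f^*$, with the pair $(u,x)$ in place of $(x,u)$, shows that $x \in \partial_\varepsilon f^*(u)$ is equivalent to $f^{**}(x) + f^*(u) \leq \langle x, u\rangle + \varepsilon$. This is the one point where the hypothesis $f \in \Gamma_0(\mathcal{H})$ is genuinely needed: by the Fenchel--Moreau theorem, $f^{**} = f$, so after using the symmetry of the inner product this inequality is exactly (ii). Combining the two equivalences closes the loop and yields the statement.

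I do not expect a real obstacle here; the argument is two applications of the definition of the conjugate, joined by biconjugation, and the remaining manipulations are elementary. The only substantive ingredient is the identity $f^{**} = f$, which holds precisely because $f$ is proper, convex, and lower semicontinuous, and which is what allows the conjugate-side inequality produced by (iii) to be identified with (ii). If anything requires care, it is simply recording that this biconjugation is legitimate under the standing assumption $f \in \Gamma_0(\mathcal{H})$ (equivalently, that $f^*$ again belongs to $\Gamma_0(\mathcal{H})$, as already noted in the preliminaries).
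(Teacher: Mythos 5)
Your proof is correct. Note that the paper does not prove this lemma at all---it is imported verbatim from the cited reference (Z\u{a}linescu, Theorem 2.4.2)---and your argument is exactly the standard one given there: the equivalence (i) $\Leftrightarrow$ (ii) is the definition of the conjugate (taking the supremum over $x'$ in the $\varepsilon$-subgradient inequality), and (iii) $\Leftrightarrow$ (ii) follows by applying the same computation to $f^*$ and invoking Fenchel--Moreau biconjugation $f^{**}=f$, which is precisely where the hypothesis $f\in\Gamma_0(\mathcal{H})$ is needed, as you correctly flag.
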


\bibliographystyle{plain}
\bibliography{biblioAcc3D}
\end{document}